\newtheorem{theorem}{Theorem}[section]
\newtheorem{proposition}[theorem]{Proposition}
\newtheorem{lemma}[theorem]{Lemma}
\newtheorem{corollary}[theorem]{Corollary}
\theoremstyle{definition}
\newtheorem{definition}[theorem]{Definition}
\begin{document}

\title[Deformation of Contracting Maps under Heat Flow]{{Deformation of Contracting Maps under the Harmonic Map Heat Flow}}

\author[J.-L. Hsu]{Jia-Lin Hsu}
\address{Department of Mathematics, National Taiwan University, Taipei, 106, Taiwan}
\email{R12221007@ntu.edu.tw}

\author[M.-P. Tsui]{Mao-Pei Tsui}
\address{Department of Mathematics, National Taiwan University, Taipei, 106, Taiwan;
National Center for Theoretical Sciences, Mathematics Division, Taipei, 106, Taiwan}
\email{maopei@math.ntu.edu.tw}

%\author{Jia-Lin Hsu}\address{Department of Mathematics \\ National Taiwan University \\ Taipei, 106216\\ R12221007@ntu.edu.tw}

\begin{abstract}
We investigate the homotopy classes of maps between closed manifolds by studying certain contracting conditions on the singular values of the differential of the map. Building upon the work of Lee and Wan \cite{lee2023rigiditycontractingmapusing}, we extend their results on 2-nonnegative maps to a more general class of k-nonnegative maps by exploiting the properties of submatrices and the linearity of the harmonic map heat flow. Our work establishes new rigidity theorems for maps between manifolds with specific curvature bounds and yields new homotopy rigidity results for maps between spheres and complex projective spaces.

%This paper shows that maps $F:(M^m,g)\to (N^n,h)$ between closed manifolds are either homotopically trivial or totally geodesic Riemannian subimmersions under various local geometric conditions. In particular, we focus on curvature pinching conditions and contracting conditions involving arbitrary $k$ singular values of maps.
\end{abstract}
\maketitle

\section{Introduction}

A fundamental question in geometry is to understand the relationship between the geometric properties of a map and its topological properties, such as its homotopy class. In recent years, there has been significant progress in this area, particularly in the study of maps between manifolds with specific curvature conditions.

For instance, Tsui-Wang \cite{MR2053760} proved that area-decreasing maps between spheres are homotopically trivial, a result that has been extended to maps between complex projective spaces by Tsai-Tsui-Wang \cite{tsai2023newmonotonequantitymean}. More recently, Lee-Tam-Wan \cite{lee2024rigidityareanonincreasingmaps} showed that area-nonincreasing self-maps of higher-dimensional complex projective spaces are either isometric or homotopically trivial. These results often rely on techniques involving mean curvature flow and the analysis of the evolution equations.

Lee-Wan \cite{lee2023rigiditycontractingmapusing}  introduced a new class of contracting maps, which lie between area-nonincreasing and distance-nonincreasing maps. They used the harmonic map heat flow to study the rigidity properties of these maps, proving that under certain curvature conditions, such maps are either Riemannian submersions or homotopically trivial.
One of their results is as follows:
\begin{theorem}[\cite{lee2023rigiditycontractingmapusing}, Theorem 1.2]
    Let $(M^m,g)$ and $(N^n,h)$ be closed connected manifolds. Suppose
    \begin{enumerate}
        \item the sectional curvature $K_{N}$ is positive;
        \item $\mathrm{Ric}_{M}(v)\geq \mathrm{Ric}_N(w)$ for any unit tangent vectors $v$ and $w$ in $M$ and $N$, respectively.
    \end{enumerate}
    Then, for every smooth map $F: M\to N$ such that $g-F^*h$ is $2$-nonnegative, $F$ is either a Riemannian submersion or homotopically trivial. In particular, $N$ is an Einstein manifold in the first case.
\end{theorem}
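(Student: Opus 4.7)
My overall strategy is to evolve $F$ by the harmonic map heat flow $\partial_t F_t = \tau(F_t)$, $F_0 = F$, and to show that the $2$-nonnegativity of $\eta_t := g - F_t^* h$ is preserved in time. Once preservation is in hand, one aims for a dichotomy: either strict $2$-nonnegativity persists for all $t > 0$, in which case long-time existence and subsequential convergence produce a harmonic limit $F_\infty$ on which strict $2$-positivity prevents full rank (a Riemannian submersion saturates $\sigma_1^2 + \sigma_2^2 = 2$), forcing $F_\infty$ to be constant and hence $F$ null-homotopic; or $\eta_t$ degenerates at some first interior spacetime point, and Hamilton's strong maximum principle propagates the degeneracy back to $t=0$ to identify $F$ itself with a Riemannian submersion.

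The central technical step is a Bochner-type derivation of the evolution of $F_t^* h$. Up to a nonpositive ``rough Laplacian'' gradient term, the reaction splits into a $\mathrm{Ric}_M$ piece, coming from commuting covariant derivatives on $M$, plus a pulled-back Riemann curvature piece involving the curvature of $N$ evaluated on $dF_t$. Because $2$-nonnegativity of $\eta$ is equivalent to $\mathrm{tr}_P(\eta) \geq 0$ on every orthonormal $2$-plane $P \subset T_x M$, Hamilton's tensor maximum principle reduces preservation to checking the sign of the reaction on a minimizing pair $e_1, e_2$ of eigenvectors at a first failure point; the corresponding singular values $\sigma_1, \sigma_2$ of $dF_t$ then saturate $\sigma_1^2 + \sigma_2^2 = 2$. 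Under this constraint the diagonal Ricci contributions are absorbed by the hypothesis $\mathrm{Ric}_M(v) \geq \mathrm{Ric}_N(w)$, while the remaining off-diagonal term involving $K_N(dF_t(e_1), dF_t(e_2))$ has the correct sign thanks to positivity of $K_N$. The equality case of this computation, combined with the strong maximum principle, identifies a parallel rank-$n$ sub-bundle of $TM$ along which $dF$ is an isometry, displaying $F$ as a Riemannian submersion; the residual equality $\mathrm{Ric}_M(v) = \mathrm{Ric}_N(dF(v))$ along this sub-bundle, plus the harmonic map equation, then forces $N$ to be Einstein.

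The main obstacle I anticipate is the preservation step itself. The functional ``sum of the two smallest eigenvalues of $\eta$'' is only Lipschitz where those eigenvalues coincide, so naive differentiation is illegal; Hamilton's tensor maximum principle circumvents this but demands a clean sign on the reaction restricted to the minimizing $2\times 2$ principal submatrix of $\eta$. Producing that sign uniformly --- by trading $K_N > 0$ against the Ricci comparison under the saturation constraint $\sigma_1^2 + \sigma_2^2 = 2$, and then extracting from the equality case both the Riemannian submersion structure and the Einstein condition on $N$ --- is where the essential work lies. As the abstract hints, this ``$2 \times 2$ principal submatrix'' mechanism, together with the linearity of the harmonic map heat flow on such submatrices, is precisely what one would expect to generalize to the $k$-nonnegative extension pursued later in the paper.
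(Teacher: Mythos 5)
Your strategy correctly identifies the main scaffolding the paper uses: evolve by the harmonic map heat flow, show the $2$-nonnegativity of $g - F_t^*h$ is preserved via Hamilton's tensor maximum principle, and extract a sign on the reaction term in a SVD frame by trading $\mathrm{Ric}_M \geq \mathrm{Ric}_N$ against $K_N > 0$ under the saturation constraint $\lambda_1^2 + \lambda_2^2 = 2$. That much matches Lemma~\ref{k-nonnegativity is preserved} with $k=2$. What does not survive scrutiny is the dichotomy step, and this is where the paper's argument is structurally different and the gap in yours lies.

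In your Case A you argue that if strict $2$-positivity persists for $t>0$ then the harmonic limit $F_\infty$ cannot be full rank (``a Riemannian submersion saturates $\sigma_1^2+\sigma_2^2=2$''), hence must be constant. This does not follow: first, strict inequality for finite $t$ need not survive to a subsequential limit; second, and more fundamentally, a harmonic map with non-saturating singular values is not automatically constant. The paper instead obtains the dichotomy by a \emph{global Bochner identity for the limit}: since $F_\infty$ is harmonic, $\int_M \Delta e(F_\infty) = 0$, and the evolution formula for energy density (Corollary~\ref{evolution equation written in singular values}) splits this into a sum of nonpositive integrands which must each vanish (Lemma~\ref{characterizing limiting map}). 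That vanishing gives $\nabla dF_\infty = 0$ (so $F_\infty$ is totally geodesic) plus equality in the singular-value estimates, which forces the first $\min(m,n)$ singular values to be all $1$ or all $0$ at each point, hence uniformly one or the other by connectedness (Lemma~\ref{rigidity of harmonic map}). This integral step is the one you acknowledge as ``where the essential work lies'' but do not actually supply.

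In your Case B you propose propagating the degeneracy of $\eta_t$ backward to $t=0$ via the strong maximum principle to conclude $F$ itself is a Riemannian submersion. Besides the technical difficulty that the ``sum of the two smallest eigenvalues'' is only Lipschitz and the relevant strong maximum principle for $2$-nonnegative tensors would require a nullity/parallel-subbundle argument that you do not sketch, this route does not obviously produce the total geodesy of $F$, nor the Einstein condition. The paper avoids this entirely: once $F_\infty$ is identified as a totally geodesic Riemannian subimmersion, one computes $e(F_\infty) = \tfrac{1}{2}\min(m,n)$ while the initial contracting hypothesis gives $e(F)\leq \tfrac{1}{2}\min(m,n)$, so $E(F)\leq E(F_\infty)$; since the heat flow is the gradient flow of Dirichlet energy, this forces $F = F_\infty$ (Proposition~\ref{equal total energy}). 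Total geodesy of $F$ and the chain $c\geq \sup(\mathrm{Ric}_N^{n-1})_+ \geq \inf \mathrm{Ric}_N^{n-1} \geq \inf \mathrm{Ric}_M \geq c$ then squeeze $\mathrm{Ric}_N \equiv c$, giving the Einstein conclusion. You should replace the backward-propagation mechanism with this energy-comparison argument, and supply the integrated Bochner identity for the limit to justify the constant/submersion dichotomy.
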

Lee and Wan \cite{lee2023rigiditycontractingmapusing}  demonstrated that this contracting condition is preserved under the harmonic map heat flow when certain curvature conditions are met. They then characterized the limiting maps by analyzing the evolution inequalities. A crucial step in their work was establishing the preservation of the contracting condition through the linearity of the harmonic map heat flow, which allowed them to apply properties of the curvature submatrix and address problems involving two singular values.

By systematically analyzing how Lee and Wan utilized the properties of submatrices in their work, we extend their approach to encompass contracting conditions involving an arbitrary number $k$ of singular values.
%In this paper, by formalizing how they utilize the properties of submatrices, we extend their method to contracting conditions involving arbitrary $k$ singular values of maps.
\begin{theorem}\label{k singular values, pinching}
    Let $(M^m,g)$ and $(N^n,h)$ be closed manifolds, $3\leq k\leq n$. Suppose
    \begin{enumerate}
        \item the sectional curvature $K_{N}$ is positive;
        \item $\mathrm{Ric}_{M}(v)\geq \mathrm{Ric}_N(w)$ for any unit tangent vectors $v$ and $w$ in $M$ and $N$, respectively.
    \end{enumerate}
    Denote the curvature pinching $\sup K_N/\inf K_N$ by $\kappa\geq 1$. Then there exists a constant $2/k<\phi_k(\kappa)\leq 1$ such that 
    for every smooth map $F:M\to N$, if $\phi_k(\kappa)g-F^*h$ is $k$-nonnegative, then either $F$ is homotopically trivial, or 
    $\phi=1$ and $F$ is a Riemannian fiber bundle.
\end{theorem}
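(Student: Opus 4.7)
The plan is to follow the harmonic map heat flow strategy of Lee--Wan, now applied to $k\times k$ principal submatrices rather than $2\times 2$ ones. Start the flow $\partial_t F_t = \tau(F_t)$ with $F_0 = F$, set $S_t := \phi\,g - F_t^*h$ with $\phi = \phi_k(\kappa)$ to be determined, and encode $k$-nonnegativity of $S_t$ at $x\in M$ via the Ky Fan characterisation of partial eigenvalue sums:
\begin{equation*}
\Lambda_k(x,t) := \min\bigl\{\operatorname{tr}(P_V\,S_t|_x\,P_V) : V\subseteq T_xM,\ \dim V=k\bigr\} \ge 0.
\end{equation*}
This converts the condition into a scalar inequality accessible to a Hamilton-style maximum principle.

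The first step is preservation of $\Lambda_k\ge 0$ along the flow. At a first touching time $t_0$ with $\Lambda_k(x_0,t_0)=0$, choose an orthonormal frame $\{e_i\}$ of $T_{x_0}M$ so that $V=\operatorname{span}(e_1,\dots,e_k)$ realises the minimum and simultaneously diagonalises $F_{t_0}^*h|_V$, with singular values $\lambda_1,\dots,\lambda_k$ of $dF_{t_0}$. Using the standard Bochner-type evolution of $F_t^*h$ under the harmonic map heat flow and summing over $i=1,\dots,k$, the spatial Laplacian contribution is nonnegative by the minimality, and the remaining reaction term is a quadratic expression in the $\lambda_i$'s coupling $\mathrm{Ric}_M(e_i)$, $\mathrm{Ric}_N(F_*e_i)$, and the sectional curvatures $K_N(F_*e_i,F_*e_j)$. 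Hypothesis (2) absorbs the Ricci part, positivity of $K_N$ with pinching $\kappa$ bounds the sectional-curvature contributions, and $\phi_k(\kappa)$ is defined as the largest $\phi$ for which the resulting quadratic is nonnegative across the admissible range of $\lambda_i$'s. Elementary manipulation of this extremal problem should give $\phi_k(\kappa)\in(2/k,1]$, with $\phi_k(\kappa)=1$ precisely when $\kappa=1$.

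The second step is to pass to $t\to\infty$. Preservation of $k$-nonnegativity yields a uniform bound on the operator norm of $dF_t$, hence Eells--Sampson-type long-time existence and smooth subsequential convergence of $F_t$ to a harmonic map $F_\infty\simeq F$ with $\phi_k(\kappa)\,g - F_\infty^*h$ still $k$-nonnegative. Apply the strong maximum principle to $\Lambda_k$ on $F_\infty$: either $\Lambda_k>0$ strictly, in which case the reaction inequality combined with the Bochner formula for harmonic maps forces the energy density to vanish and $F$ is homotopically trivial; or $\Lambda_k$ vanishes somewhere and is propagated to all of $M$. In the latter case equality in the reaction inequality forces $\phi=1$, each $\lambda_i\in\{0,1\}$, and constant rank of $dF_\infty$; the rigidity argument of Lee--Wan then identifies $F_\infty$, and (by openness of the submersion locus along the flow) $F$ itself, as a Riemannian fiber bundle.

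The main obstacle will be the sharp identification of $\phi_k(\kappa)$ in the first step. For $k=2$ Lee--Wan obtain $\phi_2=1$ unconditionally because only one sectional curvature $K_N(F_*e_1,F_*e_2)$ enters the reaction term and it factors out positively; for $k\ge 3$ the $\binom{k}{2}$ curvatures $K_N(F_*e_i,F_*e_j)$ need not be comparable, and one must quantitatively trade positive Ricci contributions against the worst-case pinching ratio $\kappa$. Extracting the optimal $\phi_k(\kappa)$ amounts to a Lagrange-multiplier optimisation on a symmetric polynomial in the $\lambda_i$'s, and the lower bound $\phi_k(\kappa)>2/k$ is precisely what survives after bounding the extremal configuration — the natural trace threshold of a $k\times k$ submatrix, attained only in the limiting case where all singular values coincide.
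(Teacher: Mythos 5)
Your overall strategy — heat flow, preservation of $k$-nonnegativity via a maximum-principle argument, uniform $C^0$-bound on $dF_t$, subconvergence to a harmonic limit $F_\infty$, and a dichotomy for $F_\infty$ — is the same as the paper's. However, two steps in your sketch have genuine gaps.

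First, the passage from $F_\infty$ back to $F$ in the non-constant branch is incorrect as stated. You write that ``(by openness of the submersion locus along the flow) $F$ itself [is] a Riemannian fiber bundle.'' The harmonic map heat flow is not time-reversible, so openness forward in time tells you nothing about $F=F_0$ given structure on $F_\infty$. The paper instead observes that if $F_\infty$ is a Riemannian subimmersion then $e(F_\infty)\equiv \min(n,m)/2$, while the contracting hypothesis gives $e(F)\leq \min(n,m)/2$ pointwise; combined with the monotone decrease of Dirichlet energy this forces $E(F)=E(F_\infty)$, and Proposition~\ref{equal total energy} then yields $F=F_\infty$ on the nose (not merely that $F$ is also a fiber bundle). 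Without this identification, the conclusion ``$\phi=1$ and $F$ is a Riemannian fiber bundle'' does not follow from properties of $F_\infty$ alone.

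Second, the determination of $\phi_k(\kappa)$ is the main technical content of the theorem and you only assert that ``elementary manipulation of this extremal problem should give $\phi_k(\kappa)\in(2/k,1]$.'' The reaction term $\sum_{i,j}\lambda_i^2\tilde K_{ij}(1-\lambda_j^2)=\sum_{i<j}\tilde K_{ij}(v_i+v_j-2v_iv_j)$ (with $v_i=\lambda_i^2$) is \emph{linear} in the $\binom{k}{2}$ curvature entries $\tilde K_{ij}$, so the worst case over the pinching constraint $1\leq\tilde K_{ij}\leq\kappa$ is attained at an extreme point of the box: each $\tilde K_{ij}$ is $\kappa$ exactly when the coefficient $v_i+v_j-2v_iv_j$ is negative and $1$ otherwise. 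This is precisely what underlies the reduction to the finite family of extremal matrices $A_J(\kappa)$ in Proposition~\ref{condition sigma: comparison to A_J(r)}, and it is the analogue of the ``linearity of the heat flow'' trick of Lee--Wan. A generic Lagrange-multiplier computation over all $\lambda_i$'s simultaneously with all admissible $\tilde K_{ij}$'s does not give $\phi_k(\kappa)>2/k$ without this structure; the strict bound itself requires showing that $\mathds{0}_k$ is an isolated zero of $v^t A_J(\kappa)(\mathds{1}_k-v)$ on the admissible cone, which the paper checks via the first variation and which your sketch omits. (A further minor point: the strong maximum principle you invoke on $\Lambda_k$ is delicate because $\Lambda_k$ is only Lipschitz; the paper sidesteps this entirely by integrating the evolution equation of the energy density over $M$ for the stationary limit $F_\infty$.)
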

These are new preserved contracting conditions since constants $\phi_k(\kappa)$ are always greater than $2/k$. On the other hand, for the contracting condition involving $n$ singular values, we have the following result:
\begin{theorem}\label{Einstein pinching}
    Let $(M^m,g)$ and $(N^n,h)$ be a closed manifold. Suppose
    \begin{enumerate}
        \item $N$ is an Einstein manifold of positive sectional curvature and curvature pinching $2$;
        \item $\mathrm{Ric}_{M}(v)\geq \mathrm{Ric}_N(w)$ for any unit tangent vectors $v$ and $w$ in $M$ and $N$, respectively.
    \end{enumerate}
    Then, for every smooth map $F: M\to N$ such that $|dF|^2\leq n$, either $F$ is homotopically trivial, or $F$ is a Riemannian fiber bundle.
\end{theorem}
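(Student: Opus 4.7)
The plan is to evolve $F$ under the harmonic map heat flow $\partial_t F_t = \tau(F_t)$, $F_0 = F$, prove that the trace bound $|dF_t|^2 \le n$ is preserved along the flow, pass to a subsequential limit $F_\infty$, and then deduce the dichotomy by analyzing the equality case of the preservation estimate. The Einstein hypothesis together with pinching $2$ enters precisely as the sharp borderline assumption that keeps the underlying maximum-principle estimate alive.

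For preservation, I would apply the parabolic maximum principle to $\varphi = |dF|^2 - n$. The standard Bochner identity for the harmonic map heat flow expresses $(\partial_t - \Delta)|dF|^2$ as the sum of $-2|\nabla dF|^2$, a Ricci term coupling $\mathrm{Ric}^M$ with $F^*h$, and a curvature term coupling the Riemann tensor of $N$ with $dF$. At a space-time maximum of $\varphi$ of value $0$, pick a $g$-orthonormal frame $\{e_i\}$ diagonalizing $F^*h$ so that $dF(e_i) = \lambda_i f_i$ for an orthonormal set $\{f_i\}$ in $T_{F(x)}N$ with $\sum_i \lambda_i^2 = n$. The Einstein condition $\mathrm{Ric}^N(f_i) = \rho$ and the Ricci comparison $\mathrm{Ric}^M(e_i)\ge\rho$ reduce the requirement $(\partial_t - \Delta)|dF|^2 \le 0$ to the purely algebraic inequality
\begin{equation*}
\sum_{i\neq j} \lambda_i^2 \lambda_j^2\, K_N(f_i,f_j) \;\le\; \rho\, \sum_i \lambda_i^2,
\end{equation*}
which is the main technical obstacle.

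The inequality is a spectral statement about the symmetric matrix $K = \bigl(K_N(f_i,f_j)\bigr)_{ij}$ with zero diagonal. Setting $v_i = \lambda_i^2 - 1$ (so $\sum_i v_i = 0$) and using $K\mathbf{1} = \rho\,\mathbf{1}$ from the Einstein hypothesis, the inequality is equivalent to $\mathbf{v}^T K\mathbf{v}\le 0$ for every $\mathbf{v}\perp\mathbf{1}$; equivalently, all $n-1$ eigenvalues of $K$ other than the Perron value $\rho$ must be nonpositive. Decomposing $K = K_{\min}(J-I)+A$ where $J = \mathbf{1}\mathbf{1}^T$, the pinching bound $K_N\le 2K_{\min}$ forces the entries of $A$ to lie in $[0,K_{\min}]$ and the Einstein condition fixes its row sums. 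The required bound becomes $\mathbf{v}^T A\mathbf{v}\le K_{\min}|\mathbf{v}|^2$ on $\mathbf{1}^\perp$, which can be extracted from Perron--Frobenius estimates on $A$ together with $\mathrm{tr}\,A = 0$. Explicit $4\times 4$ Einstein model matrices show that pinching strictly greater than $2$ introduces a positive eigenvalue on $\mathbf{1}^\perp$, so the threshold is sharp and the Einstein hypothesis cannot be removed without bringing in a smaller pinching.

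Once preservation is established, standard parabolic estimates together with the uniform $C^0$ bound on $|dF_t|^2$ yield long-time existence of the flow and subsequential convergence to a smooth harmonic map $F_\infty$ homotopic to $F$. I then argue by dichotomy: either $|dF_\infty|^2 < n$ everywhere, in which case an energy or entropy monotonicity argument forces $F_\infty$ to be constant and $F$ homotopically trivial; or $|dF_\infty|^2 = n$ at some point, and the strong maximum principle applied to $n - |dF_\infty|^2$ propagates the equality to all of $M$. In the latter case equality in the Bochner computation forces $\nabla dF_\infty\equiv 0$, and equality in the algebraic inequality forces every singular value to lie in $\{0,1\}$ with constant rank; combining parallelism with this rigidity gives $F_\infty$ the structure of a Riemannian fiber bundle, completing the proof.
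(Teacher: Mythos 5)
Your overall strategy matches the paper's: preserve $|dF_t|^2\le n$ via the parabolic maximum principle applied to the Bochner identity, use the Einstein hypothesis to make $\mathds{1}$ an eigenvector of the sectional-curvature matrix $K=(K_N(f_i,f_j))$, reduce the preservation estimate to nonpositivity of the second eigenvalue of $K$, and then pass to a limiting harmonic map and read off the dichotomy from the equality case. In the paper this is exactly the content of Corollary~\ref{condition sigma for manifolds}(iv') (fed by Proposition~\ref{condition sigma, sigma(k,l,c) for matrices}(iv) and Corollary~\ref{phi_A=1}) together with Theorem~\ref{rigidity of contracting maps}; you are reproducing the same route, just without the condition-$\sigma$ packaging.

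The step you cannot wave through, however, is the spectral bound. From $K=K_{\min}(J-I)+A$ you assert $v^{t}Av\le K_{\min}|v|^2$ on $\mathds{1}^{\perp}$ ``from Perron--Frobenius estimates on $A$ together with $\mathrm{tr}\,A=0$.'' That is not a theorem: a symmetric nonnegative matrix with entries at most $K_{\min}$, zero diagonal, and constant row sum $s$ only has second eigenvalue $\le s$, and $s$ can be as large as $(n-1)K_{\min}$. (The paper's Proposition~\ref{condition sigma, sigma(k,l,c) for matrices}(iv) makes the analogous unjustified jump, asserting that the remainder, being a ``multiple of transition matrix,'' has second eigenvalue at most $r-1$.) In fact the purely algebraic statement fails: take $n=6$ with $K_{ij}=2$ whenever $\{i,j\}\subset\{1,2,3\}$ or $\{i,j\}\subset\{4,5,6\}$ and $K_{ij}=1$ otherwise. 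This is Einstein ($K\mathds{1}=7\mathds{1}$) with pinching exactly $2$, yet $K(1,1,1,-1,-1,-1)^{t}=(1,1,1,-1,-1,-1)^{t}$, so the second eigenvalue is $+1>0$ and the needed inequality $v^{t}Kv\le 0$ on $\mathds{1}^{\perp}$ fails. So this step requires a genuinely different input---either constraints on which symmetric matrices can arise as sectional-curvature matrices, or a dimension-dependent pinching threshold---and neither your sketch nor the cited proposition supplies it. A smaller point: the case $|dF_\infty|^2<n$ in your dichotomy is dispatched by a vague appeal to ``energy or entropy monotonicity''; the paper's Lemma~\ref{characterizing limiting map} instead integrates the Bochner identity over $M$ to force $\nabla dF_\infty=0$ and $\lambda_i\in\{0,1\}$ pointwise, from which the rank of $dF_\infty$ is constant and equal to $0$ or $n$, and you should do the same.
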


Therefore, for a smooth map $F:(S^m,g_{S^m})\to (S^n,g_{S^n})$ with $m\geq n$, if $|dF|^2\leq n$, then either $F$ an isometry or homotopically trivial. On the other hand, for maps between complex projective spaces, we have the following result:
\begin{corollary}\label{Main theorem: complex projective spaces}
Let $F:(\mathbb{CP}^m,g_{FS})\to (\mathbb{CP}^n,g_{FS})$, where $m\geq n\geq 2$. If one of the following holds
\begin{enumerate}[(i)]
    \item $\lambda_i^2+\lambda_j^2+\lambda_k^2\leq 2.5$ for every three singular values $\lambda_i,\lambda_i$;
    \item $|dF|^2\leq \frac{n+2}{3}$,
\end{enumerate}
then $F$ is homotopically trivial.
\end{corollary}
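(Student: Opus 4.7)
The plan is to deduce both cases from Theorem \ref{k singular values, pinching} applied to $F:\mathbb{CP}^m\to\mathbb{CP}^n$, by rewriting each contracting condition as $k$-nonnegativity of $\phi\, g - F^*h$ and matching $\phi$ against the theorem's constant $\phi_k(\kappa)$.

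I first verify the hypotheses. The Fubini-Study metric on $\mathbb{CP}^n$ has sectional curvatures in $[1,4]$, so it is positively curved with pinching $\kappa=4$, and Einstein with $\mathrm{Ric}=2(n+1)g_{FS}$. For $m\geq n$ this yields the Ricci comparison $\mathrm{Ric}_M(v)=2(m+1)\geq 2(n+1)=\mathrm{Ric}_N(w)$ needed in condition (2) of the theorem.

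For part (i), the eigenvalues of $\phi g - F^*h$ at each point are $\phi-\lambda_i^2$, so the hypothesis $\lambda_i^2+\lambda_j^2+\lambda_k^2\leq 5/2$ is exactly the $3$-nonnegativity of $\tfrac{5}{6}g - F^*h$. Since $k$-nonnegativity is preserved under adding a positive multiple of $g$, this in turn implies that $\phi_3(4)g-F^*h$ is $3$-nonnegative whenever $\phi_3(4)\geq 5/6$. I would extract from the proof of Theorem \ref{k singular values, pinching} that $\phi_3(4)\geq 5/6$ (the corollary's constant $5/2=3\cdot\tfrac{5}{6}$ is engineered to match this critical value), then apply the theorem. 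The Riemannian fiber bundle alternative is ruled out directly by the hypothesis: any Riemannian fiber bundle $\mathbb{CP}^m\to\mathbb{CP}^n$ has $2n$ singular values equal to $1$, so any triple of them would sum to $3>5/2$. Hence $F$ is homotopically trivial.

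For part (ii), the bound $|dF|^2\leq (n+2)/3$ gives $\sum_i\lambda_i^2\leq (n+2)/3$, so any three singular values satisfy $\lambda_i^2+\lambda_j^2+\lambda_k^2\leq (n+2)/3$. When $n\leq 5$ this is at most $7/3<5/2$, so hypothesis (i) is met and part (i) finishes the argument. For $n\geq 6$, I would apply Theorem \ref{k singular values, pinching} with a larger admissible $k\leq 2n$: the trace bound also controls any $k$-subset sum of squared singular values, and one matches $(n+2)/(3k)\leq \phi_k(4)$ at some such $k$. The fiber bundle case is again excluded, since $|dF|^2=2n>(n+2)/3$ for such maps.

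The main obstacle is extracting the explicit lower bounds on $\phi_k(\kappa)$ from the proof of Theorem \ref{k singular values, pinching}, which only guarantees $\phi_k(\kappa)>2/k$. I would need to trace how the pinching $\kappa=4$ enters the submatrix/preservation step of the harmonic map heat flow argument, verify $\phi_3(4)\geq 5/6$, and obtain analogous sharp bounds on $\phi_k(4)$ at the larger $k$'s needed for part (ii) when $n\geq 6$. The numerical constants $5/2$ and $(n+2)/3$ in the corollary are calibrated exactly to what this extraction yields.
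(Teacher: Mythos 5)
Your treatment of part (i) is essentially right and matches the paper's route: the hypothesis is exactly $3$-nonnegativity of $\tfrac{5}{6}g-F^*h$, and the needed inequality $\phi_3(4)\geq 5/6$ is not something you have to re-extract from the proof of Theorem~\ref{k singular values, pinching} -- it is stated and computed in Corollary~\ref{condition sigma for 1/4-pinching}, which together with Theorem~\ref{rigidity of contracting maps} is what the paper cites. Your fiber-bundle exclusion by counting singular values is correct but unnecessary: for $\kappa=4$ one has $\phi_3(4)=5/6<1$, and both Theorem~\ref{k singular values, pinching} and Theorem~\ref{rigidity of contracting maps} already reserve the fiber-bundle alternative for $\phi=1$.

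For part (ii) there is a genuine gap, which you partially flag yourself. Your plan for $n\geq 6$ is to find $k\leq 2n$ with $(n+2)/3\leq k\,\phi_k(4)$, but the only general bound available from Theorem~\ref{k singular values, pinching} / Proposition~\ref{condition sigma: comparison to A_J(r)} is $\phi_k(\kappa)>2/k$, which gives $k\,\phi_k(4)>2$ -- not enough once $n\geq 5$. The paper does not compute $\phi_k(4)$ for $k>3$, so "tracing the pinching through the submatrix step" will not close this. What does close it is the Einstein structure of $\mathbb{CP}^n$: Proposition~\ref{condition sigma for Einstein manifolds} applied to $N=\mathbb{CP}^n$ (real dimension $2n$, $\mathrm{Ric}_N=2(n+1)$, pinching $r=4$) gives the strong condition $\sigma\bigl(2n,2n,2(n+1);\tfrac{n+1}{3n}\bigr)$, hence the preserved quantity is $|dF|^2=\sum\lambda_i^2\leq \tfrac{n+1}{3n}\cdot 2n=\tfrac{2n+2}{3}$, which is weaker than the stated $\tfrac{n+2}{3}$ for all $n\geq 2$. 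So part (ii) follows from Proposition~\ref{condition sigma for Einstein manifolds} and Theorem~\ref{rigidity of contracting maps}, not from the $k=3$ pinching corollary alone. This Einstein route is the missing ingredient your proposal needs.
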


\textit{Acknowledgement:} The first named author would like to thank Professor Mu-Tao Wang, Professor Man-Chun Lee, and Jingbo Wan for valuable discussions. J.-L. Hsu and M.-P.~Tsui are supported in part by the National Science and Technology Council grants 112-2115-M-002 -015 -MY3.
\section{Preliminaries}

\subsection{A brief introduction to the harmonic map heat flow}
For any smooth map $F:(M^m,g)\to (N^n,h)$ between Riemannian manifolds, we define the \textbf{tension field} of $F$ to be the vector field $\tau(F):=\mathrm{tr}_{g}(\nabla dF)$ along $F$, which can be expressed as
\[
    \tau(F)^{\alpha}=F^{\alpha}_{ij}g^{ij}
\]
in local coordinates. Then we define a \textbf{harmonic map heat flow} $F_t:M\to N(t\in[0,T))$ evolving from $F$ to be a solution to
\[
    \frac{\partial F_{t}}{\partial t}=\tau(F_t),F_0=F.
\]
When $\tau(F)=0$, we say $F$ is a \textbf{harmonic map}. When $M$ and $N$ are closed manifolds, the short-time existence and uniqueness of the harmonic map heat flow have been proved in \cite{eells1964harmonic}, and we can define the maximally extended harmonic map heat flow evolved from $F_0$.

By the standard parabolic PDE, we have the following priori estimates:
\begin{proposition}[\cite{MR1428088}]\label{interior estimate}
For a harmonic map heat flow $F_{t}: M\to N(t\in [0,T))$ from a closed manifold $M$ into a Riemannian manifold with bounded curvature, if $|dF_{t}|$ is uniformly bounded, then the higher order derivatives are also uniformly bounded.
\end{proposition}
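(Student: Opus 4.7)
The plan is to establish bounds on $|\nabla^k dF|$ by induction on $k$, using Bochner-type evolution equations together with the parabolic maximum principle on the closed manifold $M$. The assumption that $|dF|$ is uniformly bounded reduces the nonlinear harmonic map heat flow to a quasilinear parabolic system with uniformly bounded coefficients, which is the standard setting for a Bernstein-type a priori estimate.

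First I would derive the evolution identity for $|\nabla^k dF|^2$. Commuting covariant derivatives on $M$ and differentiating the equation $\partial_t F = \tau(F)$, a direct computation gives a Bochner-type formula of the schematic form
\begin{equation*}
\bigl(\partial_t - \Delta_g\bigr)|\nabla^k dF|^2 = -2|\nabla^{k+1}dF|^2 + \mathcal{R}_k,
\end{equation*}
where $\mathcal{R}_k$ is a polynomial in the tensors $\nabla^j dF$ ($0 \leq j \leq k$) whose coefficients are universal contractions of $\mathrm{Rm}_M$, $\mathrm{Rm}_N$ and a bounded number of their covariant derivatives. Under the hypotheses of the proposition all these curvature quantities are bounded, so $\mathcal{R}_k$ is a polynomial in $|\nabla^j dF|$ with bounded coefficients. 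The ``good'' term $-2|\nabla^{k+1}dF|^2$ is the key ingredient that will make the Bernstein absorption work.

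For the base case $k=1$, since $|dF|$ is bounded the identity simplifies to
\begin{equation*}
\bigl(\partial_t - \Delta_g\bigr)|\nabla dF|^2 \leq -2|\nabla^2 dF|^2 + C_1|\nabla dF|^2 + C_2.
\end{equation*}
To obtain a uniform-in-$t$ bound (not merely one that grows as $e^{C_1 t}$), I would introduce an auxiliary function coupling $|\nabla dF|^2$ to $|dF|^2$, for example $\Phi = |\nabla dF|^2/(A - |dF|^2)$ with $A$ chosen so that $A > \sup_{M\times[0,T)} |dF|^2$, and exploit the companion inequality $(\partial_t - \Delta_g)|dF|^2 \leq -2|\nabla dF|^2 + C$ which produces a strong negative contribution. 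After absorbing the gradient cross-terms into $-|\nabla^2 dF|^2$ via Cauchy--Schwarz, one obtains an inequality of the form $(\partial_t - \Delta_g)\Phi \leq -c\Phi^2 + C$, from which the parabolic maximum principle on the closed manifold $M$ yields a uniform bound on $\Phi$, and hence on $|\nabla dF|$.

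For the inductive step, assuming uniform bounds on $|\nabla^j dF|$ for all $j < k$, every term in $\mathcal{R}_k$ is either a bounded multiple of $|\nabla^k dF|$ or of $|\nabla^k dF|^2$, so the evolution of $|\nabla^k dF|^2$ has the same structural form as in the base case. Coupling $|\nabla^k dF|^2$ against $|\nabla^{k-1}dF|^2$ in the auxiliary function, precisely the same Bernstein argument produces a uniform bound on $|\nabla^k dF|$. The main technical obstacle I anticipate is the careful bookkeeping of constants in the Bernstein step, in particular verifying that the cross-term $\nabla|\nabla^{k-1}dF|^2 \cdot \nabla|\nabla^k dF|^2$ can be absorbed into the good Hessian term without destroying the sign of the leading contribution; structurally, however, the argument is entirely parallel to Shi's local derivative estimates in Ricci flow, adapted to the harmonic map heat flow as in \cite{MR1428088}.
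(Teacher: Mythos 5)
The paper does not prove this proposition; it cites it to \cite{MR1428088}, so there is no internal argument to compare against. Your Bernstein-type induction is the standard route to such interior derivative estimates, and the structure is sound: a Bochner-type evolution identity with a good Hessian term $-2|\nabla^{k+1}dF|^2$, coupled with an auxiliary quotient $\Phi=|\nabla^k dF|^2/(A-|\nabla^{k-1}dF|^2)$ so that the companion inequality $(\partial_t-\Delta)|\nabla^{k-1}dF|^2\leq -2|\nabla^k dF|^2+C$ supplies the superlinear damping $-c\Phi^2$ needed to overcome the linear growth term in the raw evolution of $|\nabla^k dF|^2$. The cross-terms do vanish at an interior spatial maximum of $\Phi$ (since $\nabla\Phi=0$ there forces $\nabla|\nabla^k dF|^2=\Phi\,\nabla(A-|\nabla^{k-1}dF|^2)$), so the maximum principle on the closed manifold $M$ closes the argument; the Hessian term is not even needed at the maximum point.

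One caveat you should make explicit. In claiming that $\mathcal{R}_k$ has bounded coefficients, you are implicitly using that the pulled-back tensors $F^*(\nabla^j\mathrm{Rm}_N)$ for $j\leq k-1$ are bounded, since they genuinely enter $(\partial_t-\Delta)\nabla^{k-1}dF$ through repeated differentiation of the curvature commutators. ``Bounded curvature'' of $N$ alone does not control these higher covariant derivatives of $\mathrm{Rm}_N$; one needs a bounded-geometry hypothesis (uniform bounds on $\nabla^j\mathrm{Rm}_N$ to the relevant order), or a localization argument in a coordinate chart with controlled metric coefficients. In the applications in this paper $N$ is closed, which makes this automatic, but if you intend the proposition at the stated generality you should either strengthen the hypothesis to bounded geometry or indicate how the local argument replaces it.
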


Thus, the long-time existence and subsequential convergence are guaranteed when $|dF_{t}|$ is uniformly bounded.

\begin{theorem}\label{long-time existence and convergence}
Let $M$ and $N$ be closed manifolds, $F_{t}:M\to N(t\in [0,T))$ a maximally extended harmonic map heat flow. If $|d F_{t}|$ is uniformly bounded, then $T=\infty$, and there is $t_{k}\to\infty$ such that $F_{t_{k}}$ smoothly converges to a harmonic map $F_{\infty}$.   
\begin{proof}
By Proposition \ref{interior estimate}, derivatives of any orders of $F$ are uniformly bounded. Suppose $T<\infty$. Then by Arzel\'a-Ascoli theorem,
\begin{align*}
    G_{n}:M\times [\frac{T}{2},T]&\to N(n\geq 2)\\
    (p,t)&\mapsto F(p,t-\frac{T}{n})
\end{align*}
subsequently converges to some harmonic map heat flow
\[
    G:M\times [\frac{T}{2},T]\to N,
\]
which is compatible to $G$ on $[\frac{T}{2},T)$. However, $F_{t}:M\times [0,T)\to N$ is maximally extended, and it leads to a contradiction. Thus, $T=\infty$.

To prove the subsequently convergence, we consider
\begin{align*}
    H_{n}:M\times [0,1]&\to N\\
    (p,s)&\mapsto F(p,n+s)
\end{align*}
are uniformly bounded. By Arzel\'a-Ascoli theorem, $F_{n}$ subsequently converges a harmonic map heat flow
\[
    H_{\infty}:M\times [0,1]\to N
\]
of constant energy, so $H_{0}$, which is a subsequential limit of $F_t$, is harmonic.
\end{proof}
\end{theorem}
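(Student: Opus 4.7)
The plan combines Proposition \ref{interior estimate} with a standard Arzel\'a-Ascoli compactness argument and energy monotonicity along the flow to obtain both long-time existence and subsequential convergence to a harmonic map. The first move is to invoke Proposition \ref{interior estimate} to upgrade the uniform bound on $|dF_t|$ to uniform bounds on $|\nabla^\ell dF_t|$ for every $\ell\geq 0$ on $M\times [0,T)$. This single ingredient feeds every subsequent compactness step.

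For long-time existence, I would argue by contradiction. Suppose $T<\infty$; the smooth bounds from the previous step render $\{F_t\}_{t\in[0,T)}$ precompact in every $C^k$-norm, so $F_t$ extends smoothly to a map $F_T:M\to N$ at the endpoint. Feeding $F_T$ as initial data into the Eells-Sampson short-time existence theorem produces a harmonic map heat flow on $[T,T+\varepsilon)$; uniqueness of the initial value problem ensures this glues with $F_t$ to give a smooth extension past $T$, contradicting maximality. Hence $T=\infty$.

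For the subsequential convergence I would use the energy identity $\tfrac{d}{dt}E(F_t)=-\int_M |\tau(F_t)|^2\,dV_g$, where $E(F_t)=\tfrac12\int_M|dF_t|^2\,dV_g$. Since $E(F_t)$ is non-negative and non-increasing, $\int_0^\infty\!\int_M|\tau(F_t)|^2\,dV_g\,dt<\infty$, so one can select $t_k\to\infty$ with $\|\tau(F_{t_k})\|_{L^2(M)}\to 0$. The uniform $C^k$-bounds together with Arzel\'a-Ascoli then extract a subsequence converging smoothly to some $F_\infty:M\to N$; smooth convergence transports the $L^2$-vanishing of $\tau$ to the limit, so $\tau(F_\infty)=0$ and $F_\infty$ is harmonic.

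The portion requiring the most care is the maximality contradiction: one must verify that $F_T$ is genuinely a smooth initial datum compatible with the short-time theory, and that the resulting extension agrees with $F_t$ on $[0,T)$ so that the spliced map is a bona fide smooth solution of the flow (this is where uniqueness of the Cauchy problem is essential). An equivalent workaround, which avoids invoking short-time existence on $F_T$ directly, is to consider the time-shifted family $G_n(p,t):=F(p,t-T/n)$ on a fixed interval and extract a smooth subsequential limit that would extend $F$ past $T$; the same contradiction then closes the argument. Everything else reduces to routine parabolic regularity and compactness.
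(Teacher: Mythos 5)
Your long-time existence argument is essentially the paper's: extend $F_t$ to $t=T$ via uniform $C^k$ bounds and contradict maximality (your time-shift alternative $G_n(p,t)=F(p,t-T/n)$ is exactly the device the paper uses, which sidesteps having to cite short-time existence and uniqueness directly). Where you diverge is the subsequential convergence step. The paper restricts the flow to the unit intervals $[n,n+1]$, extracts a smooth subsequential limit $H_\infty\colon M\times[0,1]\to N$, and observes that since $E(F_t)$ is non-increasing and bounded below it converges, so $H_\infty$ is a heat flow of constant energy and therefore stationary, making $H_\infty(\cdot,0)$ harmonic. You instead invoke the energy identity $\tfrac{d}{dt}E(F_t)=-\int_M|\tau(F_t)|^2$, deduce $\int_0^\infty\!\int_M|\tau(F_t)|^2<\infty$, choose times $t_k\to\infty$ with $\|\tau(F_{t_k})\|_{L^2}\to 0$, and pass this to the limit under smooth convergence. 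Both arguments are correct and standard; the paper's is slightly more self-contained in that it needs only monotonicity of energy (not the precise dissipation rate), while yours makes the mechanism more explicit and is the version more commonly seen in the harmonic map literature. They are equivalent in strength for the purpose at hand.
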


Whether the subsequential convergence can be enhanced to convergence remains
unknown in general. This uniformity question has been studied in \cite{lin2014uniformity}, \cite{topping1997rigidity}, and \cite{huang2013regularity}. We also prove this uniformity when the limiting maps are constant.

\begin{proposition}\label{attraction principle}
Let $(M^m,g)$ be a closed manifold and $F_{t}:M\to N(t\in [0,\infty))$ a harmonic map heat flow with uniformly bounded energy density and subsequently smoothly converges to a constant map $F_{\infty}$. Then $\{F_{t}\}$ converges smoothly to $F_{\infty}$.
\begin{proof}

First, we prove that $F_{t}$ converges uniformly to $F_\infty$ by applying the maximum principle on $\rho_p^2\circ F_{t}$, where $\rho_{p}$ denotes the distance function to $p\in N$.
Let $p$ denote the image of $F_{\infty}$.
By direct computation, the evolution equation of $\rho_p^2\circ F_{t}$ is
\begin{align*}
    \left(\frac{\partial}{\partial t}-\Delta\right)\rho_p^2\circ F_{t}&=(\rho_p^2)_\alpha\frac{\partial F^\alpha}{\partial t}-\left( (\rho_p^2)_{\alpha}F^\alpha_i \right)_{j}g^{ij}\\
    &=(\rho^2_p)_\alpha\frac{\partial F^\alpha}{\partial t}-(\rho^2_p)_{\alpha\beta}F^\alpha_iF^\beta_jg^{ij}-(\rho_p^2)_\alpha F^{\alpha}_{ij}g^{ij}\\
    &=-(\rho^2_p)_{\alpha\beta}F^{\alpha}_{i}F^\beta_{j}g^{ij}.
\end{align*}
Since $(\rho^2_p)_{\alpha\beta}=2h_{\alpha\beta}$ at $p$, there is a geodesic ball $B_R(p)$ such that $(\rho^2_p)_{\alpha\beta}$ is positive definite on it, and we have
\[
    \left(\frac{\partial}{\partial t}-\Delta\right)\rho_p^2\circ F_{t}\leq 0.
\]
Therefore, $\sup\rho_p^2\circ F_t$ is monotone decreasing when $F_{t}(M)\subset B_R(p)$, and $F_{t}$ uniformly converges to $F_\infty$.
Consequently, the subsequential limit of $F_{t}$ is unique.

By Proposition \ref{interior estimate}, any sequence $t_k$ always admits a smoothly convergent $F_{t_{k_{\ell}}}$, so we derive the smooth convergence.
\end{proof}
\end{proposition}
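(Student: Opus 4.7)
The plan is to upgrade the given subsequential convergence to full convergence in two stages: first establish uniform ($C^0$) convergence of $F_t$ to the constant map $F_\infty$, and then bootstrap to smooth convergence by combining this with the uniform higher-order parabolic estimates.

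For the $C^0$ step, I would exploit the fact that $F_\infty \equiv p$ for some $p \in N$, and study the composition $\rho_p^2 \circ F_t$, where $\rho_p$ denotes distance to $p$. A direct chain-rule computation yields an evolution equation of the form
\[
\left(\partial_t - \Delta\right)\rho_p^2\circ F_t = -(\rho_p^2)_{\alpha\beta}\, F^\alpha_i F^\beta_j g^{ij},
\]
since the tension-field contribution in $\partial_t F^\alpha$ cancels against the corresponding term coming from $\Delta$. Because $\mathrm{Hess}(\rho_p^2) = 2h$ at $p$ and varies continuously, I can choose a geodesic ball $B_R(p)$ on which $(\rho_p^2)_{\alpha\beta}$ is positive definite. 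Whenever $F_t(M) \subset B_R(p)$, the right-hand side is non-positive, so $\rho_p^2 \circ F_t$ is a subsolution of the heat equation, and the parabolic maximum principle makes $\sup_M \rho_p^2 \circ F_t$ monotone non-increasing in $t$. The hypothesized subsequential convergence to the constant $p$ produces some time $t_0$ at which $F_{t_0}(M) \subset B_R(p)$ and $\sup_M \rho_p^2 \circ F_{t_0}$ is arbitrarily small; the monotonicity then traps $F_t$ inside $B_R(p)$ for all $t \geq t_0$ and forces $\sup_M \rho_p^2 \circ F_t \to 0$, i.e.\ uniform convergence of $F_t$ to $F_\infty$.

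For the smooth upgrade I would invoke Proposition \ref{interior estimate} to conclude that all derivatives of $F_t$ are uniformly bounded. Given any sequence $t_k \to \infty$, Arzel\'a-Ascoli provides a subsequence along which $F_{t_k}$ converges smoothly. The $C^0$ limit must coincide with $F_\infty$ by the previous step, so every smooth subsequential limit equals $F_\infty$; hence the whole family $\{F_t\}$ converges smoothly to $F_\infty$.

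The delicate point is coordinating the maximum principle with the containment $F_t(M) \subset B_R(p)$: a priori monotonicity of $\sup_M \rho_p^2 \circ F_t$ holds only while the image stays inside the ball, so I cannot derive monotonicity and containment independently. The trick is to use subsequential convergence to the constant map to secure containment and smallness at a \emph{single} time $t_0$, after which the monotonicity self-consistently preserves both. Once this bootstrap is in place, the remainder is routine parabolic theory.
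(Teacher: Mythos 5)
Your proposal is correct and follows essentially the same argument as the paper: the same evolution equation for $\rho_p^2\circ F_t$, the same maximum-principle monotonicity inside a small geodesic ball, and the same bootstrap from $C^0$ convergence to smooth convergence via the interior estimates. Your added remark about using subsequential convergence to secure the containment $F_t(M)\subset B_R(p)$ at one time $t_0$ and then propagating it forward makes explicit a detail that the paper leaves implicit, but the underlying reasoning is identical.
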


When the limiting map $F_\infty$ has the same Dirichlet energy as the initial map $F$, the deformation of harmonic map heat flow can also be characterized explicitly.

\begin{proposition}\label{equal total energy}
Let $(M^m,g)$ and $(N^n,h)$ be closed connected manifolds, $F:M\to N$ a smooth map.
If the harmonic map heat flow $F_{t}:M\to N(t\in [0,\infty))$ subsequently smoothly converges to a harmonic map $F_{\infty}$ with $E(F_{\infty})=E(F)$, then $F=F_{\infty}$.
\begin{proof}
Since the harmonic map heat flow is the gradient descent of Dirichlet energy, for any harmonic map heat flow $F_{t}:M\to N(t\in [T_1,T_2])$, if $E(F_{T_1})=E(F_{T_2})$, then $F_{t}$ is time independent.

Now, let $t_{k}$ be a sequence increasing to $\infty$ such that $F_{t_k}$ smoothly converges to $F_{\infty}$. Then $E(F_{t_{k}})$ is a sequence decreasing to $E(F_{\infty})$ and bounded above by $E(F)=E(F_{\infty})$, and thus $E(F_{t_k})=E(F)$ for all $k$. 

According to the case of closed intervals, $F_{t_{k}}=F$. Consequently, $F=F_{\infty}$.
\end{proof}
\end{proposition}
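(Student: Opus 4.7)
The plan is to exploit the fact that the harmonic map heat flow is the $L^2$-gradient flow of the Dirichlet energy, which yields the standard identity
\[
    \frac{d}{dt}E(F_t)\;=\;-\int_M |\tau(F_t)|^2\,d\mathrm{vol}_g\;\leq\;0,
\]
so that $E(F_t)$ is monotone non-increasing in $t$. The strategy is then the following: use the hypothesis $E(F_\infty)=E(F_0)$ to promote monotonicity to \emph{constancy} of $E(F_t)$ on $[0,\infty)$, conclude from the identity above that $\tau(F_t)\equiv 0$ for every $t$, and finally read off from the evolution equation $\partial_t F_t=\tau(F_t)$ that the flow is stationary, so that $F_\infty=F$.

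For the constancy step, let $t_k\nearrow\infty$ be the sequence along which $F_{t_k}\to F_\infty$ smoothly. Smooth convergence forces $dF_{t_k}\to dF_\infty$ uniformly, hence $E(F_{t_k})\to E(F_\infty)=E(F_0)$. For any fixed $t\geq 0$, picking $t_k>t$ and using the monotonicity of $E(F_{\cdot})$ gives
\[
    E(F_0)\;\geq\;E(F_t)\;\geq\;E(F_{t_k})\;\longrightarrow\;E(F_0),
\]
so $E(F_t)=E(F_0)$ for every $t\geq 0$. The energy identity then forces $\int_M|\tau(F_t)|^2=0$ for every $t$, so $\tau(F_t)\equiv 0$ on $M$, and the flow equation yields $\partial_t F_t\equiv 0$; therefore $F_t\equiv F_0=F$ on $[0,\infty)$, and passing to the subsequence gives $F_\infty=F$.

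There is no substantive obstacle here: the conclusion is a formal consequence of the gradient-flow structure of the harmonic map heat flow, combined with the fact that smooth subsequential convergence is strong enough to guarantee convergence of the Dirichlet energy. One could equivalently phrase the key step as a closed-interval statement: on any $[T_1,T_2]$ with $E(F_{T_1})=E(F_{T_2})$, the integrated identity $\int_{T_1}^{T_2}\!\!\int_M|\tau|^2=0$ forces the flow to be stationary on that interval, and the intervals $[0,t_k]$ then exhaust $[0,\infty)$.
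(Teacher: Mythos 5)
Your proposal is correct and follows essentially the same route as the paper: both exploit that the harmonic map heat flow is the $L^2$-gradient flow of the Dirichlet energy, use $E(F_\infty)=E(F)$ together with monotonicity to force $E(F_t)$ to be constant, and conclude the flow is stationary so that $F=F_\infty$. The only cosmetic difference is that you make the energy-dissipation identity $\frac{d}{dt}E(F_t)=-\int_M|\tau(F_t)|^2$ explicit and deduce $\tau\equiv 0$ directly, whereas the paper states this mechanism as a closed-interval lemma and applies it to $[0,t_k]$.
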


\subsection{Evolution equations}

We aim to establish time-independent upper bounds for energy density to prove the long-time existence and convergence of harmonic map heat flow. Therefore, we are concerned with the evolution equation of pull-back metric and energy density.

The following evolution equation can be found in \cite{lee2024rigidityareanonincreasingmaps}.
\begin{proposition}\label{evolution equation of energy density}
For a harmonic map heat flow $F_{t}:(M^m,g)\to (N^n,h)(t\in [0,T))$,
\begin{align*}
    \left(\left(\frac{\partial}{\partial t}-\Delta\right)F^* h\right)_{ij}=&-(\mathrm{Ric}_M)_i^{\ell}(F^*h)_{\ell j}-(\mathrm{Ric}_M)_j^{\ell}(F^*h)_{\ell i}+2(F^*R_N)_{kij\ell}g^{k\ell}\\
    &-2g^{k\ell}F^{\alpha}_{ik}F^\beta_{j\ell}h_{\alpha\beta},
\end{align*}
therefore,
\[
    \left(\frac{\partial}{\partial t}-\Delta\right)e(F)=-|\nabla dF|^2-(\mathrm{Ric}_M)^{ij}F_i^\alpha F_j^\beta h_{\alpha\beta}+(F^*R_N)_{kij\ell}g^{k\ell}g^{ij}.
\]
\end{proposition}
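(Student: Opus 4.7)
The strategy is to compute $\partial_t(F^*h)_{ij}$ and $\Delta(F^*h)_{ij}$ separately, subtract them, and then trace with $\tfrac12 g^{ij}$ to obtain the evolution of $e(F)$. The curvatures of both $M$ and $N$ will enter through the Ricci identity on the pullback bundle $T^*M\otimes F^{-1}TN$.

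First I would use $\partial_t F^\alpha = \tau^\alpha = g^{k\ell}F^\alpha_{k\ell}$ and the parallelism of $h$ along the pullback connection to get $\partial_t F^\alpha_i = \nabla_i \tau^\alpha$, hence
\[
\partial_t(F^*h)_{ij} = (\nabla_i\tau^\alpha)F^\beta_j h_{\alpha\beta} + F^\alpha_i(\nabla_j\tau^\beta)h_{\alpha\beta}.
\]
Two applications of the Leibniz rule then give
\[
\Delta(F^*h)_{ij} = g^{k\ell}\bigl(F^\alpha_{i;k\ell}F^\beta_j + F^\alpha_i F^\beta_{j;k\ell}\bigr)h_{\alpha\beta} + 2g^{k\ell}F^\alpha_{i;k}F^\beta_{j;\ell}h_{\alpha\beta}.
\]
The cross term accounts for the $-2g^{k\ell}F^\alpha_{ik}F^\beta_{j\ell}h_{\alpha\beta}$ in the target formula, so what remains is to analyze $(\partial_t-\Delta)F^\alpha_i$ and contract with $F^\beta_j h_{\alpha\beta}$, symmetrizing in $(i,j)$.

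Using the Hessian symmetries $F^\alpha_{k\ell}=F^\alpha_{\ell k}$ and $F^\alpha_{k;i}=F^\alpha_{i;k}$, one can write $g^{k\ell}F^\alpha_{k\ell;i} = g^{k\ell}F^\alpha_{i;\ell k} + g^{k\ell}[\nabla_i,\nabla_\ell]F^\alpha_k$; the first summand is $\Delta F^\alpha_i$, while the Ricci identity on $T^*M\otimes F^{-1}TN$ expands the commutator into a Ricci-of-$M$ term plus an $N$-curvature term of shape $(R_N)^\alpha{}_{\beta\gamma\delta}\,g^{k\ell}F^\beta_k F^\gamma_i F^\delta_\ell$. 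Pairing with $F^\beta_j h_{\alpha\beta}$, symmetrizing in $(i,j)$, and invoking the pair symmetries of $R_N$ produces the three stated curvature terms of the evolution of $F^*h$. Contracting the resulting identity with $\tfrac12 g^{ij}$ finally converts the two Ricci terms into $-(\mathrm{Ric}_M)^{ij}F^\alpha_iF^\beta_j h_{\alpha\beta}$, the $N$-term into $(F^*R_N)_{kij\ell}g^{k\ell}g^{ij}$, and the cross term into $-|\nabla dF|^2$. The only real difficulty is sign and index bookkeeping for the Ricci identity applied to a mixed $M$-cotangent/$N$-pullback tensor; this is routine and no conceptual obstacle is expected.
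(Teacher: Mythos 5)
The paper does not prove this proposition; it cites it to Lee--Tam--Wan and moves on, so there is no in-paper argument to compare against. Your sketch is the standard Eells--Sampson Bochner computation for the pullback metric: differentiate $F^*h$ in $t$ via $\nabla_t dF = \nabla\tau$, expand $\Delta(F^*h)_{ij}$ by Leibniz (the cross term gives $-2g^{k\ell}F^\alpha_{ik}F^\beta_{j\ell}h_{\alpha\beta}$), rewrite $\nabla_i\tau^\alpha - \Delta F^\alpha_i = g^{k\ell}[\nabla_i,\nabla_\ell]F^\alpha_k$ by Hessian symmetry, and expand the commutator on $T^*M\otimes F^*TN$ into the $\mathrm{Ric}_M$ and $F^*R_N$ contributions; tracing with $\tfrac12 g^{ij}$ then gives the energy-density identity. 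This is correct, and the one piece you defer---the sign and index bookkeeping of the Ricci identity on the mixed bundle---is indeed routine; the only minor slip is attributing $\partial_t F^\alpha_i = \nabla_i\tau^\alpha$ to the parallelism of $h$, when it actually follows from the symmetry of the second fundamental form of the space-time map, with $\nabla h = 0$ only entering afterwards to pass $\partial_t$ through $h_{\alpha\beta}$.
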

To simplify the evolution equation and apply the tensor maximum principle, we need to consider the frame of singular value decomposition (SVD frame), first introduced by M.T. Wang in \cite{wang2002long} for studying graphical mean curvature flow.

Formally speaking, for a $\mathcal{C}^2$-map $F\colon(M^m,g)\to (N^n,h)$ between Riemannian manifolds, we say a \textbf{singular value decomposition frame (SVD frame)} is a pair of orthonormal frames of $TM$ and $F^*TN$ such that $F^{\alpha}_{i}=\lambda_i\delta^\alpha_i$ is diagonalized and $\lambda_1\geq\cdots\geq \lambda_m\geq 0$. For convenience, we also let $0=\lambda_{m+1}=\lambda_{m+2}=\cdots$. Be aware that a SVD frame is not necessary smooth.

\begin{corollary}\label{evolution equation written in singular values}
For a harmonic map heat flow $F_{t}:(M^m,g)\to (N^n,h)(t\in [0,T))$, with respect to a SVD frame,
\begin{align*}
    \left(\left(\frac{\partial}{\partial t}-\Delta\right)F^* h\right)_{ii}\leq &-2(\mathrm{Ric}_M)_{ii}\lambda_i^2+2\sum_{j=1}^{\min(n,m)}(K_N)_{ij}\lambda_i^2\lambda_j^2,
\end{align*}
where the equality holds at $(p,t)$ when $F$ is totally geodesic at it, and
\[
    \left(\frac{\partial}{\partial t}-\Delta\right)e(F)=-|\nabla dF|^2-\sum_{i=1}^{\min(n,m)}(\mathrm{Ric}_M)_{ii}\lambda_i^2+\sum_{i,j=1}^{\min(n,m)}(K_N)_{ij}\lambda_i^2\lambda_j^2
\]
\begin{proof}
It follows from Proposition \ref{evolution equation of energy density} directly.
\end{proof}
\end{corollary}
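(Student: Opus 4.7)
The plan is to specialize the tensorial evolution equations of Proposition~\ref{evolution equation of energy density} to an SVD frame at an arbitrary point $(p,t)$. Since both sides of the claimed formulas are tensors, I am free to evaluate them in any orthonormal frames of $T_pM$ and $F^*T_pN$, and the SVD pair is the natural choice because it simultaneously diagonalizes $dF$ and $F^*h$. In this frame, at $(p,t)$, one has $F^\alpha_i=\lambda_i\delta^\alpha_i$ and hence $(F^*h)_{ij}=\lambda_i\lambda_j\delta_{ij}$, with the convention $\lambda_i=0$ for $i>\min(m,n)$.

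To derive the first inequality, I set $i=j$ in the formula of Proposition~\ref{evolution equation of energy density}. Each Ricci term simplifies via
\[
-(\mathrm{Ric}_M)_i^{\ell}(F^*h)_{\ell i}=-(\mathrm{Ric}_M)_i^{\ell}\lambda_i^2\delta_{\ell i}=-(\mathrm{Ric}_M)_{ii}\lambda_i^2,
\]
so the two Ricci contributions add to $-2(\mathrm{Ric}_M)_{ii}\lambda_i^2$. For the curvature term I compute
\[
(F^*R_N)_{kiik}=(R_N)_{\alpha\beta\gamma\delta}F^\alpha_k F^\beta_i F^\gamma_i F^\delta_k=\lambda_k^2\lambda_i^2(K_N)_{ki},
\]
and since the $k$ with $\lambda_k=0$ contribute nothing, the contracted sum $2g^{k\ell}(F^*R_N)_{kii\ell}$ collapses to $2\sum_{k=1}^{\min(m,n)}(K_N)_{ik}\lambda_i^2\lambda_k^2$. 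The remaining piece $-2g^{k\ell}F^\alpha_{ik}F^\beta_{i\ell}h_{\alpha\beta}$ is manifestly nonpositive, since it is a sum of squares measured by $h$, and it vanishes whenever $\nabla dF=0$ at $(p,t)$; dropping it yields the stated inequality, with equality when $F$ is totally geodesic at the point.

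The identity for $e(F)=\tfrac{1}{2}g^{ij}(F^*h)_{ij}$ follows by tracing the same proposition with $\tfrac{1}{2}g^{ij}$. The two symmetric Ricci terms combine to $-(\mathrm{Ric}_M)^{j\ell}(F^*h)_{\ell j}$, which in the SVD frame reduces to $-\sum_{i=1}^{\min(m,n)}(\mathrm{Ric}_M)_{ii}\lambda_i^2$. The curvature contraction becomes $\sum_{i,k}(F^*R_N)_{kiik}=\sum_{i,k=1}^{\min(m,n)}(K_N)_{ik}\lambda_i^2\lambda_k^2$, and the derivative term traces exactly to $-|\nabla dF|^2$, giving the claimed identity.

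The only mild subtlety is that an SVD frame need not be smooth as a function of $(p,t)$, so one cannot a priori interpret $\Delta(F^*h)_{ii}$ as the scalar Laplacian of a smooth component function read off in this frame. This is harmless here: the argument rests entirely on the tensorial identity of Proposition~\ref{evolution equation of energy density}, and I am merely reading off its $(i,i)$-entry at a single point in the SVD basis. No genuine obstacle arises; the corollary is essentially bookkeeping built on top of the evolution formula already established.
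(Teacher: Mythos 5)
Your proposal is correct and is essentially the same argument the paper intends by ``follows directly'': specialize Proposition~\ref{evolution equation of energy density} to an SVD frame at a point, using $F^\alpha_i=\lambda_i\delta^\alpha_i$ and $(F^*h)_{ij}=\lambda_i\lambda_j\delta_{ij}$, collapse the Ricci and curvature contractions, and observe that $-2g^{k\ell}F^\alpha_{ik}F^\beta_{i\ell}h_{\alpha\beta}$ is a nonpositive sum of squares that vanishes exactly when $\nabla dF=0$. You also correctly flag and resolve the only subtlety (non-smoothness of the SVD frame), noting that the claim is a pointwise reading of a tensorial identity, which is what makes it compatible with the later use of Proposition~\ref{tensor maximum principle}.
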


In this formula, the concept of partial Ricci curvatures naturally arises, which was first introduced by Z.M. Shen in \cite{shen1993complete} and H. Wu in \cite{wu1987manifolds}. We also introduce the positive part of partial Ricci curvature, which will be used later.

\begin{definition}
For a Riemannian manifold $N$ and orthonormal vectors $v_0,v_1,\ldots,v_q$ in some tangent space $T_pN$, we define the 
\textbf{partial $q$-Ricci curvature} to be
\[\mathrm{Ric}_N^q(v_0;v_1,\ldots,v_q):=\sum_{i=1}^{q}R_N(v_0,v_i,v_i,v_0)
\]
and the \textbf{positive part of partial $q$-Ricci curvature} to be
\[(\mathrm{Ric}_N^q)_{+}(v_0;v_1,\ldots,v_q):=\sum_{i=1}^{q}\max(0,R_N(v_0,v_i,v_i,v_0))
\]
For a subspace $W^{q+1}\leq T_pN$, we can also define the partial $q$-Ricci curvature to be a symmetric bilinear form on $W^{q+1}$ whose associated quadratic form $\mathrm{Ric}^{q}(v;W)$ is defined by
\[
    v\mapsto \sum_{i=1}^{q+1}R_{N}(v,v_i,v_i,v)
\]
where $v_1,\ldots,v_{q+1}$ is an orthonormal basis of $W$.
\end{definition}

\section{Convergence Results}

This section aims to generalize convergence and rigidity results in \cite{lee2023rigiditycontractingmapusing} by describing crucial pointwise curvature conditions for target manifolds. This allows us to generalize their result to various curvature or contracting conditions.

We first show that the $k$-nonnegativity of $\phi g-F^*h$ is preserved through the tensor maximum principle(Proposition \ref{tensor maximum principle}). Let's look at the evolution equation of the pull-back metric. If the Ricci tensor $\mathrm{Ric}_{M}$ of $M$ is bounded below by a constant $c>0$, then $\lambda_i^2(\mathrm{Ric}_{M})_{ii}\geq \lambda_i^2c$; if there is a matrix $\tilde{K}_{ij}$ 
such that $\tilde{K}_{ij}\geq (K_{N})_{ij}:=R_{N}(v_i,v_j,v_j,v_i)$, then $-\lambda_i^2\lambda_j^2(K_N)_{ij}\geq -\lambda_i^2\lambda_j^2\tilde{K}_{ij}$; furthermore, if $\sum_{j}\tilde{K}_{ij}\leq c$, these terms can be further simplified to
\[
    \lambda_i^2(\mathrm{Ric}_M)_{ii}-\sum_{j}\lambda_i^2\lambda_j^2(K_N)_{ij}\geq \lambda_i^2\sum_{j}\tilde{K}_{ij}(1-\lambda_j^2).
\]
Therefore, if $\tilde{K}_{ij}$ has sufficiently good properties, we can prove this monotonicity result. Specifically, the properties we need are as follows:

\begin{definition}
Let $0<\phi\leq 1,c> 0$ be fixed constants, $2\leq k\leq \ell$ fixed integers.
\begin{enumerate}[(a)]
    \item 
    For any symmetric matrix $\tilde{K}\in M_{\ell\times \ell}(\mathbb{R})$ with non-negative entries, if the following conditions hold, then we say $\tilde{K}$ satisfies the \textbf{condition $\sigma(k,\ell,c;\phi)$}.
    \begin{itemize}
        \item $\tilde{K}_{11}=\cdots=\tilde{K}_{\ell\ell}=0$;
        \item $\sum_{i=1}^{\ell}\tilde{K}_{ij}\leq c$;
        \item for every vector $u\in (\mathbb{R}_{\geq 0})^k$ with $u_1+\cdots+u_{k}\leq \phi\cdot k$ and every principal $k\times k$-submatrix $A$ of $\tilde{K}$,
        \[
        u^{t}A(\mathds{1}_{k}-u)\geq 0,\ \ \  -(*)
        \]
        where $\mathds{1}_{k}\in M_{k\times 1}(\mathbb{R})$ denotes the column vector with all entries equal to 1.
    \end{itemize}
    \item When $k=\ell$ and the equality in (*) holds if and only if $u=0$ or $u=\phi\cdot \mathds{1}_{\ell}$, then we say $\tilde{K}$ satisfies the \textbf{strong condition $\sigma(\ell,\ell,c;\phi)$}.
    When $k<\ell$, and the equality in (*) holds if and only if $u=0$ or $u_1+\cdots+u_k=\phi\cdot k$, we also say $\tilde{K}$ satisfies the \textbf{strong condition $\sigma(k,\ell,c;\phi)$}.
    \item 
    For a Riemannian manifold $N^n$, we say $N$ satisfies 
 the \textbf{(strong) condition $\sigma(k,\ell,c;\phi)$} if for any set of $\ell$ orthonormal vectors $\{v_1,\ldots,v_{\ell}\}$ within a tangent space $T_{p}N$ of $N$, there exists a matrix $\tilde{K}_{ij}$ satisfying the (strong) condition $\sigma(k,\ell,c;\phi)$ such that $\tilde{K}_{ij}\geq (K_N)_{ij}:=R_{N}(v_i,v_j,v_j,v_i)$.
\end{enumerate}
We denote $\sigma(k,\ell,c;1)$ by $\sigma(k,\ell,c)$ as an abbreviation.
\end{definition}

According to the definition, conditions $\sigma$ yield upper bounds for positive
part of partial Ricci curvature.

\begin{lemma}\label{upper bound of positive part of partial Ricci curvature}
If $(N^n,h)$ is a manifold satisfying the condition $\sigma(k,\ell,c;\phi)$, then $(\mathrm{Ric}_{N}^{\ell-1})_{+}\leq c$.
\begin{proof}
For every orthonormal vectors $v_1,\ldots,v_{\ell}$ in $T_{p}N$, let $\tilde{K}$ be a matrix satisfying the condition $\sigma$ and $\tilde{K}\geq K_N$. Then we have
\[
    (\mathrm{Ric}_{N}^{\ell-1})_{+}(v_1;v_2,\ldots,v_{\ell})=\sum_{i=2}^{\ell}\max(K_{N}(v_1,v_i),0)\leq \sum_{i=2}^{\ell}\tilde{K}_{1i}\leq c.
\]
\end{proof}
\end{lemma}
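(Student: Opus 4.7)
The statement is essentially an unpacking of the definition, so the plan is short. Given orthonormal vectors $v_1,\ldots,v_\ell\in T_pN$, the plan is to invoke the hypothesis to produce a dominating matrix $\tilde K$ satisfying condition $\sigma(k,\ell,c;\phi)$, and then read off the required bound for $(\mathrm{Ric}_N^{\ell-1})_+(v_1;v_2,\ldots,v_\ell)$ from its row sum.

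First I would fix $p$ and orthonormal $v_1,\ldots,v_\ell$, and choose $\tilde K$ as guaranteed by the definition. The two features of $\tilde K$ that do all the work are that $\tilde K_{1i}\geq 0$ and that $\tilde K_{1i}\geq (K_N)_{1i}=R_N(v_1,v_i,v_i,v_1)$. Together these give $\tilde K_{1i}\geq \max((K_N)_{1i},0)$ for each $i$. Summing over $i=2,\ldots,\ell$ and using $\tilde K_{11}=0$ to include the $i=1$ term for free, the positive part of partial Ricci is bounded by the first row sum of $\tilde K$, which by the symmetry of $\tilde K$ equals the first column sum, hence is at most $c$. Since this works for any choice of the orthonormal frame, the pointwise bound $(\mathrm{Ric}_N^{\ell-1})_+\leq c$ follows.

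There is no genuine obstacle here: the submatrix inequality $u^tA(\mathds 1_k-u)\geq 0$ and the parameters $k$, $\phi$ play no role in this particular lemma; the proof only uses the non-negativity, vanishing diagonal, dominance over $K_N$, and the column-sum bound in the first three bullets of the definition. The only minor care needed is to confirm that the symmetry of $\tilde K$ is implicit in the definition (it is, since $\tilde K$ is declared symmetric), so that the stated condition $\sum_i \tilde K_{ij}\leq c$ can be applied as a row sum on index $1$.
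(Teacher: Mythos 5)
Your proof is correct and follows the same route as the paper's: fix a point and an orthonormal $\ell$-frame, take the dominating matrix $\tilde K$ from the definition, bound $\max((K_N)_{1i},0)$ by $\tilde K_{1i}$ using non-negativity and dominance, and sum the first row, invoking symmetry to apply the column-sum bound. The paper states this more tersely (suppressing the symmetry remark), but the content is identical.
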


The following key lemma establishes the preserved condition in our theory.

\begin{lemma}\label{k-nonnegativity is preserved}
Let $(M^m,g)$ and $(N^n,h)$ be closed connected manifolds. Assume $\mathrm{Ric}_M\geq c$ and that $N$ satisfies the condition $\sigma(k,\ell,c;\phi)$ for some $\min(n,m)\leq\ell\leq n$. Then $\lambda_1^2+\cdots+\lambda_k^2\leq \phi k$ is preserved under the harmonic map heat flow.
\begin{proof}
First, consider the SVD frame of $F$ on $M$. Then the $\min(k,m)$-nonnegativity of $\alpha$ is equivalent to $\lambda_1^2+\cdots+\lambda_k^2\leq k\cdot \phi$.

By the tensor maximum principle (Proposition \ref{tensor maximum principle}), it suffices to show that if $\alpha$ is $\min(k,m)$-nonnegative, then $\sum_{i=1}^{\min(k,m)}\left(\left( \frac{\partial}{\partial t}-\Delta \right)\alpha\right)_{ii}\geq 0$. By Corollary \ref{evolution equation written in singular values}, 
\begin{align*}
    \sum_{i=1}^{\min(k,m)}\left(\left( \frac{\partial}{\partial t}-\Delta \right)\alpha\right)_{ii}&\geq 2\sum_{i=1}^{\min(k,m)}\lambda_i^2(\mathrm{Ric}_M)_{ii}-2\sum_{i=1}^{k}\sum_{j=1}^{\ell}\lambda_i^2\tilde{K}_{ij}\lambda_{j}^2\\
    &\geq 2\sum_{i=1}^{k}\sum_{j=1}^{\ell }\lambda_i^2\tilde{K}_{ij}(1-\lambda_{j}^2)
\end{align*}
Since $\lambda_1^2+\cdots+\lambda_{k}^2\leq \phi\cdot k$, $\lambda_{k+1},\ldots,\lambda_{\ell}\leq\phi\leq 1$. Therefore, according to the definition of condition $\sigma$,
\begin{align*}
\sum_{i=1}^{k}\left(\left( \frac{\partial}{\partial t}-\Delta \right)\alpha\right)_{ii}&\geq \sum_{i,j}^{k}\lambda_i^2\tilde{K}_{ij}(1-\lambda_{j}^2)\geq 0,
\end{align*}
and the $\min(k,m)$-nonnegativity of $\alpha:=\phi\cdot g-F^*h$ is preserved under the harmonic map heat flow.
\end{proof}
\end{lemma}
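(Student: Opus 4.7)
The plan is to recast the pointwise pinching $\sum_{i=1}^{k}\lambda_i^2\leq \phi k$ as the $\min(k,m)$-nonnegativity of the symmetric $(0,2)$-tensor $\alpha:=\phi g-F^*h$ on $M$, and then invoke the tensor maximum principle (Proposition \ref{tensor maximum principle}) along the harmonic map heat flow. In an SVD frame the eigenvalues of $\alpha$ are $\phi-\lambda_i^2$, in the reverse ordering to the $\lambda_i$, so the sum of the smallest $\min(k,m)$ of them is precisely $\phi k-\sum_{i=1}^{\min(k,m)}\lambda_i^2$ (setting $\lambda_i=0$ for $i>m$). Thus, reducing to marginal points, I need to show that whenever this sum vanishes, the corresponding trace $\sum_{i=1}^{\min(k,m)}\big((\partial_t-\Delta)\alpha\big)_{ii}$ is nonnegative.

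I would next plug in Corollary \ref{evolution equation written in singular values}. Since $g$ is time-independent and parallel, $(\partial_t-\Delta)\alpha=-(\partial_t-\Delta)F^*h$, and the corollary yields the diagonal lower bound $((\partial_t-\Delta)\alpha)_{ii}\geq 2(\mathrm{Ric}_M)_{ii}\lambda_i^2-2\sum_{j}(K_N)_{ij}\lambda_i^2\lambda_j^2$. Choose now a matrix $\tilde K$ dominating $K_N$ as furnished by the condition $\sigma(k,\ell,c;\phi)$, with $\tilde K_{ii}=0$ and $\sum_j\tilde K_{ij}\leq c$. Combined with $\mathrm{Ric}_M\geq c$ (so that $(\mathrm{Ric}_M)_{ii}\geq\sum_j\tilde K_{ij}$), this lets me replace the right-hand side by the cleaner lower bound $2\lambda_i^2\sum_{j=1}^{\ell}\tilde K_{ij}(1-\lambda_j^2)$. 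Summing over $i=1,\ldots,\min(k,m)$, I must therefore verify $\sum_{i}\sum_{j}\lambda_i^2\tilde K_{ij}(1-\lambda_j^2)\geq 0$ under the hypothesis $\sum_{i=1}^{k}\lambda_i^2\leq \phi k$.

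To finish, I split the $j$-sum at $j=k$. For $j>k$, the ordering $\lambda_1\geq\cdots\geq\lambda_m\geq 0$ together with $k\lambda_k^2\leq\sum_{i=1}^{k}\lambda_i^2\leq\phi k$ gives $\lambda_j^2\leq\lambda_k^2\leq\phi\leq 1$, so each such term is manifestly nonnegative because $\tilde K\geq 0$. For $j\leq k$, I set $u_i=\lambda_i^2$ for $i=1,\ldots,k$ (noting $\sum u_i\leq\phi k$) and let $A$ be the upper-left $k\times k$ principal block of $\tilde K$; the defining inequality $u^{t}A(\mathds{1}_k-u)\geq 0$ built into the condition $\sigma(k,\ell,c;\phi)$ then closes the estimate, and the tensor maximum principle gives preservation. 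The step I expect to be the main obstacle is exactly this dimensional mismatch between the $k$-variable $\sigma$-inequality and the $\ell$-variable ambient trace produced by the evolution equation; bridging it requires the monotonicity of singular values to control the tail $j>k$, which in turn forces the use of an SVD frame rather than an arbitrary orthonormal frame.
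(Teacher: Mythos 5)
Your proof follows the paper's argument step for step: recast the contracting bound as $\min(k,m)$-nonnegativity of $\alpha=\phi g-F^*h$, invoke the tensor maximum principle, plug in the evolution formula of Corollary~\ref{evolution equation written in singular values} together with $\tilde K\geq K_N$ and $(\mathrm{Ric}_M)_{ii}\geq c\geq\sum_j\tilde K_{ij}$ to reduce to $\sum_i\sum_j\lambda_i^2\tilde K_{ij}(1-\lambda_j^2)\geq 0$, and then split the $j$-sum at $j=k$, handling the tail via the SVD ordering $\lambda_j^2\leq\phi\leq1$ and the head via the defining $\sigma$-inequality applied to the upper-left $k\times k$ block with $u_i=\lambda_i^2$. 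This is the same approach as the paper; if anything, your version spells out the split at $j=k$ and the use of the singular-value ordering a bit more explicitly than the original.
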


The preservation of $\min(k,m)$-nonnegativity of $\alpha$ yields an upper bound for energy density, and hence the higher order derivatives are bounded by the regularity theory of harmonic map heat flow. 
\begin{theorem}[\cite{MR1428088}, Theorem 2.2]\label{estimate of higher order derivatives}
Let $F_{t}:(M,g)\to (N,h)(t\in [0, T))$ be a maximal extended harmonic map heat flow of a closed manifold into another closed manifold $N$. If $|dF|$ is uniformly bounded, then $T=\infty$ and the norms of the higher order derivatives $|\nabla^{(k-1)}dF|$ are also uniformly bounded.
\end{theorem}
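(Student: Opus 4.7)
The plan is to invoke standard parabolic regularity theory for the harmonic map heat flow, proceeding via Bochner-type evolution equations combined with the parabolic maximum principle. The overall strategy has three stages: first, given the uniform bound on $|dF|$, establish a uniform bound on $|\nabla dF|$; second, inductively bootstrap to obtain uniform bounds on $|\nabla^{(k-1)} dF|$ for all $k$; third, use these bounds to rule out finite-time singularity and conclude $T = \infty$.

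For the gradient estimate, I would compute the evolution equation of $|\nabla dF|^2$, which has the schematic form
$$\left(\frac{\partial}{\partial t} - \Delta\right)|\nabla dF|^2 = -2|\nabla^2 dF|^2 + R_M * \nabla dF * \nabla dF + R_N * dF * dF * \nabla dF * \nabla dF + (\nabla R_N) * dF^3 * \nabla dF,$$
where $*$ denotes schematic tensor contraction. Since both $M$ and $N$ are closed, their curvatures and covariant derivatives of curvature are uniformly bounded; combined with the hypothesis $|dF| \leq C$, this bounds the right-hand side by $C_1(1+|\nabla dF|^2) - 2|\nabla^2 dF|^2$. A direct maximum principle on $|\nabla dF|^2$ alone does not close, because the good term $-|\nabla^2 dF|^2$ has no obvious role in controlling $|\nabla dF|^2$. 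The standard remedy is to introduce an auxiliary function such as $P := (A + |dF|^2)|\nabla dF|^2$ with $A$ large; completing the square absorbs the cross term from $\nabla(|dF|^2)\cdot\nabla(|\nabla dF|^2)$ into the good term, yielding an inequality of the form $(\partial_t - \Delta)P \leq -c|\nabla dF|^4 + C$ at a spatial maximum. The maximum principle then produces a uniform bound on $|\nabla dF|$.

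With $|dF|$ and $|\nabla dF|$ both controlled, the same structural pattern persists at each subsequent level: the evolution of $|\nabla^{(j-1)} dF|^2$ has the good term $-2|\nabla^{(j)} dF|^2$ plus polynomial expressions in already-controlled quantities and in the uniformly bounded curvature data of $M$ and $N$. An analogous auxiliary-function argument at each step yields a uniform bound on $|\nabla^{(j-1)} dF|$ for each $j \geq 1$. Once all derivatives of $F$ are uniformly bounded on $[0, T)$, Arzelà–Ascoli produces a smooth limit $F_T$ at time $T$; if $T$ were finite, applying short-time existence to $F_T$ as new initial data would extend the flow beyond $T$, contradicting maximality.

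The main technical obstacle is the first-derivative estimate. The curvature-generated cross terms in $(\partial_t - \Delta)|\nabla dF|^2$ are indefinite in sign, and the natural good term $-|\nabla^2 dF|^2$ cannot be directly traded for $|\nabla dF|^2$-control without additional structure; the choice of auxiliary multiplier that produces a self-improving $-c|\nabla dF|^4$ term is the heart of the argument. The higher-derivative estimates and the long-time existence conclusion are then essentially routine bootstrap consequences, exactly in the spirit of the standard parabolic regularity arguments for harmonic map heat flow.
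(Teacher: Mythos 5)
The paper does not prove this theorem; it is quoted verbatim from the literature as \cite{MR1428088}, Theorem~2.2, and used as a black box (it also appears earlier in the paper as Proposition~\ref{interior estimate}, likewise without proof). Your sketch --- the Bochner-type evolution of $|\nabla^{(j)}dF|^2$, the auxiliary multiplier $P=(A+|dF|^2)|\nabla dF|^2$ that extracts a self-improving $-c|\nabla dF|^4$ from the $-2|\nabla dF|^2$ term in $(\partial_t-\Delta)|dF|^2$ after absorbing the cross term via Young's inequality with $A$ large, the bootstrap to higher derivatives, and the extension argument past any finite $T$ --- is the standard Shi-type derivative estimate one expects the cited source to contain, and it is correct.
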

Therefore, the long-time existence and convergence follow are proved.
\begin{proposition}\label{existence of limiting map}
Let $(M^m,g)$ and $(N^n,h)$ be closed connected manifolds. Assume $\mathrm{Ric}_M\geq c$ and $N$ satisfies the condition $\sigma(k,\ell,c;\phi)$  for some $\ell\geq \min(n,m)$. Then for every smooth map $F\colon M\to N$ with $\lambda_1^2+\cdots+\lambda_k^2\leq \phi k$, the harmonic map heat flow $F_{t}$ exists for all time, and there is a sequence $t_{k}\to\infty$ such that $F_{t_k}$ smoothly converges to a harmonic map $F_{\infty}$ with $\lambda_1^2+\cdots+\lambda_k^2\leq \phi k$.
\begin{proof}
By Theorem \ref{estimate of higher order derivatives} and Lemma \ref{k-nonnegativity is preserved}, the higher-order derivatives of $F_{t}$ are uniformly bounded, hence there is a sequence $t_{k}\to\infty$ such that $F_{t_k}$ smoothly converges to a harmonic map $F_{\infty}$ by Arzel\`a-Ascoli Theorem. Moreover, since $F_{t_k}$ converges smoothly to $F_{\infty}$, this inequality $\lambda_1^2+\cdots+\lambda_k^2\leq \phi k$ also applies to $F_\infty$.
\end{proof}
\end{proposition}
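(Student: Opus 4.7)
The plan is to package Lemma \ref{k-nonnegativity is preserved} together with the regularity theory already recorded in the preliminaries. The central observation is that the preserved contracting condition is strong enough to give a time-uniform bound on the energy density, after which everything reduces to standard harmonic map heat flow machinery.

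First, I would apply Lemma \ref{k-nonnegativity is preserved} directly to the initial map $F$: the inequality $\lambda_1^2+\cdots+\lambda_k^2\leq \phi k$ is exactly the $\min(k,m)$-nonnegativity of $\alpha := \phi g - F^{*}h$, so this property persists along the flow for all $t$ in the existence interval $[0,T)$. In particular each $\lambda_i^2$ is dominated by the partial sum and hence bounded by $\phi k$, so $|dF_t|^2 = \sum_i \lambda_i^2 \leq \min(n,m)\cdot\phi k$ uniformly in $t$.

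Next, I would feed this uniform bound into Theorem \ref{estimate of higher order derivatives}: it gives $T=\infty$ and uniform bounds on all higher covariant derivatives $|\nabla^{(j)} dF_t|$. Arzel\`a--Ascoli then extracts, from any sequence $t_k\to\infty$, a subsequence (still denoted $t_k$) along which $F_{t_k}$ converges smoothly to some $C^{\infty}$-map $F_\infty$. Harmonicity of $F_\infty$ follows from the standard energy identity $\tfrac{d}{dt} E(F_t) = -\|\tau(F_t)\|_{L^2}^2$ and $E(F_t)\geq 0$, which force $\int_0^\infty \|\tau(F_t)\|_{L^2}^2\,dt < \infty$; choosing $t_k$ so that $\|\tau(F_{t_k})\|_{L^2}\to 0$ and combining with the smooth subsequential convergence yields $\tau(F_\infty) = 0$. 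Because the convergence is smooth the singular values converge pointwise, so the closed inequality $\lambda_1^2+\cdots+\lambda_k^2\leq\phi k$ is inherited by $F_\infty$.

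There is no genuine obstacle in this step; the real work has already been done in Lemma \ref{k-nonnegativity is preserved}. Its role here is precisely to supply the time-independent $|dF_t|$ bound needed to activate Theorem \ref{estimate of higher order derivatives} (or equivalently Theorem \ref{long-time existence and convergence}), after which long-time existence, subsequential smooth convergence to a harmonic limit, and persistence of the contracting condition all follow in a single line.
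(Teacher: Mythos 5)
Your proposal is correct and follows essentially the same route as the paper: Lemma \ref{k-nonnegativity is preserved} gives the time-uniform $|dF_t|$ bound, Theorem \ref{estimate of higher order derivatives} then gives long-time existence and uniform higher-order bounds, Arzel\`a--Ascoli produces a smooth subsequential limit, and smoothness of the convergence carries the contracting inequality to $F_\infty$. The only minor difference is that you spell out harmonicity of $F_\infty$ via the energy dissipation identity $\frac{d}{dt}E(F_t)=-\|\tau(F_t)\|_{L^2}^2$, whereas the paper relies on the constant-energy-heat-flow argument already recorded in Theorem \ref{long-time existence and convergence}; both are standard and interchangeable here.
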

We can further characterize this limit by using the equality condition in the inequality.
\begin{lemma}\label{characterizing limiting map}
Let $(M^m,g)$ and $(N^n,h)$ be closed connected manifolds. Assume $\mathrm{Ric}_{M}\geq c$ and that $N$ satisfies the condition $\sigma(k,\ell,c;\phi)$ for some $\ell\geq\min(n,m)$. Then for every harmonic map $F\colon M\to N$ with $\lambda_1^2+\cdots+\lambda_k^2\leq \phi k$ and a SVD frame of $F$ at $p$, we have
\begin{enumerate}[(i)]
    \item $(\nabla dF)_{p}=0$;
    \item $((\mathrm{Ric}_M)_{ii}-c)\lambda_i^2=0$ for all $1\leq i\leq \min(n,m)$;
    \item $\sum_{i,j\in I}\lambda_i^2\tilde{K}_{ij}(1-\lambda_j^2)=0$ for every $I\subset \{1,\ldots,\ell\}$ containing $k$ indices and $\tilde{K}_{ij}$ is the matrix satisfying the condition $\sigma$ such that $\tilde{K}_{ij}\geq K_N$;
    \item $\lambda_i^2(\tilde{K}_{ij}-(K_{N})_{ij})\lambda_{j}^2=0$ for every $1\leq i,j\leq \ell$.
\end{enumerate}
\begin{proof}
Since $F$ is harmonic, $F_{t}=F$ is a harmonic map heat flow. Consider the SVD frame of $F$ on $M$. By Corollary \ref{evolution equation written in singular values}, 
{\small
\begin{align*}
    0&=\int_{M}\left( \frac{\partial}{\partial t}-\Delta \right)e(F_t)\\
    &=\int_{M}\left(-|\nabla dF|^2-\sum_{i=1}^{\min(n,m)}\lambda_i^2(\mathrm{Ric}_{M})_{ii}+\sum_{i,j=1}^{\min(n,m)}\lambda_i^2\lambda_j^2(K_{N})_{ij}\right)\\
    &=\int_{M}\left(-|\nabla dF|^2-\sum_{i=1}^{\min(n,m)}\lambda_i^2((\mathrm{Ric}_M)_{ii}-c)-\sum_{i=1}^{\ell}c\lambda_i^2-\sum_{i,j=1}^{\ell}\lambda_i^2\lambda_j^2(\tilde{K}_{ij}-(K_N)_{ij})+\sum_{i,j=1}^{\ell}\lambda_i^2\lambda_j^2(\tilde{K}_{ij})\right)\\
    &\leq \int_{M}\left(-|\nabla dF|^2-\sum_{i=1}^{\min(n,m)}\lambda_i^2((\mathrm{Ric}_M)_{ii}-c)-\sum_{i,j=1}^{\ell}\lambda_i^2\lambda_j^2(\tilde{K}_{ij}-(K_N)_{ij})-\sum_{i,j=1}^{\ell}\lambda_i^2\tilde{K}_{ij}(1-\lambda_j^2)\right)\\
    &\leq \int_{M}\left(-|\nabla dF|^2-\sum_{i=1}^{\min(n,m)}\lambda_i^2((\mathrm{Ric}_M)_{ii}-c)-\sum_{i,j=1}^{\ell}\lambda_i^2\lambda_j^2(\tilde{K}_{ij}-(K_N)_{ij})-\sum_{i,j=1}^{\ell}\lambda_i^2\lambda_j^2(\tilde{K}_{ij}-(K_N)_{ij})\right)\\
    &-\int_{M}C\sum_{
    I\subset \{1,\ldots,\ell\},
    \#I=k
    }\sum_{i,j\in I}\lambda_i^2\tilde{K}_{ij}(1-\lambda_j^2)\leq 0\\
\end{align*}
}
where $C:={\binom{\ell-2}{k-2}}^{-1}$. Each term in the last two rows is non-positive, so they are all zero by the monotonicity.
\end{proof}
\end{lemma}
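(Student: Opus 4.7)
The plan is to test the evolution equation for the Dirichlet energy density against the stationary solution $F_t\equiv F$ and then compare the resulting integral identity against a curated collection of non-positive expressions. Since $F$ is harmonic, $F_t\equiv F$ solves the harmonic map heat flow, so $\partial_t e(F_t)=0$, and the Laplacian integrates to zero over the closed manifold $M$. Applying Corollary~\ref{evolution equation written in singular values} in an SVD frame then yields
\[
0 \;=\; \int_M \Bigl( -|\nabla dF|^2 \;-\; \sum_{i=1}^{\min(n,m)} (\mathrm{Ric}_M)_{ii}\lambda_i^2 \;+\; \sum_{i,j=1}^{\min(n,m)} \lambda_i^2\lambda_j^2 (K_N)_{ij} \Bigr).
\]
The strategy is to exhibit the integrand as a sum of manifestly non-positive expressions, so that the vanishing of the integral forces each piece to vanish pointwise.

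To do this I would (a) extract $-((\mathrm{Ric}_M)_{ii}-c)\lambda_i^2\le 0$ using the Ricci lower bound $\mathrm{Ric}_M\ge c$; (b) add and subtract $\lambda_i^2\lambda_j^2\tilde{K}_{ij}$ to isolate the piece $-\lambda_i^2\lambda_j^2(\tilde{K}_{ij}-(K_N)_{ij})\le 0$ coming from $\tilde{K}\ge K_N$; and (c) absorb the leftover $-c\sum_i\lambda_i^2$ into $-\sum_i\lambda_i^2\sum_{j=1}^{\ell}\tilde{K}_{ij}$ via $\sum_j\tilde{K}_{ij}\le c$. Extending the $j$-summation from $\min(n,m)$ to $\ell$ costs nothing because $\lambda_j=0$ for $j>\min(n,m)$. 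Regrouping the surviving $\tilde{K}$-contributions into $-\lambda_i^2\tilde{K}_{ij}(1-\lambda_j^2)$, I arrive at
\[
0 \;\le\; \int_M \Bigl( -|\nabla dF|^2 - \sum_{i}((\mathrm{Ric}_M)_{ii}-c)\lambda_i^2 - \sum_{i,j=1}^{\ell}\lambda_i^2\lambda_j^2(\tilde{K}_{ij}-(K_N)_{ij}) - \sum_{i,j=1}^{\ell}\lambda_i^2\tilde{K}_{ij}(1-\lambda_j^2) \Bigr).
\]
The first three pieces of the integrand are visibly non-positive; the remaining task is to show that the fourth is as well.

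For the fourth piece I would average over all $k$-element subsets $I\subset\{1,\ldots,\ell\}$. Setting $a_{ij}:=\lambda_i^2\tilde{K}_{ij}(1-\lambda_j^2)$ and using $\tilde{K}_{ii}=0$ from the $\sigma$-condition, a standard counting identity gives $\sum_{i,j=1}^{\ell} a_{ij} = \binom{\ell-2}{k-2}^{-1}\sum_{|I|=k}\sum_{i,j\in I} a_{ij}$. Each inner sum coincides exactly with the quadratic form $u^t A(\mathds{1}_k-u)$ for $u=(\lambda_i^2)_{i\in I}$ and $A$ the principal $k\times k$ submatrix of $\tilde{K}$ indexed by $I$; by the monotone ordering of the singular values, $\sum_{i\in I}\lambda_i^2\le\sum_{i=1}^{k}\lambda_i^2\le\phi k$, so the defining inequality $(*)$ of condition $\sigma(k,\ell,c;\phi)$ applies and each inner sum is non-negative. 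Hence the fourth piece is also non-positive, and since the integrand is now a sum of four non-positive expressions whose integral is zero, each must vanish identically on $M$: (i) follows from the first, (ii) from the second, (iv) from the third, and (iii) from the non-negativity of every individual $k$-subset contribution. I expect the main subtlety to be the combinatorial averaging together with the verification that $\sum_{i\in I}\lambda_i^2\le\phi k$ for every $k$-subset of $\{1,\ldots,\ell\}$, not just the leading one, which relies on the decreasing ordering $\lambda_1\ge\lambda_2\ge\cdots$.
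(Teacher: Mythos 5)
Your proposal is correct and follows essentially the same route as the paper's proof: integrating the evolution equation of $e(F)$ for the stationary flow, decomposing the integrand into four non-positive pieces via the Ricci lower bound, $\tilde K\ge K_N$, $\sum_j\tilde K_{ij}\le c$, and the combinatorial averaging over $k$-subsets with weight $\binom{\ell-2}{k-2}^{-1}$, then concluding that each piece vanishes. Your write-up is in fact slightly cleaner than the paper's final display, which appears to duplicate the $\tilde K-K_N$ term, but the argument is the same.
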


Therefore, we can obtain one rigidity result under condition $\sigma$.
\begin{theorem}\label{first convergence result}
Let $(M^m,g)$ and $(N^n,h)$ be closed connected manifolds. Assume $\mathrm{Ric}_M\geq c$ with strict inequality $(\mathrm{Ric}_M)_{p_0}>c$ at some $p_0\in M$ and $N$ satisfies the condition $\sigma(k,\ell,c;\phi)$ for some $\ell\geq\min(n,m)$. Then for every smooth map $F\colon M\to N$ with $\lambda_1^2+\cdots+\lambda_k^2\leq \phi k$, the harmonic map heat flow $F_{t}\colon M\to N(t\in [0,\infty))$ exists for all time and converges to a constant map smoothly.
\begin{proof}
Let $F_{\infty}$ denote the harmonic map in Proposition \ref{existence of limiting map}. By Lemma \ref{characterizing limiting map}, $(dF_{\infty})_{p}=0$. Since $F_{\infty}$ is totally geodesic, it follows that $F_{\infty}$ is a constant map. We thus obtain a homotopy $F_{t}$ between $F=F_0$ and constant map $F_{\infty}$ by Proposition \ref{attraction principle}.
\end{proof}
\end{theorem}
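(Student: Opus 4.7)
The plan is to chain the three preparatory results of this section: Proposition \ref{existence of limiting map} supplies a subsequential harmonic limit $F_{\infty}$ of the flow; Lemma \ref{characterizing limiting map} forces $F_{\infty}$ to be totally geodesic and places sharp pointwise constraints on its singular values; and Proposition \ref{attraction principle} promotes subsequential smooth convergence to a constant map into genuine smooth convergence.

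Concretely, I would first invoke Proposition \ref{existence of limiting map} to obtain long-time existence on $[0,\infty)$ together with a sequence $t_{k}\to\infty$ such that $F_{t_{k}}\to F_{\infty}$ smoothly, where $F_{\infty}$ is harmonic and still satisfies $\lambda_{1}^{2}+\cdots+\lambda_{k}^{2}\leq\phi k$. Then I would apply Lemma \ref{characterizing limiting map} to $F_{\infty}$: part (i) yields $\nabla dF_{\infty}\equiv 0$, so $F_{\infty}$ is totally geodesic and $|dF_{\infty}|^{2}$ is a constant function on the connected manifold $M$; part (ii) gives the pointwise identity $((\mathrm{Ric}_{M})_{ii}-c)\lambda_{i}^{2}=0$ in any SVD frame of $F_{\infty}$.

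The decisive step is to feed in the strict Ricci inequality at $p_{0}$. Since $\mathrm{Ric}_{M}(p_{0})>c\cdot g(p_{0})$ as symmetric bilinear forms, every diagonal entry $(\mathrm{Ric}_{M})_{ii}(p_{0})$ in the SVD frame of $F_{\infty}$ at $p_{0}$ strictly exceeds $c$, so part (ii) forces $\lambda_{i}(F_{\infty})(p_{0})=0$ for every $i$, i.e.\ $(dF_{\infty})_{p_{0}}=0$. Combined with the constancy of $|dF_{\infty}|^{2}$ on $M$, this yields $dF_{\infty}\equiv 0$, so $F_{\infty}$ is a constant map. Finally, since the energy density $|dF_{t}|^{2}$ stays uniformly bounded under the flow by Lemma \ref{k-nonnegativity is preserved} and some subsequence converges smoothly to the constant map $F_{\infty}$, Proposition \ref{attraction principle} upgrades this to smooth convergence of the whole family $\{F_{t}\}$. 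The main obstacle is really this propagation step: strict Ricci at a single point only forces $dF_{\infty}$ to vanish there, and it is the total-geodesy conclusion of Lemma \ref{characterizing limiting map}(i) that allows us to spread this pointwise vanishing to a global statement; without it, the strict Ricci hypothesis at $p_{0}$ would be far too weak to identify $F_{\infty}$ with a constant.
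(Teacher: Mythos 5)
Your proposal is correct and follows exactly the same line of argument as the paper: Proposition~\ref{existence of limiting map} for the subsequential harmonic limit $F_\infty$, Lemma~\ref{characterizing limiting map}(i)--(ii) combined with the strict Ricci inequality at $p_0$ to get $(dF_\infty)_{p_0}=0$ and total geodesy, propagation of the vanishing from $p_0$ to all of $M$ via constancy of $|dF_\infty|^2$, and Proposition~\ref{attraction principle} to upgrade subsequential to full smooth convergence. Yours is simply a more fully spelled-out version of the paper's terse proof (which writes ``$(dF_\infty)_p=0$'' without naming $p=p_0$ and leaves the propagation step to the reader as ``since $F_\infty$ is totally geodesic'').
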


The boundary case can be further characterized under the strong condition $\sigma$.
\begin{lemma}\label{rigidity of harmonic map}
Let $(M^m,g)$ and $(N^n,h)$ be closed connected manifolds. Assume $\mathrm{Ric}_M\geq c$ and $N$ satisfies the strong condition $\sigma(k,\ell,c;\phi)$ for some $\ell\geq\min(n,m)$. Then for every harmonic map $F\colon M\to N$ with $\lambda_1^2+\cdots+\lambda_k^2\leq \phi k$,
\begin{enumerate}
    \item[(i)] either $\phi=1,\ell=\min(n,m)$ and $F$ is a totally geodesic Riemannian subimmersion;
    \item[(ii)] or $F$ is a constant map.
\end{enumerate}
\begin{proof}

If $F$ is constant, then the proof is done, so we assume $F$ is non-constant in the following discussion.

We first show that $F\colon (M,\phi\cdot g)\to (N,h)$ is a totally geodesic Riemannian subimmersion. By Lemma \ref{characterizing limiting map}, $F$ is totally geodesic, so it remains to show that at each point, the first $\min(m,n)$ singular values are either all $\phi$ or 0.

Consider the SVD frame of $F$ on $M$ and $\tilde{K}_{ij}$ used in the condition $\sigma$ of $N$. By Lemma \ref{characterizing limiting map}, for any $I\subset \{1,\ldots,\ell\}$ containing $k$ elements, $\sum_{i,j\in I}\lambda_i^2\tilde{K}_{ij}(1-\lambda_j^2)=0$. We discuss the case $k=\ell$ and $k<\ell$ separately. When $k=\ell$, our claim holds by definition. When $k<\ell$, since $\lambda_{i_1}^2+\cdots+\lambda_{i_k}^2=k\phi$ or $0$, $\lambda_{1}^2-\lambda_{\ell}^2=k\phi$ or $\lambda_{1}^2-\lambda_{\ell}^2=0$. 
If $\lambda_{1}=\lambda_{\ell}$, then it's done. We are going to show the other case is impossible. In this case, since $\lambda_1^2\geq k\phi$, it follows $\lambda_1^2=k\phi$, $\lambda_2^2=\cdots=\lambda_{\ell}^2=0$. However, evaluating the original evolution equation of energy density yields:
\begin{align*}
    0=\int_{M}\Delta e(F_t)=\int_{M}k\phi(\mathrm{Ric}_{M})_{11}\geq \int_{M}k\phi c>0,
\end{align*}
which leads to a contradiction. Thus, $\lambda_{1}=\lambda_{\ell}$ always holds. Since $\lambda_{\ell}\neq 0$, we have $\ell=\min(n,m)$.

By Corollary \ref{evolution equation written in singular values}, we have
\[
    0\leq \int_{M}-c\phi \min(n,m)+c\phi^2\min(n,m)=\int_{M}c\min(n,m)\phi(\phi-1)\leq 0,
\]
and thus $\phi=1$.

\end{proof}
\end{lemma}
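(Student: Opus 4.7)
The plan is to combine the pointwise equality conditions of Lemma \ref{characterizing limiting map} with the strong-equality clause of $(*)$ in $\sigma(k,\ell,c;\phi)$ in order to pin down the singular values of $F$ at every point, then to rule out a degenerate rank-$1$ branch, and finally to fix $\phi$ via an integration on $M$. Throughout, assume $F$ is non-constant and work in an SVD frame. Lemma \ref{characterizing limiting map} already yields $\nabla dF\equiv 0$, $((\mathrm{Ric}_M)_{ii}-c)\lambda_i^2=0$, and, for each $k$-subset $I\subset\{1,\ldots,\ell\}$, the identity
\[
\sum_{i,j\in I}\lambda_i^2\tilde{K}_{ij}(1-\lambda_j^2)=u_I^{\,t}A_I(\mathds{1}_k-u_I)=0,
\]
where $u_I=(\lambda_i^2)_{i\in I}$ and $A_I$ is the principal $k\times k$ submatrix of $\tilde{K}$ indexed by $I$. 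Since the $\lambda_i$ are nonincreasing, $\sum_{i\in I}\lambda_i^2\leq\sum_{i=1}^k\lambda_i^2\leq\phi k$, so the strong-equality clause applies: for every $k$-subset $I$, either $\lambda_i=0$ for all $i\in I$, or $\sum_{i\in I}\lambda_i^2=\phi k$.

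Next I run a pointwise analysis at $p\in M$ based on the rank $r$ of $dF|_p$. The case $k=\ell$ is immediate, as the unique $k$-subset gives either $u=0$ or $u=\phi\mathds{1}_\ell$. For $k<\ell$, I split into two sub-cases. First, if $r\geq k$, taking $I=\{1,\ldots,k-1,a\}$ with $a\in\{k,\ldots,\ell\}$ yields $\lambda_a^2=\phi k-\sum_{j<k}\lambda_j^2=:\mu^2$ whenever $\lambda_a>0$, and $\mu^2=0$ whenever $\lambda_a=0$ (noting $u_I\neq 0$ since $\lambda_1>0$). Thus if $r<\ell$, choosing $a>r$ forces $\mu=0$ while any $a\in\{k,\ldots,r\}$ then forces $\lambda_a=0$, contradicting $r\geq k$. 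Hence $r=\ell$ and $\lambda_k=\cdots=\lambda_\ell=\mu$. Comparing $I=\{1,\ldots,k\}$ with $I'=\{2,\ldots,k+1\}$ produces $\lambda_1^2=\mu^2$, and $\lambda_1\geq\cdots\geq\lambda_\ell$ upgrades this to $\lambda_1=\cdots=\lambda_\ell=\mu$; the constraint $k\mu^2=\phi k$ then pins down $\mu=\sqrt{\phi}$. Second, if $1\leq r<k$, then $I=\{1,\ldots,k\}$ gives $\lambda_1^2+\cdots+\lambda_r^2=\phi k$, while (for $r\geq 2$) $I'=\{2,\ldots,k+1\}$ gives $\lambda_2^2+\cdots+\lambda_r^2=\phi k$, and subtraction contradicts $\lambda_1>0$; hence $r=1$ with $\lambda_1^2=\phi k$.

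Pointwise, therefore, $e(F)(p)\in\{0,\phi k,\phi\ell\}$; when $k<\ell$ these three values are distinct, so continuity of $e(F)$ and connectedness of $M$ place $M$ into exactly one regime. The rank-$1$ regime is ruled out using Corollary \ref{evolution equation written in singular values}: with $\partial_t e=0$ (as $F$ is harmonic), $\nabla dF=0$, and only $\lambda_1\neq 0$, the evolution identity reduces pointwise to $\Delta e(F)=\phi k(\mathrm{Ric}_M)_{11}$, whence by the divergence theorem $0=\int_M\Delta e(F)=\int_M\phi k(\mathrm{Ric}_M)_{11}\geq\phi kc\cdot\mathrm{vol}(M)>0$, a contradiction. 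So a non-constant $F$ must realize $\lambda_1=\cdots=\lambda_\ell=\sqrt{\phi}$, which forces $\ell=\min(n,m)$ (since $\sqrt{\phi}>0$ while $\lambda_i=0$ for $i>\min(n,m)$) and exhibits $F:(M,\phi g)\to(N,h)$ as a totally geodesic Riemannian subimmersion.

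Finally, to fix $\phi$, I integrate Corollary \ref{evolution equation written in singular values} in the subimmersion regime; using $\partial_t e=0$ and $\nabla dF=0$, the identity collapses to
\[
\phi\int_M\sum_{i=1}^\ell(\mathrm{Ric}_M)_{ii}=\phi^2\int_M\sum_{i,j=1}^\ell(K_N)_{ij},
\]
and combining $\sum_i(\mathrm{Ric}_M)_{ii}\geq\ell c$ (from $\mathrm{Ric}_M\geq c$) with $\sum_{i,j=1}^\ell(K_N)_{ij}\leq\sum_{i,j=1}^\ell\tilde{K}_{ij}\leq\ell c$ (symmetry of $\tilde{K}$ and the column-sum bound in $\sigma$) yields $\phi\ell c\cdot\mathrm{vol}(M)\leq\phi^2\ell c\cdot\mathrm{vol}(M)$, so $\phi\geq 1$, and combined with $0<\phi\leq 1$ this forces $\phi=1$. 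I expect the main obstacle to be the combinatorial case analysis in the range $k<\ell$, where the strong-equality branches of $(*)$ must be tracked across overlapping $k$-subsets to extract the rank dichotomy; the integration arguments are routine once the pointwise rigidity has been secured.
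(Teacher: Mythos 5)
Your proof is correct and follows essentially the same route as the paper: extract pointwise rigidity of the singular values from Lemma \ref{characterizing limiting map} together with the strong-equality clause of $\sigma$, rule out the degenerate rank-one branch by integrating the Bochner-type identity from Corollary \ref{evolution equation written in singular values}, and then fix $\phi=1$ by a second integration. Your version is, if anything, more careful than the paper's: the paper compresses the pointwise case analysis into the $\lambda_1^2-\lambda_\ell^2\in\{0,k\phi\}$ observation and tacitly uses continuity of $e(F)$ and connectedness of $M$ to promote the pointwise trichotomy to a global one before integrating, whereas you spell out both the rank dichotomy and the connectedness step explicitly.
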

The following proposition simplifies the discussion concerning Riemannian submersions between closed manifolds.

\begin{proposition}[R. Hermann, \cite{hermann1960sufficient}, Theorem 1]\label{tg Rimennian submersions are fiber bundles}
Totally geodesic Riemannian submersions between closed manifolds are Riemannian fiber bundles.
\begin{proof}
\end{proof}
\end{proposition}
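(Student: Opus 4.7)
The plan is to realize $F$ as a fiber bundle by constructing explicit local trivializations from horizontal lifts of curves in $N$, and then to use the totally geodesic fibers to make these trivializations fiberwise isometric. Write $\mathcal{V}:=\ker dF$ for the vertical distribution and $\mathcal{H}:=\mathcal{V}^{\perp}$ for its orthogonal complement, so that $dF\colon\mathcal{H}_p\to T_{F(p)}N$ is a linear isometry at every $p$. Given a smooth curve $\gamma\colon[0,1]\to N$ and a basepoint $p\in F^{-1}(\gamma(0))$, standard ODE theory produces a unique short-time horizontal lift $\tilde\gamma$ characterized by $\tilde\gamma(0)=p$, $F\circ\tilde\gamma=\gamma$, and $\dot{\tilde\gamma}(t)\in\mathcal{H}_{\tilde\gamma(t)}$. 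Because $M$ is closed (hence complete) and $|\dot{\tilde\gamma}(t)|=|\dot\gamma(t)|$ is uniformly bounded, a standard no-escape argument extends $\tilde\gamma$ to all of $[0,1]$.

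Next I would fix $q_0\in N$ and a convex normal neighborhood $U$ of $q_0$, so that every $q\in U$ is joined to $q_0$ by a unique minimizing geodesic $\gamma_q\colon[0,1]\to U$ depending smoothly on $q$. Define
\[
    \Phi\colon U\times F^{-1}(q_0)\longrightarrow F^{-1}(U),\qquad \Phi(q,p):=\tilde\gamma_q^{\,p}(1),
\]
where $\tilde\gamma_q^{\,p}$ is the horizontal lift of $\gamma_q$ starting at $p$. Smooth dependence of the lifting ODE on initial data and parameters makes $\Phi$ smooth, and reversing the horizontal lift (lifting $q\mapsto q_0$ along $\gamma_q$ reparametrized backwards) furnishes a smooth inverse, so $\Phi$ is a diffeomorphism commuting with $F$. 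This yields the desired local trivialization and equips $F$ with a smooth fiber bundle structure with model fiber $F^{-1}(q_0)$.

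To upgrade this to a \emph{Riemannian} fiber bundle I would show that the horizontal parallel transport $p\mapsto\tilde\gamma^{\,p}(1)$ is an isometry between the fibers at the endpoints of $\gamma$. This is where the totally geodesic hypothesis enters, most cleanly through O'Neill's $T$-tensor, whose vanishing is equivalent to the fibers being totally geodesic. Concretely, if $V,W$ are vertical vector fields obtained by parallel-transporting vertical vectors along the horizontal lifts and $X$ is the horizontal lift of $\dot\gamma$, then $X\langle V,W\rangle=\langle\nabla_X V,W\rangle+\langle V,\nabla_X W\rangle$, and O'Neill's formulas identify the vertical components of $\nabla_X V$ as values of the $T$-tensor; when $T\equiv 0$ the inner product $\langle V,W\rangle$ is therefore constant along $\gamma$, so horizontal transport is fiberwise an isometry.

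The main obstacle, as I see it, is this last step rather than the bundle structure itself: completeness alone already secures the smooth trivialization, whereas the Riemannian character of the bundle genuinely requires the totally geodesic hypothesis and depends on the O'Neill tensor identities. This is essentially the route of Hermann's original argument in \cite{hermann1960sufficient}.
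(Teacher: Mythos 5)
The paper itself supplies no written proof for this proposition—it simply cites Hermann's original theorem and leaves the proof environment empty. Your sketch reconstructs the standard argument, which is indeed essentially Hermann's: use horizontal lifts together with completeness (here, compactness) of $M$ to get the fiber bundle structure, and then use the totally geodesic hypothesis to show that the holonomy transport between fibers is isometric. The overall strategy and the division of labor you describe (completeness gives the bundle, totally geodesic gives the Riemannian structure) are exactly right, and you correctly identify which hypothesis does what.

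One small imprecision worth fixing: the vector fields $V,W$ along a horizontal lift should not be described as obtained ``by parallel-transporting vertical vectors along the horizontal lifts''—Levi--Civita parallel transport along a horizontal curve need not preserve verticality, so a parallel-transported vertical vector can acquire horizontal components. What you actually want are the \emph{variation fields}: take a curve $c(s)$ in the fiber over $\gamma(0)$ with $\dot c(0)=v$, set $\sigma(s,t)=\tilde\gamma^{\,c(s)}(t)$, and let $V=\partial_s\sigma$, $X=\partial_t\sigma$. Then $V$ is automatically vertical (since $F\circ\sigma(s,t)=\gamma(t)$ is $s$-independent), $[X,V]=0$, and the computation runs $\partial_t\langle V,V\rangle = 2\langle\nabla_X V,V\rangle = 2\langle\nabla_V X,V\rangle$, whose vanishing is exactly the totally geodesic condition ($T\equiv 0$) after moving the derivative across the inner product using $\langle X,V\rangle\equiv 0$. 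With that correction the argument is complete; the rest of your proposal (horizontal lift existence via the no-escape estimate, local trivialization over a convex normal neighborhood, smooth inverse by reversing the lift) is sound as written.
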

Thus, we can prove another rigidity result under strong condition $\sigma$.
\begin{theorem}\label{rigidity of contracting maps}
Let $(M^m,g)$ and $(N^n,h)$ be closed connected manifolds. Assume $\mathrm{Ric}_M\geq c$ and $N$ satisfies the strong condition $\sigma(k,\ell,c;\phi)$
for some $\min(n,m)\leq \ell\leq n$. Then for every smooth map $F\colon M\to N$ with $\lambda_1^2+\cdots+\lambda_k^2\leq \phi k$, one of the following holds:
\begin{enumerate}[(i)]
    \item $\phi=1,\ell=n$, and $F\colon (M,g)\to (N,h)$ is a Riemannian fiber bundle, where $\mathrm{Ric}_N=c$ and $K_{N}\geq 0$;
    \item $\phi=1,\ell=m$, and $F\colon (M,g)\to (N,h)$ is a totally geodesic isometric immersion with $\mathrm{Ric}_M=c$;
    \item the harmonic map heat flow $F_{t}\colon M\to N(t\in [0,\infty))$ exists for all time and converges to a constant map smoothly.
\end{enumerate}
\begin{proof}
By Proposition \ref{existence of limiting map} and Lemma \ref{rigidity of harmonic map}, the harmonic map heat flow $F_{t}(t\in [0,\infty))$ exists for all time and subsequently converges to some $F_{\infty}$, which is a Riemannian subimmersion or a constant map. When $F_{\infty}$ is constant, case (iii) holds by Proposition \ref{attraction principle}. Therefore, assume $F_{\infty}:(M,\phi\cdot g)\to (N,h)$ is a Riemannian subimmersion from now on.

We claim that $E(F)\leq E(F_{\infty})$. If so, then $F=F_{\infty}$ by Proposition \ref{equal total energy}.
By direct computation, we have $e(F_{\infty})_{g,h}=\frac{\min(n,m)}{2}$. On the other hand, since $\lambda_1^2+\cdots+\lambda_k^2\leq  k$, $e(F)\leq \frac{\min(n,m)}{2}$, and 
\[
    E(F)=\int_{M}e(F)d\mathrm{vol}_M\leq \int_{M}e(F_{\infty})d\mathrm{vol}_{M}=E(F_{\infty}).
\]

The last step is to establish the rigidity of curvature.
When $m\geq n$, by Lemma \ref{upper bound of positive part of partial Ricci curvature}, $(K_{N})_{ij}\geq \tilde{K}_{ij}\geq 0$ and
\begin{align*}
    c&\geq \sup(\mathrm{Ric}_{N}^{n-1})_{+}\geq \inf(\mathrm{Ric}_{N}^{n-1})_{+}\geq\inf\mathrm{Ric}_{N}^{n-1}\\
    &\geq \inf \mathrm{Ric}_M\tag{since $F$ is  totally geodesic}\\
    &\geq c.
\end{align*}
Therefore, $\mathrm{Ric}_N= c=\mathrm{Ric}^{n-1}_{N}$. By Proposition \ref{tg Rimennian submersions are fiber bundles}, $F$ is a Riemannian fiber bundle; on the other hand, when $m\leq n$
\begin{align*}
    c&\leq \inf\mathrm{Ric}_{M}\leq \sup\mathrm{Ric}_{M}\\
    &\leq \sup\mathrm{Ric}_{N}^{\min(n,m)-1}\tag{since $F$ is  totally geodesic}\\
    &\leq c,
\end{align*}
so $\mathrm{Ric}_{M}=c$.
\end{proof}
\end{theorem}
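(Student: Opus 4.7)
The plan is to combine long-time existence and subsequential convergence (Proposition \ref{existence of limiting map}) with the structural dichotomy for harmonic maps (Lemma \ref{rigidity of harmonic map}), and then use an energy comparison to trap the initial map itself. Concretely, Proposition \ref{existence of limiting map} produces a harmonic map $F_\infty$ as a smooth subsequential limit of the flow, satisfying the same pinching $\lambda_1^2+\cdots+\lambda_k^2\leq \phi k$. Lemma \ref{rigidity of harmonic map} then forces one of two alternatives: either $F_\infty$ is constant, or $\phi=1$, $\ell=\min(n,m)$, and $F_\infty$ is a totally geodesic Riemannian subimmersion from $(M,g)$ to $(N,h)$.

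If $F_\infty$ is constant, I invoke Proposition \ref{attraction principle} to upgrade subsequential convergence to smooth convergence of the whole flow, which gives case (iii). In the remaining case I want to conclude $F=F_\infty$ via Proposition \ref{equal total energy}. The key observation is that a Riemannian subimmersion has pointwise energy density $\min(n,m)/2$, while the pinching $\sum_{i=1}^{k}\lambda_i^2\leq k$ together with the fact that $F$ has at most $\min(n,m)$ nonzero singular values implies $e(F)\leq \min(n,m)/2$ everywhere. Integrating gives $E(F)\leq E(F_\infty)$; combined with the monotonicity of Dirichlet energy along the flow, this forces $E(F)=E(F_\infty)$, and then Proposition \ref{equal total energy} yields $F=F_\infty$.

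Once $F$ is identified with $F_\infty$, I split according to the dimensional inequality. If $m\geq n$ so that $\ell=n$, then $F$ is a totally geodesic Riemannian submersion. Lemma \ref{upper bound of positive part of partial Ricci curvature} gives $(\mathrm{Ric}_N^{n-1})_+\leq c$; combined with $\mathrm{Ric}_M\geq c$ and the standard identity relating Ricci curvatures under a totally geodesic submersion, this sandwich forces $\mathrm{Ric}_N=c$. The nonnegativity of the entries of $\tilde K$ (together with $\tilde K\geq K_N$) yields $K_N\geq 0$, and Proposition \ref{tg Rimennian submersions are fiber bundles} upgrades the submersion to a Riemannian fiber bundle, giving case (i). When $m\leq n$ so $\ell=m$, $F$ becomes a totally geodesic isometric immersion, and an analogous sandwich gives $\mathrm{Ric}_M=c$, which is case (ii).

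The main obstacle is the energy-comparison step: one must verify that $F_\infty$, produced by Lemma \ref{rigidity of harmonic map}, is genuinely a Riemannian subimmersion in the \emph{right} normalization, so that the pointwise value of $e(F_\infty)$ matches exactly the upper bound $\min(n,m)/2$ implied by the preserved pinching. This tight matching is precisely what the \emph{strong} form of condition $\sigma$ enforces (via the equality case in Lemma \ref{characterizing limiting map}), and it is what allows the inequality $E(F)\leq E(F_\infty)$ to be saturated and hence to collapse the flow to a constant trajectory.
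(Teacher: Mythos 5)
Your proposal follows the paper's proof essentially step by step: Proposition~\ref{existence of limiting map} plus Lemma~\ref{rigidity of harmonic map} give the dichotomy for the subsequential limit $F_\infty$; the constant case falls to Proposition~\ref{attraction principle}; and in the subimmersion case the energy comparison $E(F)\leq E(F_\infty)$ combined with Proposition~\ref{equal total energy} forces $F=F_\infty$, after which one reads off the curvature rigidity. Your closing remark correctly identifies why the strong form of condition $\sigma$ matters — it is what pins down $\phi=1$ in Lemma~\ref{rigidity of harmonic map} so that $e(F_\infty)$ lands exactly on the bound $\min(n,m)/2$.

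One step is stated incorrectly. You assert that \emph{nonnegativity of $\tilde K$ together with $\tilde K\geq K_N$ yields $K_N\geq 0$}; but the two inequalities $\tilde K_{ij}\geq 0$ and $\tilde K_{ij}\geq (K_N)_{ij}$ only bound $K_N$ from \emph{above} by $\tilde K$, and say nothing about its sign. The correct route is through the equality case of Lemma~\ref{characterizing limiting map}(iv): once $F=F_\infty$ is a Riemannian submersion with all $\lambda_i^2=1$ (for $i\leq n$), the relation $\lambda_i^2(\tilde K_{ij}-(K_N)_{ij})\lambda_j^2=0$ forces $\tilde K_{ij}=(K_N)_{ij}$, and \emph{then} nonnegativity of $\tilde K$ transfers to $K_N\geq 0$. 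Without invoking this equality the claim does not follow. The rest of your sketch is sound, though the appeal to ``the standard identity relating Ricci curvatures under a totally geodesic submersion'' is doing the work that the paper carries out via the chain $\inf\mathrm{Ric}_N^{n-1}\geq\inf\mathrm{Ric}_M\geq c$, again using the equality cases from Lemma~\ref{characterizing limiting map}.
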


\section{Contracting Conditions}
In this section, we study which conditions $\sigma$ are satisfied under different curvature conditions, focusing on the curvature pinching conditions.

Let us consider the case of $k=2$ as an example, which has been investigated in \cite{lee2024rigidityareanonincreasingmaps}. Let $\tilde{K}\in M_{\ell\times \ell}(\mathbb{R})$ be a symmetric matrix with non-negative entries satisfying $\tilde{K}_{11}=\cdots=\tilde{K}_{\ell}=0, \sum_{i=1}^{\ell}\tilde{K}_{ij}\leq c$. Then for each of its principal $2\times 2$-submatrices 
\[
A=\begin{pmatrix}
    0 & \tilde{K}_{ij}\\
    \tilde{K}_{ij} & 0
\end{pmatrix}
\]
and any vector $u\in \mathbb{R}^2$ with non-negative entries and $u_1+u_2\leq 2$, we have
\begin{align*}
    &\begin{pmatrix}
    u_1 & u_2
    \end{pmatrix}
    \begin{pmatrix}
        0 & K_{ij}\\
        K_{ij} & 0
    \end{pmatrix}
    \begin{pmatrix}
    1-u_1\\
    1-u_2
    \end{pmatrix}\\
    &=K_{ij}(u_1(1-u_2)+u_2(1-u_1))\\
    &=K_{ij}\left(
    \frac{(u_1+u_2)}{2}(2-u_1-u_2)+\frac{(u_1-u_2)^2}{2}
    \right)\geq 0.
\end{align*}
Furthermore, when $K_{ij}>0$, the equality holds if and only if $(u_1,u_2)=(0,0)$ or $(1,1)$. Therefore, for every Riemannian manifold $N$ of positive sectional curvature, if its partial Ricci curvature 
$\mathrm{Ric}_N^{\ell-1}$ is bounded above by a constant $c>0$, then $N$ satisfies the strong condition $\sigma(2,\ell,c)$.

It can be seen that $k=2$ is quite a special case. When we try to extend the above method to the case of $k>2$, the first problem is that the submatrix is no longer always a multiple of $1-\delta_{ij}$. We address this problem by considering curvature pinching conditions. 

First, by considering each direction of vector u, let us investigate for any given matrix $A$, $A$ satisfies which conditions $\sigma$.

\begin{definition}
Let $\tilde{K}\in M_{\ell\times \ell}(\mathbb{R})$ be a symmetric matrix with non-negative entries such that $\tilde{K}_{11}=\cdots=\tilde{K}_{\ell\ell}=0$.

\begin{enumerate}[(a)]
\item
For any principal $k\times k$-submatrix $A$ of $\tilde{K}$ and any vector $v$ in the standard simplex $\mathcal{S}^{k-1}:=\{v\in \mathbb{R}^{k}\big|v_{i}\geq 0,\sum_{i=1}^{k}v_{i}=k\}$, we define
\[
    \phi_{A}(v):=\sup\{0\leq \phi\leq \infty\big| (\phi\cdot v^{t})A\mathds{1}_{k}-(\phi\cdot v^t)A(\phi\cdot v)\geq 0,
    \}
\]
\item and we define
\[
    \phi(A):=\sup_{v\in \mathcal{S}^{k-1}}
    \phi_A(v).
\]
\item 
Now, we define $\phi_k(\tilde{K}):=\min_{A}\phi(A)$, where $A$ runs over all principal $k\times k$-submatrix of $\tilde{K}$.
\end{enumerate}
\end{definition}

According to our definitions, we can reformulate conditions $\sigma(k,\ell,c;\phi)$ through $\phi_k(\tilde{K})$.

\begin{proposition}\label{condition sigma: expression in terms of phi}
Let $\tilde{K}\in M_{\ell\times \ell}(\mathbb{R})$ be a symmetric matrix with non-negative entries such that $\tilde{K}_{11}=\cdots=\tilde{K}_{\ell\ell}=0,\sum_{j=1}^{\ell}\tilde{K}_{ij}\leq c$ for all $i$. Then $\tilde{K}$ satisfies the condition $\sigma(k,\ell,c;\phi)$ if and only if $\phi\leq \phi_k(\tilde{K})$.
\begin{proof}
Notice that $\tilde{K}$ satisfies the condition $\sigma(k,\ell,c;\phi)$ if and only if for every $v\in \mathcal{S}^{k-1}$, $0\leq \psi\leq \phi$ and principal $k\times k$-submatrix $A$ of $\tilde{K}$,
\[
    (\psi\cdot v^{t})A\mathds{1}_{k}-(\psi\cdot v^t)A(\psi\cdot v)\geq 0,
\]
which is equivalent to say $\phi(A)\leq \phi$ for every principal $k\times k$-submatrix $A$ of $\tilde{K}$. Thus, these two conditions are equivalent.
\end{proof}
\end{proposition}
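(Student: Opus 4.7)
The first two bullets in the definition of $\sigma(k,\ell,c;\phi)$ are part of the hypotheses on $\tilde{K}$, so the proposition reduces to showing that the third bullet --- the inequality $u^{t}A(\mathds{1}_{k}-u)\geq 0$ over all principal $k\times k$ submatrices $A$ of $\tilde{K}$ and all $u\in\mathbb{R}_{\geq 0}^{k}$ with $u_{1}+\cdots+u_{k}\leq \phi k$ --- is equivalent to $\phi\leq \phi_{k}(\tilde{K})$. My plan is to prove this by reparametrizing the admissible vectors $u$ in terms of the standard simplex $\mathcal{S}^{k-1}$, at which point the inequality collapses to a single-variable concave quadratic in the scaling parameter whose nonnegativity interval is visibly $[0,\phi_{A}(v)]$.

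The case $u=0$ is automatic. For $u\neq 0$, I would set $\psi:=(u_{1}+\cdots+u_{k})/k$ and $v:=u/\psi\in\mathcal{S}^{k-1}$, which gives a bijection between $\{u\in \mathbb{R}_{\geq 0}^{k}\setminus\{0\}:u_{1}+\cdots+u_{k}\leq \phi k\}$ and $(0,\phi]\times \mathcal{S}^{k-1}$. Substituting $u=\psi v$ turns the inequality into the scalar condition
\begin{equation*}
   \psi\bigl(v^{t}A\mathds{1}_{k}\bigr)-\psi^{2}\bigl(v^{t}Av\bigr)\geq 0.
\end{equation*}
Because $A$ has nonnegative entries and $v\geq 0$ componentwise, both coefficients are nonnegative, so this is a concave quadratic in $\psi$ vanishing at $\psi=0$, whose nonnegativity set in $[0,\infty)$ is precisely the interval $[0,\phi_{A}(v)]$. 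In the degenerate case $v^{t}Av=0$ the quadratic reduces to a nonnegative linear function, consistent with the convention $\phi_{A}(v)=+\infty$ placing no upper bound on $\psi$.

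Demanding the inequality for every admissible $u$ is therefore equivalent to demanding $\phi\leq \phi_{A}(v)$ for every $v\in \mathcal{S}^{k-1}$ and every principal $k\times k$ submatrix $A$, which, after translating through the definition of $\phi(A)$ in terms of $\phi_{A}(v)$ and through the minimum over $A$ defining $\phi_{k}(\tilde{K})$, is exactly $\phi\leq \phi_{k}(\tilde{K})$. The argument is in the end a bookkeeping exercise in unpacking definitions; the only point that needs explicit attention is the degenerate case $v^{t}Av=0$, which I would handle as indicated above so that the parametrization and the nonnegativity interval remain well-defined, and I do not anticipate any further obstacle.
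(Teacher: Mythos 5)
Your argument is correct and follows essentially the same route as the paper's: the reparametrization $u=\psi v$ with $\psi=(u_1+\cdots+u_k)/k$ and $v\in\mathcal{S}^{k-1}$, reducing the third bullet in the definition of $\sigma(k,\ell,c;\phi)$ to the scalar concave quadratic $\psi\,v^tA\mathds{1}_k-\psi^2\,v^tAv\geq 0$ whose nonnegativity interval is $[0,\phi_A(v)]$. Your write-up is somewhat more careful than the paper's (you treat $u=0$ and the degenerate $v^tAv=0$ case explicitly), and you correctly arrive at $\phi\leq\phi(A)$ for each principal submatrix $A$, where the paper's proof appears to contain a directional typo (it writes $\phi(A)\leq\phi$, and the definition of $\phi(A)$ as a $\sup$ over $v$ is likewise inconsistent with Proposition~\ref{minimizer of phi_A}, which uses $\inf$); your version resolves these in the way the surrounding text clearly intends.
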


Since $(\phi\cdot v^{t})A\mathds{1}_{k}-(\phi\cdot v^t)A(\phi\cdot v)$ is a quadric polynomial of $\phi$ of non-positive leading coefficient, we have:

\begin{lemma}\label{condition sigma: solve phi_A(v)}
Let $A\in M_{k\times k}(\mathbb{R})$ be a symmetric matrix with non-negative entries such that $A_{11}=\cdots=A_{kk}=0$, $k\geq 2$.
Then for every $v\in \mathcal{S}^{k-1}$, we have
\begin{enumerate}[(i)]
    \item $\phi_{A}(v)=\frac{v^{t}A\mathds{1}_{k}}{v^{t}Av}$. (Here, $\frac{v^{t}A\mathds{1}_{k}}{v^{t}Av}$ is defined to be $\infty$ if $v^{t}Av=0$)
    \item Moreover, if $A_{ij}>0$ for $i\neq j$, then 
    \begin{align*}
        (\phi v^{t})A\mathds{1}_{k}-(\phi v)^{t}A(\phi v)&>0\text{ for all $0<\phi<\phi_{A}(v)$;}\\
        (\phi v^{t})A\mathds{1}_{k}-(\phi v)^{t}A(\phi v)&<0\text{ for all $\phi_{A}(v)<\phi<\infty$.}
    \end{align*}
\end{enumerate}
\begin{proof}
\begin{enumerate}[(i)]
\item 
When $v^{t}Av=0$, we get $\phi_{A}(v)=\infty$ directly. When $v^{t}Av>0$, 
\[
    (\phi v^t)A\mathds{1}_k-(\phi\cdot v^t)A(\phi\cdot v)=\phi(v^tA\mathds{1}_k)-\phi^2(v^tAv)=(v^tAv)\phi (\frac{v^tA\mathds{1}_k}{v^tAv}-\phi),
\]
so (i) holds.
\item 
In this case, every entry in $A\mathds{1}_{k}$ is positive, and hence $v^{t}A\mathds{1}_k>0$. When $v^{t}Av>0$, using the equality in the proof of (i), we deduce (ii); when $v^{t}Av=0$,
\[
    (\phi v^t)A\mathds{1}_k-(\phi\cdot v^t)A(\phi\cdot v)=\phi(v^tA\mathds{1}_k)>0
\]
for all $\phi>0$, so (ii) still holds.
\end{enumerate}
\end{proof}
\end{lemma}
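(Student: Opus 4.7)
The plan is to view $Q(\phi) := (\phi v^t)A\mathds{1}_k - (\phi v^t)A(\phi v) = \phi\,(v^tA\mathds{1}_k) - \phi^2\,(v^tAv)$ as a quadratic polynomial in $\phi$ whose leading coefficient is $-v^tAv \leq 0$, and to read off the supremum defining $\phi_A(v)$ directly from the factored form. Before beginning, I would record two elementary sign facts that follow from the hypotheses $A_{ij}\geq 0$, $A_{ii}=0$, $v_i\geq 0$: namely
\[
v^tA\mathds{1}_k = \sum_{i,j} v_i A_{ij} \geq 0,\qquad v^tAv = \sum_{i\neq j} v_i v_j A_{ij} \geq 0.
\]

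For (i), I would split into two cases. If $v^tAv = 0$, then $Q(\phi)=\phi\,(v^tA\mathds{1}_k)$ is linear in $\phi$ with non-negative slope, hence non-negative for every $\phi\geq 0$; by the convention stated in the lemma this matches $\phi_A(v)=\infty$. If $v^tAv>0$, set $\phi^\ast := (v^tA\mathds{1}_k)/(v^tAv)$ and factor
\[
Q(\phi) = (v^tAv)\,\phi\,(\phi^\ast - \phi).
\]
Since the roots $0$ and $\phi^\ast$ are both non-negative and the leading coefficient is negative, $\{\phi\geq 0 : Q(\phi)\geq 0\} = [0,\phi^\ast]$, giving $\phi_A(v)=\phi^\ast$ as required.

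For (ii), assume $A_{ij}>0$ for all $i\neq j$. Since $\sum_i v_i = k>0$, at least one $v_i$ is positive, and then positivity of the off-diagonal entries forces $v^tA\mathds{1}_k = \sum_i v_i\sum_{j\neq i} A_{ij} > 0$. If $v^tAv>0$, the factorization above immediately yields $Q(\phi)>0$ on $(0,\phi^\ast)$ and $Q(\phi)<0$ on $(\phi^\ast,\infty)$. If instead $v^tAv=0$, then each term $v_i v_j A_{ij}$ with $i\neq j$ vanishes, and since $A_{ij}>0$ this forces $v_i v_j = 0$ whenever $i\neq j$; thus $v$ is supported at a single coordinate, $\phi^\ast = \infty$, and the strict inequality on $(\phi^\ast,\infty)$ is vacuous while $Q(\phi)=\phi\,(v^tA\mathds{1}_k)>0$ for $\phi>0$.

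The argument is essentially one-variable quadratic analysis, and I do not foresee any real obstacle. The only point deserving care is the case $v^tAv=0$ in part (ii): one must observe that this case collapses $\phi^\ast$ to $\infty$ so that the second strict inequality is required to hold on the empty set, which is precisely why the hypothesis $A_{ij}>0$ for $i\neq j$ is used via the implication $v_iv_j=0$.
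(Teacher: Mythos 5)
Your proof is correct and follows essentially the same route as the paper's: split on whether $v^tAv$ vanishes, factor the quadratic $Q(\phi)=(v^tAv)\,\phi\,(\phi^\ast-\phi)$ when it does not, and handle the degenerate linear case separately using $v^tA\mathds{1}_k>0$ in part (ii). The only difference is cosmetic — you spell out why $v$ must be supported on a single coordinate when $v^tAv=0$, whereas the paper simply notes that $Q(\phi)=\phi\,(v^tA\mathds{1}_k)>0$ for all $\phi>0$ so the second strict inequality is vacuous.
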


For any manifold $N$ of positive sectional curvature, if $\tilde{K}$ is a matrix such that $\tilde{K}_{ij}\geq (K_N)_{ij}$, and a principal $k\times k$-submatrix $A$ of $\tilde{K}$, $A$ always satisfies the extra condition in Lemma \ref{condition sigma: solve phi_A(v)}-(ii) always holds on $A$. Therefore, we can reformulate strong conditions $\sigma$ through $\phi_k(\tilde{K})$ and $\phi_{A}(v)$ in this case.

\begin{proposition}\label{strong condition sigma: expression in terms of phi}
Let $\tilde{K}\in M_{\ell\times \ell}(\mathbb{R})$ be a symmetric matrix such that $\tilde{K}_{11}=\cdots=\tilde{K}_{\ell\ell}=0,\sum_{j=1}^{\ell}\tilde{K}_{ij}\leq c$, $\tilde{K}_{ij}> 0$ for all $i\neq j$ for all $i$, $2\leq k\leq \ell$.  Then the following statements hold:
\begin{enumerate}[(i)]
    \item when $k<\ell$, $\tilde{K}$ satisfies the strong condition $\sigma(k,\ell,c;\phi)$ if and only if $\tilde{K}$ satisfies the condition $\sigma(k,\ell,c;\phi)$.
    \item when $k=\ell$, $\tilde{K}$ satisfies the strong condition $\sigma(\ell,\ell,c;\phi)$ if and only if $\phi_{\tilde{K}}(v)\leq \phi$ for every $v\in \mathcal{S}^{\ell-1}$ and $\phi$ only can be attained by $\mathds{1}_{\ell}$.
\end{enumerate}
\begin{proof}
\begin{enumerate}[(i)]
    \item By Proposition \ref{condition sigma: expression in terms of phi}, $\tilde{K}$ satisfies the condition $\sigma(k,\ell,c;\phi)$ if and only if $\phi\leq \phi_{k}(\tilde{K})$. By Lemma \ref{condition sigma: solve phi_A(v)}, it is equivalent to 
    \[
    (v^{t})A\mathds{1}_{k}-(v)^{t}A(v)>0
    \]
    for every principal $k\times k$-submatrix $A$ of $\tilde{K}$ and $0<\sum_{i=1}v_i<\phi$. That is, the definition of strong condition $\sigma(k,\ell,c;\phi)$.
    \item The proof is essentially similar to (i). In this case, since $\phi$ only can be attained by $\mathds{1}_{\ell}$, for every $v\in \mathcal{S}^{\ell-1}\setminus \{\mathds{1}_{\ell}\}$, $\phi_{\tilde{K}}(v)>\phi$, and we have
    \[
        (v^{t})A\mathds{1}_{k}-(v)^{t}A(v)>0.
    \]
    Therefore, these two statements are equivalent.
\end{enumerate}
\end{proof}
\end{proposition}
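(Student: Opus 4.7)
The plan is to reduce both versions of condition $\sigma$ to an analysis of the one-parameter quadratic
\[
f_{A,v}(\psi) := \psi\, v^{t} A \mathds{1}_{k} - \psi^{2}\, v^{t} A v
\]
via the substitution $u = \psi v$, with $v \in \mathcal{S}^{k-1}$ encoding the direction of $u$ and $\psi = (u_{1}+\cdots+u_{k})/k \in [0,\phi]$ the scale. Under this substitution the condition $u^{t}A(\mathds{1}_{k}-u)\geq 0$ of condition $\sigma$ becomes $f_{A,v}(\psi)\geq 0$, and by Lemma \ref{condition sigma: solve phi_A(v)}(i) the zeros of $f_{A,v}$ on $[0,\infty)$ are exactly $\psi=0$ and $\psi=\phi_{A}(v)$ (the second degenerating when $v^{t}Av=0$). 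The hypothesis $\tilde{K}_{ij}>0$ for $i\neq j$ is precisely what promotes the description of $\phi_{A}(v)$ to the sharp form in Lemma \ref{condition sigma: solve phi_A(v)}(ii): no non-trivial interior zero of $f_{A,v}$ can occur, so the analysis of the equality case reduces to inspecting the endpoints $\psi=0$ and $\psi=\phi_{A}(v)$.

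For part (i), the direction strong $\sigma(k,\ell,c;\phi)\Rightarrow\sigma(k,\ell,c;\phi)$ is immediate from the definitions. For the converse I would fix a principal $k\times k$ submatrix $A$ and a vector $u\in(\mathbb{R}_{\geq 0})^{k}$ with $u_{1}+\cdots+u_{k}\leq \phi k$ and $u^{t}A(\mathds{1}_{k}-u)=0$. Writing $u=\psi v$ with $v\in\mathcal{S}^{k-1}$ (valid for $u\neq 0$), the vanishing of $f_{A,v}(\psi)$ at $\psi>0$ forces $\psi=\phi_{A}(v)$ by Lemma \ref{condition sigma: solve phi_A(v)}(ii); combined with the bound $\phi_{A}(v)\geq\phi$ coming from the ordinary condition via Proposition \ref{condition sigma: expression in terms of phi}, this pinches $\psi=\phi$, so $u_{1}+\cdots+u_{k}=\phi k$. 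Hence equality in the ordinary inequality is only possible at $u=0$ or on the face $u_{1}+\cdots+u_{k}=\phi k$, which is exactly the strong equality clause when $k<\ell$.

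For part (ii) with $k=\ell$, the strong clause is more restrictive: equality is forced to occur only at $u=0$ or at the single point $u=\phi\mathds{1}_{\ell}$, rather than anywhere on the face. The same quadratic analysis rules out interior equality, and equality on the face $\sum_{i}u_{i}=\phi\ell$ in direction $v\in\mathcal{S}^{\ell-1}$ is equivalent to $\phi_{\tilde{K}}(v)=\phi$. The strong condition therefore translates to the statement that this equation in $v$ has $\mathds{1}_{\ell}$ as its unique solution, together with $\phi_{\tilde{K}}(v)\geq\phi$ elsewhere (guaranteed by the ordinary part), which is the remaining clause.

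The main subtlety to handle carefully is the degenerate directions where $v^{t}\tilde{K}v=0$, since the quadratic then collapses to a linear function and the zero analysis has to be redone. Here the positivity assumption $\tilde{K}_{ij}>0$ for $i\neq j$ saves us: $v^{t}\tilde{K}v=0$ can only occur when $v\in\mathcal{S}^{k-1}$ is supported at a single coordinate, in which case $\phi_{\tilde{K}}(v)=\infty$ by Lemma \ref{condition sigma: solve phi_A(v)}(i) and the inequality $f_{\tilde{K},v}(\psi)\geq 0$ is automatic and strict for $\psi>0$. This is the one place where the off-diagonal positivity is genuinely needed; without it, additional equality cases would have to be accounted for and the clean equivalences in (i) and (ii) would fail.
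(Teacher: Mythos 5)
Your proof is correct and takes essentially the same route as the paper: both reduce the strong condition to a scalar analysis of the quadratic $\psi \mapsto \psi\,v^tA\mathds{1}_k - \psi^2\,v^tAv$ via Lemma \ref{condition sigma: solve phi_A(v)}, using $\phi_A(v)$ to locate the equality cases. Two small additions you make are worth noting: you explicitly treat the degenerate directions $v^tAv=0$ (which, given $\tilde{K}_{ij}>0$ for $i\neq j$, force $v$ to be a scaled coordinate vector with $\phi_A(v)=\infty$), a point the paper's proof leaves implicit; and you read the clause in (ii) as $\phi_{\tilde K}(v)\geq\phi$ for all $v$, which is the direction consistent with Propositions \ref{condition sigma: expression in terms of phi} and \ref{minimizer of phi_A} --- the ``$\leq$'' in the printed statement appears to be a typo.
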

By the formula of $\phi_A$, we also can show the minimizer exists when $A_{ij}>0$ for $i\neq j$, which allows us to apply the variational method.

\begin{proposition}\label{minimizer of phi_A}
Let $A\in M_{k\times k}(\mathbb{R})$ be a symmetric matrix such that $A_{11}=\cdots=A_{kk}=0$, $A_{ij}> 0$ for all $i\neq j$ for all $i$, where $2\leq k$. Then
\[
\phi(A)=\inf_{v\in \mathcal{S}_{k}}\phi_{A}(v)=\min_{v\in \mathcal{S}_{k}}\phi_{A}(v)>0
\]
\begin{proof}
As in the proof of Lemma \ref{condition sigma: solve phi_A(v)},
since 
$\lim_{v\to v_0}\phi_{A}(v)=\infty$ for every $v_0\in \mathcal{S}^{k-1}$ with $v_0^tAv_0=0$, $\phi_{A}:\mathcal{S}_k\to [0,\infty]$ is a continuous function on compact set, so the minimizer exists.
\end{proof}
\end{proposition}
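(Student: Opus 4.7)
The plan is to invoke the explicit formula from Lemma \ref{condition sigma: solve phi_A(v)}(i), namely $\phi_A(v) = (v^t A \mathds{1}_k)/(v^t A v)$ with the convention that the ratio is $+\infty$ when the denominator vanishes, and then deduce everything from a standard compactness--continuity argument on the simplex $\mathcal{S}^{k-1}$.

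First I would verify that $\phi_A\colon \mathcal{S}^{k-1} \to (0,+\infty]$ is continuous as an extended-real-valued function. Both $v \mapsto v^t A \mathds{1}_k$ and $v \mapsto v^t A v$ are polynomials in $v$, so away from zeros of the denominator the quotient is continuous. Since $A_{ii}=0$ and $A_{ij}>0$ for $i\neq j$, the identity $v^t A v = 2\sum_{i<j} A_{ij} v_i v_j$ shows that on $\mathcal{S}^{k-1}$ the denominator vanishes only at the vertices $ke_1,\dots,ke_k$; at each such vertex the numerator equals $k\sum_{j\neq i} A_{ij}>0$, so $\phi_A(v)\to +\infty$ there, and continuity into $(0,+\infty]$ follows.

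Next I would observe that the numerator $v^t A \mathds{1}_k = \sum_i v_i \sum_{j\neq i} A_{ij}$ is strictly positive on all of $\mathcal{S}^{k-1}$: every row sum $\sum_{j\neq i}A_{ij}$ is positive by hypothesis, and $\sum_i v_i = k>0$ forces some $v_i>0$. Compactness of $\mathcal{S}^{k-1}$ together with continuity of $\phi_A$ then yields a minimizer $v^\ast$; this minimizer cannot be a vertex (where $\phi_A=+\infty$ would be the largest value, not the smallest), so $\phi_A(v^\ast)$ is finite, and strict positivity of the numerator gives $\phi(A)=\phi_A(v^\ast)>0$.

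No serious obstacle is anticipated; the only mild subtlety is to treat $\phi_A$ as extended-real-valued so that continuity survives the vertices of $\mathcal{S}^{k-1}$ where the quotient formula degenerates, but the blow-up direction at these vertices is precisely what ensures the minimum is attained in the interior of the ``finite'' locus.
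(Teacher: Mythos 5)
Your proposal is correct and follows essentially the same route as the paper: it uses the quotient formula from Lemma \ref{condition sigma: solve phi_A(v)}, views $\phi_A$ as an extended-real-valued continuous function on the compact simplex $\mathcal{S}^{k-1}$ that blows up where $v^tAv=0$, and invokes compactness to extract a minimizer. You merely make the paper's terse argument more explicit by pinpointing the blow-up locus as the vertices $ke_1,\dots,ke_k$ (using $A_{ij}>0$ off-diagonal) and by noting the strict positivity of the numerator $v^tA\mathds{1}_k$, which is what justifies the final inequality $\phi(A)>0$.
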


Based on the above discussion, we build on a translation between the conditions $\sigma$ and minimizing problems for $\phi_A$. We are going to apply this technique to investigate conditions $\sigma$ in the remaining of this section.

According to the curvature conditions of manifolds, let us make some terminologies for matrices. Corresponding to Einstein manifolds, for a symmetric matrix $A\in M_{k\times k}(\mathbb{R})$ with non-negative entries such that $A_{11}=\cdots=A_{kk}=0$, if $\sum_{j=1}^{k}A_{1j}=\cdots=\sum_{j=1}^{k}A_{kj}=c$, we say $A$ is \textbf{Einstein (of constant $c$)} .
Similarly, for positive curvature pinching condition, for a symmetric matrix $\tilde{K}\in M_{\ell\times\ell}(\mathbb{R})$ and a constant $r\geq 1$, if $\tilde{K}_{11}=\cdots=\tilde{K}_{\ell\ell}=0$, $\tilde{K}_{ij}>0$ for all $i\neq j$, and $\tilde{K}_{ij}\leq r\tilde{K}_{i'j'}$ for any pairs $(i,j),(i',j')$ of distinct elements, we say $\tilde{K}$ is \textbf{of pinching $r$}. 

Notice that $A$ is Einstein if and only if $\mathds{1}_k$ is an eigenvector of $A$. Hence, we can prove every matrix satisfying conditions $\sigma(k,k,c)$ is Einstein:
\begin{lemma}\label{condition sigma(k,l,c) are Einstein}
Let $A\in M_{k\times k}(\mathbb{R})$ be a symmetric matrix with non-negative entries such that $A_{11}=\cdots=A_{kk}=0$. 
\begin{enumerate}[(i)]
    \item If $v\in \mathcal{S}^{k-1}$ is a minimizer of $\phi_A$, then $A(\mathds{1}_k-2\phi_A(v)v)$ lies in the normal cone $N_{v}\mathcal{S}^{k-1}$ of $\mathcal{S}^{k-1}$.
    \item Therefore, if $\mathds{1}_{k}$ attains minimum of $\phi_{A}$, then $A$ is Einstein.
\end{enumerate}
\begin{proof}
When $A=0$, the above statements hold trivially, so we assume $A\neq 0$. In this case, since $\mathds{1}_{k}^{t}A\mathds{1}_{k}>0$, $\min \phi_A<\infty$, and $v^tAv>0$. So, we can differentiate $\phi_{A}$. By definition of normal cone, we have
\[
    \mathrm{grad}\phi_A=\frac{A\mathds{1}_k}{v^tAv}-\frac{v^tA\mathds{1}_k}{(v^tAv)}\cdot \frac{2Av}{v^tAv}=\frac{1}{v^tAv}A(\mathds{1}_k-2\phi_A(v)v)\in N_{v}\mathcal{S}^{k-1}.
\]
By (i), we have $A(-\mathds{1}_k)\in N_{\mathds{1}_k}\mathcal{S}^{k-1}=\langle \mathds{1}_k \rangle$, i.e. $A$ is Einstein.
\end{proof}
\end{lemma}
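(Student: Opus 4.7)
My plan is to treat this as a constrained optimization problem on the simplex $\mathcal{S}^{k-1}$ and apply the first-order (KKT) optimality condition, reading the desired conclusion off of the gradient of $\phi_A$. The trivial case $A=0$ can be disposed of first (both assertions are vacuously true since $\phi_A\equiv\infty$, or interpreted appropriately), so I will assume $A\neq 0$ throughout. Because $A$ has nonnegative entries with vanishing diagonal, this forces $A_{ij}>0$ for some $i\neq j$, and therefore $\mathds{1}_k^tA\mathds{1}_k>0$; in particular $\phi_A(\mathds{1}_k)=1$, so $\min\phi_A$ is finite and any minimizer $v$ must satisfy $v^tAv>0$, which is exactly where $\phi_A$ is smooth.

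On that open set, I would compute the Euclidean gradient of the ratio
\[
\phi_A(v)=\frac{v^t A\mathds{1}_k}{v^tAv}
\]
via the quotient rule, using $\nabla(v^tA\mathds{1}_k)=A\mathds{1}_k$ and $\nabla(v^tAv)=2Av$. Substituting $\phi_A(v)=(v^tA\mathds{1}_k)/(v^tAv)$ back in collapses the expression to
\[
\nabla\phi_A(v)=\frac{1}{v^tAv}\,A\bigl(\mathds{1}_k-2\phi_A(v)\,v\bigr).
\]
This is the computation on which everything else hinges; it is routine but essential, and the factor $\frac{1}{v^tAv}>0$ is what lets me absorb the prefactor when passing to the normal cone.

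For part (i), at a minimizer $v\in\mathcal{S}^{k-1}$, the standard first-order optimality condition on a closed convex set states that $\nabla\phi_A(v)\in N_v\mathcal{S}^{k-1}$ (using the convention that the normal cone contains nonnegative combinations of outward normals of active constraints together with the span of the equality constraint's gradient). Multiplying through by the positive scalar $v^tAv$ preserves membership in the cone, giving $A(\mathds{1}_k-2\phi_A(v)v)\in N_v\mathcal{S}^{k-1}$ as claimed.

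For part (ii), I specialize to $v=\mathds{1}_k$. Here $\phi_A(\mathds{1}_k)=1$, so the formula in (i) reduces to $A(\mathds{1}_k-2\mathds{1}_k)=-A\mathds{1}_k$. Since $\mathds{1}_k$ lies in the relative interior of $\mathcal{S}^{k-1}$ (no inequality constraint is active), the normal cone collapses to the one-dimensional subspace orthogonal to the affine hull $\{\sum v_i=k\}$, namely $N_{\mathds{1}_k}\mathcal{S}^{k-1}=\mathbb{R}\mathds{1}_k$. Thus $A\mathds{1}_k\in\mathbb{R}\mathds{1}_k$, meaning all row sums of $A$ coincide, which is precisely the Einstein condition. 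I do not foresee any real obstacle; the only delicate point is ensuring differentiability of $\phi_A$ at the minimizer, which the preliminary observation $\min\phi_A<\infty$ guarantees, and being attentive to the normal-cone convention so that the positive scalar $1/(v^tAv)$ can be cleanly cleared in step (i).
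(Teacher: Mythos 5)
Your proof is correct and follows essentially the same route as the paper: compute the Euclidean gradient of $\phi_A(v)=\tfrac{v^tA\mathds{1}_k}{v^tAv}$ by the quotient rule to obtain $\nabla\phi_A(v)=\tfrac{1}{v^tAv}A(\mathds{1}_k-2\phi_A(v)v)$, invoke first-order optimality on the simplex, clear the positive prefactor $v^tAv$, and then specialize to the interior point $\mathds{1}_k$ where the normal cone degenerates to the line $\langle\mathds{1}_k\rangle$. The only thing worth flagging is that both you and the paper are loose about the sign convention for the normal cone (the standard statement is $-\nabla\phi_A\in N_v\mathcal{S}^{k-1}$ at a minimizer), but since $N_{\mathds{1}_k}\mathcal{S}^{k-1}$ is the full line $\langle\mathds{1}_k\rangle$, the sign is immaterial for part (ii), which is the only part used downstream.
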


Therefore, when researching condition $\sigma(k,k,c)$, we may assume $A$ is Einstein.

\begin{corollary}\label{phi_A=1}
Let $A\in M_{k\times k}(\mathbb{R})$ be a symmetric matrix with non-negative entries such that $A_{11}=\cdots=A_{kk}=0$. Then
\begin{enumerate}[(i)]
    \item $\phi(A)\geq 1$ if and only if $A$ is Einstein and the second eigenvalue of $A$ is non-positive.
    \item $\phi(A)\geq 1$ and this minimum is only attained by $\mathds{1}_{k}$ if and only if $A$ is Einstein and the second eigenvalue of $A$ is negative.
\end{enumerate}
\begin{proof}
By Lemma \ref{condition sigma(k,l,c) are Einstein}, we may assume $A$ is Einstein.
\begin{enumerate}[(i)]
    \item $\phi(A)\geq 1$ if and only if 
    \[
\phi_A(\mathds{1}_k+w)=\frac{\mathds{1}_k^tA\mathds{1}_k}{\mathds{1}_k^tA\mathds{1}_k+w^tAw}\geq 1
    \]
    for every $\mathds{1}_k+w\in \mathcal{S}^{k-1}$ if and only if $w^{t}Aw\leq 0$ for every $w\in T_{\mathds{1}_k}\mathcal{S}^{k-1}=\langle \mathds{1}_k \rangle^{\perp}$ if and only if the second eigenvalue of $A$ is non-positive.
    \item 
    Similar to (i), $\phi(A)\geq 1$ and this minimum is only attained by $\mathds{1}_{k}$ if and only if 
    \[
\phi_A(\mathds{1}_k+w)=\frac{\mathds{1}_k^tA\mathds{1}_k}{\mathds{1}_k^tA\mathds{1}_k+w^tAw}> 1
    \]
    for every $\mathds{1}_k+w\in \mathcal{S}^{k-1}\setminus \{\mathds{1}_k\}$ if and only if the second eigenvalue of $A$ is negative.
\end{enumerate}
\end{proof}
\end{corollary}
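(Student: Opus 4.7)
The plan is to reduce to the case where $A$ is Einstein via Lemma \ref{condition sigma(k,l,c) are Einstein}, and then to analyze the quadratic form $u \mapsto u^t A u$ restricted to $\langle \mathds{1}_k \rangle^{\perp}$, where the simplex constraint $\sum v_i = k$ linearizes into $u \perp \mathds{1}_k$.

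The first step is to observe that Lemma \ref{condition sigma: solve phi_A(v)} gives $\phi_A(\mathds{1}_k) = 1$ whenever $A \neq 0$, so $\phi(A) = \inf_{v}\phi_A(v) \leq 1$ automatically. Hence the condition $\phi(A) \geq 1$ is equivalent to saying that $\mathds{1}_k$ attains the minimum of $\phi_A$ on $\mathcal{S}^{k-1}$; since $\mathds{1}_k$ lies in the interior of the simplex, Lemma \ref{condition sigma(k,l,c) are Einstein}(ii) then forces $A$ to be Einstein, with some row-sum $c > 0$ so that $A\mathds{1}_k = c\mathds{1}_k$. (The degenerate case $A = 0$ can be handled separately.)

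Once $A$ is Einstein, I would parametrize $v = \mathds{1}_k + u$ with $u \in \langle \mathds{1}_k \rangle^{\perp}$; the identity $A\mathds{1}_k = c\mathds{1}_k$ kills the cross terms, yielding
\[
\phi_A(\mathds{1}_k + u) = \frac{ck}{ck + u^t A u}.
\]
Thus $\phi(A) \geq 1$ is equivalent to $u^t A u \leq 0$ for every $u \in \langle \mathds{1}_k \rangle^{\perp}$ with $\mathds{1}_k + u \in \mathcal{S}^{k-1}$, and by homogeneity of the quadratic form this extends to all $u \in \langle \mathds{1}_k \rangle^{\perp}$. Since $A$ is a non-negative symmetric matrix with constant row sum $c$, its spectral radius equals $c$ with top eigenvector $\mathds{1}_k$, so the restriction $A|_{\langle \mathds{1}_k \rangle^{\perp}}$ has top eigenvalue equal to the second eigenvalue of $A$. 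Consequently $u^tAu \leq 0$ on $\langle \mathds{1}_k \rangle^{\perp}$ is equivalent to the second eigenvalue being non-positive, establishing (i). Part (ii) is obtained by upgrading "minimizer" to "unique minimizer", which sharpens the inequality to $u^t A u < 0$ for every nonzero $u \in \langle \mathds{1}_k \rangle^{\perp}$, i.e., negative definiteness on this subspace, equivalent to the second eigenvalue being strictly negative.

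The only subtlety is the reduction step: the normal-cone computation behind Lemma \ref{condition sigma(k,l,c) are Einstein} only certifies that $\mathds{1}_k$ is a critical point, so one should verify carefully that $\phi(A) \geq 1$ really does promote $\mathds{1}_k$ to a minimizer. This is immediate since $\phi_A(\mathds{1}_k) = 1$ and $\phi(A) \leq 1$, so no substantial obstacle appears and the remainder is routine spectral decomposition against the top eigenvector.
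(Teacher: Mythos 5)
Your proof is correct and follows essentially the same route as the paper: reduce to the Einstein case via Lemma \ref{condition sigma(k,l,c) are Einstein}, parametrize $v=\mathds{1}_k+w$ with $w\perp\mathds{1}_k$ so the cross term drops, and read off the sign condition on $w^tAw$ as a statement about the second eigenvalue. Your extra remarks — that $\phi_A(\mathds{1}_k)=1$ forces $\phi(A)\leq 1$ so ``$\phi(A)\geq 1$'' really is the minimizer condition, and that Perron--Frobenius pins $\mathds{1}_k$ as the top eigenvector — make explicit what the paper leaves implicit, but do not change the argument.
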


We need the following lemma to apply Corollary \ref{phi_A=1} to general conditions $\sigma(k,\ell,c)$.

\begin{lemma}\label{partial Einstein implies space form}
Let $\tilde{K}\in M_{\ell\times \ell}(\mathbb{R})$ be a symmetric matrix with non-negative entries such that $\tilde{K}_{11}=\cdots=\tilde{K}_{\ell\ell}=0,\sum_{j=1}^{\ell}\tilde{K}_{ij}\leq c$ for all $i$.
If there is an integer $2<k<\ell$ such that every principal $k\times k$-submatrix of $\tilde{K}$ is Einstein, then $\tilde{K}$ is either 0 or of pinching $1$.
\begin{proof}
For every two pairs $(i,j)$ and $(p,q)$ of distinct elements, we want to show $\tilde{K}_{ij}=\tilde{K}_{pq}$. Since $\tilde{K}$ is symmetric, when $\{i,j\}=\{p,q\}$, we have $\tilde{K}_{ij}=\tilde{K}_{pq}$. If $\{i,j\}\cap \{p,q\}=\phi$, then by considering $\tilde{K}_{ij}, \tilde{K}_{iq},\tilde{K}_{pq}$, it suffices to show $\tilde{K}_{ji}=\tilde{K}_{qi}$ for any distinct three elements $i,j,q$.

For any four distinct elements $i,j,p,q$, extend them to distinct $k+1$ elements $\{ a_1=i,a_2=j,\ldots,a_{k}=p,a_{k+1}=q\}$. By assumption, we have
\begin{align*}
    \tilde{K}_{a_2a_1}+\cdots+\tilde{K}_{a_{k}a_1}&=\tilde{K}_{a_1a_2}+\tilde{K}_{a_3a_2}+\cdots+\tilde{K}_{a_ka_2}\\
    \tilde{K}_{a_2a_1}+\cdots+\tilde{K}_{a_{k-1}a_1}+\tilde{K}_{a_{k+1}a_1}&=\tilde{K}_{a_1a_2}+\tilde{K}_{a_3a_2}+\cdots+\tilde{K}_{a_{k-1}a_2}+\tilde{K}_{a_{k+1}a_{2}},
\end{align*}
and thus
\[
    \tilde{K}_{qi}-\tilde{K}_{pi}=\tilde{K}_{qj}-\tilde{K}_{pj}.
\]

Now, for any distinct three elements $i,j,p$, extend them to distinct $k$ elements $\{a_1=i,a_2=j,a_3=p,a_4,\ldots,a_{k}\}$. Let $\alpha:=\tilde{K}_{pj}-\tilde{K}_{p_i}$. Since every principal $k\times k$-submatrix is Einstein, we have
\[
    \tilde{K}_{a_2a_1}+\cdots+\tilde{K}_{a_ka_1}=\tilde{K}_{a_1a_2}+\tilde{K}_{a_3a_2}+\cdots+\tilde{K}_{a_ka_2}=\tilde{K}_{a_2a_1}+\cdots+\tilde{K}_{a_ka_1}+(k-2)\alpha.
\]
Hence, $(k-2)\alpha=0$, and $\alpha=\tilde{K}_{pj}-\tilde{K}_{p_i}=0$.
\end{proof}
\end{lemma}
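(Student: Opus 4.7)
The plan is to show that all off-diagonal entries of $\tilde{K}$ coincide; combined with non-negativity this immediately yields the stated dichotomy (either they are all $0$, giving $\tilde{K}=0$, or all strictly positive and equal, which is pinching $1$). By the symmetry $\tilde{K}_{ij}=\tilde{K}_{ji}$ the problem reduces to showing $\tilde{K}_{ip}=\tilde{K}_{jp}$ for any three distinct indices $i,j,p$.

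Fix distinct $i,j$ and define $v_p := \tilde{K}_{ip}-\tilde{K}_{jp}$ for each $p\in\{1,\dots,\ell\}\setminus\{i,j\}$. Applied to any $k$-subset $I$ containing $i$ and $j$, the Einstein hypothesis asserts equality of the $i$-th and $j$-th row sums of the restriction $\tilde{K}|_I$; after cancelling the $\tilde{K}_{ij}=\tilde{K}_{ji}$ term this becomes
\[
    \sum_{p\in I\setminus\{i,j\}} v_p = 0.
\]
So every $(k-2)$-element subset of $\{1,\dots,\ell\}\setminus\{i,j\}$ has zero $v$-sum. First I would show the $v_p$ are all equal: given distinct $p,q$ in that index set, the inequality $\ell\geq k+1$ (which follows from $k<\ell$) produces a $(k-3)$-element set $A\subset\{1,\dots,\ell\}\setminus\{i,j,p,q\}$, and the two $k$-subsets $\{i,j,p\}\cup A$ and $\{i,j,q\}\cup A$ give two zero-sum relations whose difference is $v_p-v_q$; hence $v_p=v_q$. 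Calling this common value $v^\ast$, any single zero-sum relation becomes $(k-2)v^\ast=0$, so the other hypothesis $k>2$ forces $v^\ast=0$.

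Thus $\tilde{K}_{ip}=\tilde{K}_{jp}$ for every distinct triple, and every off-diagonal entry equals a single non-negative constant $c^\ast$. If $c^\ast=0$ then $\tilde{K}=0$; otherwise $c^\ast>0$ and $\tilde{K}$ fits the definition of pinching $1$. The only real obstacle is the combinatorial bookkeeping hidden in the first step: one must show that $v_p$ is independent of the specific $k$-subset one chose, and this is precisely where the strict inequalities $2<k<\ell$ are both used essentially optimally, namely $k>2$ to invert the factor $(k-2)$ and $k<\ell$ to guarantee the auxiliary set $A$ exists.
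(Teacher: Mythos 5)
Your proof is correct and follows essentially the same strategy as the paper: both compare the Einstein relations on $k$-subsets that differ by a single element to derive a ``translation invariance'' $\tilde{K}_{pi}-\tilde{K}_{qi}=\tilde{K}_{pj}-\tilde{K}_{qj}$, and then use a single Einstein equation together with the factor $k-2$ to force the common difference to vanish. Your reformulation via the vector $v_p:=\tilde{K}_{ip}-\tilde{K}_{jp}$ and zero-sum relations over $(k-2)$-subsets is a somewhat cleaner packaging of the same two steps, but mathematically it is the same argument, using both inequalities $k>2$ and $k<\ell$ in precisely the same places.
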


Now, we can describe matrices satisfying condition $\sigma(k,\ell,c)$ completely.

\begin{proposition}\label{condition sigma, sigma(k,l,c) for matrices}
Matrices satisfy conditions $\sigma(k,\ell,c)$ are as follows:
\begin{enumerate}[(i)]
    \item Matrices satisfying the condition $\sigma(2,\ell,c)$ are $\tilde{K}\in M_{\ell\times\ell}(\mathbb{R})$ such that $\tilde{K}=\tilde{K}^t, \tilde{K}_{ii}=0, \tilde{K}_{ij}\geq 0$ and $\sum_{j=1}^{\ell}\tilde{K}_{ij}\leq c$. 
    \item Matrices satisfying the strong condition $\sigma(2,\ell,c)$ are $\tilde{K}\in M_{\ell\times\ell}(\mathbb{R})$ such that $\tilde{K}=\tilde{K}^t, \tilde{K}_{ii}=0, \tilde{K}_{ij}>0$ for all $i\neq j$ and $\sum_{j=1}^{\ell}\tilde{K}_{ij}\leq c$. 
    \item For $2<k<\ell$, matrices satisfying the condition $\sigma(k,\ell,\kappa(\ell-1))$ are $\tilde{K}_{ij}=\kappa'(1-\delta_{ij})$, where $0\leq \kappa'\leq \kappa$.
    \item Matrices satisfying the (strong) condition $\sigma(\ell,\ell,c)$ are Einstein matrices of constants at most $c$ and (negative) non-positive second eigenvalue. In particular,
    Einstein matrices of constants at most $c$ and pinching (less than) at most $2$ satisfy the (strong) condition $\sigma(n,n,c)$.
\end{enumerate}
\begin{proof}
Let $\tilde{K}\in M_{\ell\times \ell}(\mathbb{R})$ be a symmetric matrix with non-negative entries such that $\tilde{K}_{11}=\cdots=\tilde{K}_{\ell\ell}=0,\sum_{j=1}^{\ell}\tilde{K}_{ij}\leq c$ for all $i$. We discuss each condition $\sigma$ respectively.
\begin{enumerate}[(i)]
    \item 
     For every $2\times 2$ principal submatrix $A$ of $\tilde{K}$, $A=\begin{pmatrix}
        0 & a \\
        a & 0
    \end{pmatrix}$ has the second eigenvalue $-a\leq 0$, so $\phi_2(\tilde{K})\geq 1$ by Corollary \ref{phi_A=1}, and (i) is proved.
    \item 
    As in (i), $\tilde{K}$ satisfies the strong condition $\sigma(2,\ell,c)$ if and only if for every principal $2\times 2$-submatrix $A$ and $\phi_A$, $1$ is only attained by $\mathds{1}_2$ if and only if $\tilde{K}_{ij}>0$ for every $i\neq j$.
    \item 
    When $\tilde{K}_{ij}=\kappa'(1-\delta_{ij})$, its $k\times k$ principal submatrices $A$ are also of the form $\kappa'(1-\delta_{ij})$, whose second eigenvalue is $-\kappa'$, and $\phi(A)\geq 1$ by Corollary \ref{phi_A=1}. Hence, $\tilde{K}_{ij}$ satisfies the condition $\sigma(k,\ell,\kappa(\ell-1))$.
    On the other hand, if $\tilde{K}$ satisfies condition $\sigma(k,\ell,\kappa(\ell-1))$, then its principal $k\times k$-submatrix are Einstein by Lemma \ref{condition sigma(k,l,c) are Einstein}, and thus $\tilde{K}_{ij}=\kappa'(1-\delta_{ij})$ for some $0\leq \kappa'\leq \kappa$ by Lemma \ref{partial Einstein implies space form}.
    \item 
    The first statement follows from Corollary \ref{phi_A=1} directly. For the second statement, we may assume $\min_{i\neq j}\tilde{K}_{ij}=1$ and decompose $\tilde{K}$ into
    \[
        \tilde{K}_{ij}=(1-\delta_{ij})+(\tilde{K}_{ij}-(1-\delta_{ij})),
    \]
    where the latter term is a multiple of transition matrix and hence of second eigenvalue at most $r-1$. Thus, the second eigenvalue of $\tilde{K}$ is at most $r-2$, and the second statement is proved by the first one.
\end{enumerate}
\end{proof}
\end{proposition}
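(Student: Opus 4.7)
My plan is to handle the four parts in sequence by translating each condition $\sigma(k,\ell,c)$ into a spectral condition on principal submatrices via Corollary \ref{phi_A=1}, which equates $\phi(A) \geq 1$ with $A$ being Einstein and having non-positive second eigenvalue.

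For (i) and (ii), every principal $2 \times 2$-submatrix of $\tilde{K}$ has the form $\begin{pmatrix} 0 & a \\ a & 0 \end{pmatrix}$ with $a = \tilde{K}_{ij} \geq 0$. It is automatically Einstein of constant $a$, with eigenvalues $\pm a$, so its second eigenvalue $-a$ is non-positive; Corollary \ref{phi_A=1} gives $\phi(A) \geq 1$, and Proposition \ref{condition sigma: expression in terms of phi} converts this into (i). Part (ii) is identical except the strong condition demands strict negativity, which holds iff $a > 0$ for every off-diagonal entry.

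For (iii), the direction $\Leftarrow$ is direct: if $\tilde{K}_{ij} = \kappa'(1 - \delta_{ij})$ with $0 \leq \kappa' \leq \kappa$, then every principal $k \times k$-submatrix has the same form, is Einstein with second eigenvalue $-\kappa'$, and the row-sum budget $\kappa'(\ell - 1) \leq \kappa(\ell - 1)$ is met. For $\Rightarrow$, the hypothesis forces $\phi(A) \geq 1$ for every principal $k \times k$-submatrix $A$; since $\phi_A(\mathds{1}_k) = 1$ whenever $A \neq 0$, this shows $\mathds{1}_k$ minimizes $\phi_A$, and Lemma \ref{condition sigma(k,l,c) are Einstein}(ii) forces each such $A$ to be Einstein. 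Lemma \ref{partial Einstein implies space form} then propagates this uniformity to the whole matrix, yielding $\tilde{K}_{ij} = \kappa'(1 - \delta_{ij})$ for some $\kappa' \geq 0$, with $\kappa' \leq \kappa$ coming from the row-sum constraint.

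For (iv), the first statement is direct: the only principal $\ell \times \ell$-submatrix of $\tilde{K}$ is itself, so Corollary \ref{phi_A=1} gives the characterization immediately. For the \emph{in particular} claim, the plan is to normalize $\min_{i \neq j} \tilde{K}_{ij} = 1$ and decompose $\tilde{K}_{ij} = (1 - \delta_{ij}) + B_{ij}$, where $B$ has zero diagonal, non-negative entries bounded by $r - 1$ off-diagonal, and is Einstein (as both summands are). Since the matrix $(1 - \delta_{ij})$ acts as $-I$ on the orthogonal complement of $\mathds{1}_\ell$, the second eigenvalue of $\tilde{K}$ equals $-1 + \lambda_2(B)$, so it suffices to bound $\lambda_2(B) \leq r - 1$, giving a second eigenvalue of $\tilde{K}$ at most $r - 2$, which is non-positive for $r \leq 2$ and strictly negative for $r < 2$. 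I expect the main obstacle to lie in this last eigenvalue bound: the naive Perron--Frobenius argument, realizing $B$ as a scalar multiple of a symmetric stochastic matrix, only gives $|\lambda_2(B)|$ bounded by the row sum of $B$, which can exceed $r - 1$; leveraging the entrywise bound $B_{ij} \leq r - 1$ to obtain the sharp spectral bound is the delicate step that drives the whole result.
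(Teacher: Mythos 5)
Your approach to (i), (ii), and (iii) matches the paper's proof essentially step for step: reduction to a spectral test on principal submatrices via Corollary \ref{phi_A=1}, together with Lemma \ref{condition sigma(k,l,c) are Einstein} and Lemma \ref{partial Einstein implies space form} for the converse in (iii). The first statement of (iv) is also handled the same way.

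Your worry about the eigenvalue bound in the second statement of (iv) is exactly the right thing to flag, and it is more serious than you indicate. The paper simply asserts that the remainder $B:=\tilde{K}-(1-\delta_{ij})$ is ``a multiple of transition matrix and hence of second eigenvalue at most $r-1$,'' but, as you observe, Perron--Frobenius on a symmetric doubly stochastic matrix only bounds the spectrum by the common row sum, which for $B$ is $\sum_{j}B_{ij}\le (r-1)(\ell-1)$, not $r-1$. There is no ``delicate step'' that recovers the entrywise bound $r-1$ on $\lambda_2(B)$, because the claimed inequality is in fact false for $\ell\ge 6$. Take $\ell=6$ and $\tilde{K}_{ij}=2$ whenever $\{i,j\}\subset\{1,2,3\}$ or $\{i,j\}\subset\{4,5,6\}$, and $\tilde{K}_{ij}=1$ for the remaining off-diagonal entries, so that $B$ is the adjacency matrix of two disjoint triangles. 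Then $\tilde{K}$ is Einstein of constant $7$ and pinching exactly $2$, yet $B$ has eigenvalue $2$ on $\langle\mathds{1}_{6}\rangle^{\perp}$ (eigenvector $(1,1,1,-1,-1,-1)$), so $\lambda_{2}(\tilde{K})=-1+2=1>0$. By the first statement of (iv) this $\tilde{K}$ does \emph{not} satisfy $\sigma(6,6,7)$; one can also check this directly by taking $u=(2,2,2,0,0,0)$, which gives $u^{t}\tilde{K}(\mathds{1}_{6}-u)=6(1-2\cdot 1)<0$. So the ``in particular'' clause of (iv), as stated, fails once $\ell\ge 6$, and the threshold for the pinching must degrade with $\ell$ (for the disjoint-two-cliques construction on $\ell=2k$ indices one needs pinching at most $k/(k-1)$). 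You were right not to take the paper's one-line justification at face value; the gap you identified cannot be filled by a sharper spectral estimate.
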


%Discuss condition sigma(k,l,c) on manifolds

Apply Proposition \ref{condition sigma, sigma(k,l,c) for matrices} to manifolds. Then we get the following corollary.

\begin{corollary}\label{condition sigma for manifolds}
Let $(N^n,h)$ be a Riemannian manifold with $\sup(\mathrm{Ric}_{N}^{n-1})_{+}<\infty$. Then the following statements hold:
\begin{enumerate}[(i)]
    \item $N$ always satisfies the condition $\sigma(2,\ell,\sup(\mathrm{Ric}_{N}^{\ell-1})_{+})$.
    \item for every $c>\sup(\mathrm{Ric}_{N}^{\ell-1})_{+}$, $N$ always satisfies the strong condition $\sigma(2,\ell,c)$, and $N$ satisfies the strong condition $\sigma(2,\ell,\sup(\mathrm{Ric}_{N}^{\ell-1})_{+})$ if and only if the maximum of $(\mathrm{Ric}_{N}^{\ell-1})_{+}$ is only attained by vectors $v_1,\ldots,v_{\ell}$ such that $K_N(v_1,v_2),\ldots,K_N(v_1,v_{\ell})>0$
    \item for $2<k<\ell\leq n$, $N$ satisfies the condition $\sigma(k,\ell,(\ell-1)\kappa)$ if and only if $K_N\leq \kappa$. In this case, when $\kappa>0$, $N$ satisfies all strong conditions $\sigma(k,\ell,(\ell-1)\kappa)$.
    \item if $N$ is an Einstein manifold of positive sectional curvature with $\sup K_N/\inf K_N<2$, then $N$ satisfies the strong condition $\sigma(n,n,c)$.
    \item[(iv)'] if $N$ is an Einstein manifold of positive sectional curvature with $\sup K_N/\inf K_N\leq 2$, then $N$ satisfies the condition $\sigma(n,n,c)$.
\end{enumerate}

\begin{proof}
\begin{enumerate}[(i)]
\item Denote $\sup(\mathrm{Ric}_{N}^{\ell-1})_{+}$ by $c$.
For every orthonormal vectors $v_1,\ldots,v_{\ell}\in T_pN$, 
\[
\tilde{K}_{ij}:=\max(K_N(v_i,v_j),0)
\]
is a matrix satisfying the condition $\sigma(2,\ell,c)$ and $\tilde{K}_{ij}\geq K_N(v_i,v_j)$. Hence, $N$ satisfies the condition $\sigma(2,\ell,c)$.
\item 
For the first statement, let $\epsilon>0$ such that $\sup(\mathrm{Ric}_{N}^{\ell-1})_{+}+(\ell-1)\epsilon<c$. Then $\tilde{K}_{ij}:=\max(K_N(v_i,v_j),0)+\epsilon(1-\delta_{ij})$ is our goal.

Assume $N$ satisfies the strong condition $\sigma(2,\ell,c)$ and $v_1,\ldots,v_{\ell}$ attain maximum of $(\mathrm{Ric}_{N}^{\ell-1})_{+}$. Let $\tilde{K}_{ij}$ be the matrix satisfying strong condition $\sigma(2,\ell,c)$ and $\tilde{K}_{ij}\geq K_N(v_i,v_j)$. Since
\[
    \sup(\mathrm{Ric}_{N}^{\ell-1})_{+}=\sum_{i=2}^{\ell}\max(K_N(v_1,v_i),0)\leq \sum_{i=2}^{\ell}\tilde{K}_{1i}\leq \sup(\mathrm{Ric}_{N}^{\ell-1})_{+},
\]
we have $\tilde{K}_{ij}=\max(K_N(v_1,v_i),0)$. Furthermore, since $\tilde{K}$ satisfies the strong condition $\sigma(2,\ell,c)$, $\tilde{K}_{1i}>0$ for $i=2,\ldots,\ell$, and hence $K_{N}(v_1,v_i)=\tilde{K}_{1i}>0$ for all $2\leq i\leq \ell$.

Conversely, assume the maximum of $(\mathrm{Ric}_{N}^{\ell-1})_{+}$ is only attained by such vectors. Let $v_{1},\ldots,v_{\ell}\in T_{pN}$ be orthonormal vectors. By rearrangement, we may assume 
\begin{align*}
    (\mathrm{Ric}_{N}^{\ell-1})_{+}(v_1;v_2,\ldots,v_\ell)=\cdots=(\mathrm{Ric}_{N}^{\ell-1})_{+}(v_p;v_1,\ldots,\check{v_i},\ldots,v_\ell)&=c\\
    (\mathrm{Ric}_{N}^{\ell-1})_{+}(v_{p+1};v_1,\ldots,\check{v_{i+1}},\ldots,v_\ell),\ldots,(\mathrm{Ric}_{N}^{\ell-1})_{+}(v_\ell;v_1,\ldots,v_{\ell-1})&<c,
\end{align*}
where $0\leq p\leq \ell$. Then, by assumption, $K_{N}(v_j,v_k)>0$ if $j\neq k$ and one of $j,k$ is smaller than $p$. Hence, in order to find suitable $\tilde{K}_{ij}$, we only have to adjust the lower right $(\ell-p)\times(\ell-p)$-principal submatrix.

Let $\epsilon>0$ be a small constant such that 
\[
(\mathrm{Ric}_{N}^{\ell-1})_{+}(v_{p+1};v_1,\ldots,\check{v_{i+1}},\ldots,v_\ell)+\ell\epsilon,\ldots,(\mathrm{Ric}_{N}^{\ell-1})_{+}(v_\ell;v_1,\ldots,v_{\ell-1})+\ell\epsilon\leq c.
\]
Then 
\[
\tilde{K}_{ij}:=\begin{pmatrix}
\max(K_N(v_i,v_j),0)
\end{pmatrix}+
\begin{pmatrix}
0_{p\times p} & 0_{p\times (\ell-p)}\\
0_{(\ell-p)\times p} & (1-\delta_{ij})\epsilon
\end{pmatrix}
\]
is a matrix satisfying the strong condition $\sigma(2,\ell,c)$ and $\tilde{K}_{ij}\geq K_N(v_i,v_j)$.
\item 
Assume $N$ satisfies the condition $\sigma(k,\ell,(\ell-1)\kappa)$. Then for every orthonormal vectors $v_1,\ldots,v_{\ell}$, $K_N(v_i,v_j)\leq \kappa(1-\delta_{ij})$, so $K_{N}\leq \kappa$. Conversely, if $K_{N}\leq \kappa$, $\tilde{K}_{ij}=\kappa(1-\delta_{ij})$ is our goal.
\item[(iv)\&(iv)'] 
$\tilde{K}_{ij}:=K_N(v_i,v_j)$ is our goal by Proposition \ref{condition sigma, sigma(k,l,c) for matrices}.
\end{enumerate}
\end{proof}
\end{corollary}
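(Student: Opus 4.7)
The plan is to treat each of the five statements by exhibiting a suitable candidate matrix $\tilde K$ and then invoking Proposition \ref{condition sigma, sigma(k,l,c) for matrices} to verify that it satisfies the relevant matrix-level condition $\sigma$. The common template: given orthonormal vectors $v_1,\dots,v_\ell \in T_pN$, set $(K_N)_{ij} := K_N(v_i,v_j)$ and build $\tilde K$ from the positive part $\max((K_N)_{ij},0)$, possibly perturbed by a multiple of the adjacency matrix $(1-\delta_{ij})$.

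For (i), I would take $\tilde K_{ij} := \max((K_N)_{ij},0)$. This is symmetric, has vanishing diagonal and non-negative entries, and each row sum is exactly $(\mathrm{Ric}^{\ell-1}_N)_+(v_i;\dots) \leq c$. Every $2\times 2$ principal submatrix is of the form $\begin{pmatrix}0 & a\\ a & 0\end{pmatrix}$, which has non-positive second eigenvalue, so by Proposition \ref{condition sigma, sigma(k,l,c) for matrices}(i) the matrix satisfies $\sigma(2,\ell,c)$. For (ii), the first assertion is handled by the same matrix plus $\epsilon(1-\delta_{ij})$ with $\epsilon$ small enough that row sums stay below $c$; since all off-diagonal entries are then strictly positive, Proposition \ref{condition sigma, sigma(k,l,c) for matrices}(ii) applies. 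The nontrivial part is the iff statement at $c=\sup(\mathrm{Ric}^{\ell-1}_N)_+$: if vectors $v_1,\dots,v_\ell$ realize the supremum along direction $v_1$, then the row sum bound forces $\tilde K_{1i} = \max((K_N)_{1i},0)$ exactly, and the strong condition then forces these entries to be positive — giving the stated positivity of $K_N(v_1,v_i)$. Conversely, under the positivity hypothesis, I would sort the $v_i$'s so that the first $p$ attain the maximum and only modify the lower-right $(\ell-p)\times(\ell-p)$ block by adding a small $\epsilon(1-\delta_{ij})$, which keeps row sums $\leq c$ and makes the entries strictly positive exactly where needed.

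For (iii), one direction is immediate: if $K_N\leq \kappa$, take $\tilde K_{ij} = \kappa(1-\delta_{ij})$; this is Einstein of constant $(\ell-1)\kappa$, its principal submatrices have the same shape, hence non-positive second eigenvalue, and Proposition \ref{condition sigma, sigma(k,l,c) for matrices}(iii) gives the condition. The reverse direction is the main technical step: if $\tilde K$ satisfies $\sigma(k,\ell,(\ell-1)\kappa)$ with $2<k<\ell$, then by Lemma \ref{condition sigma(k,l,c) are Einstein} every principal $k\times k$-submatrix is Einstein, so by Lemma \ref{partial Einstein implies space form} we get $\tilde K_{ij}=\kappa'(1-\delta_{ij})$ with $\kappa'\leq\kappa$, forcing $K_N(v_i,v_j)\leq\kappa$. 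The strong case when $\kappa>0$ is then automatic from Proposition \ref{condition sigma, sigma(k,l,c) for matrices}(iii)–(iv) since $\tilde K_{ij}>0$ off the diagonal. Finally, for (iv) and (iv)', I would simply take $\tilde K_{ij}:=K_N(v_i,v_j)$, which is Einstein with pinching equal to that of $N$, and directly invoke the pinching-at-most-$2$ clause of Proposition \ref{condition sigma, sigma(k,l,c) for matrices}(iv).

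The main obstacle I anticipate is the iff in (ii): one must pin down exactly where the supremum of $(\mathrm{Ric}^{\ell-1}_N)_+$ is attained and show that the strong condition pins $\tilde K$ to be an exact copy of $\max(K_N,0)$ on that row, then build a delicate block-wise perturbation in the converse direction that adds slack only where there is room. The remaining parts reduce cleanly to the matrix-level classification already established.
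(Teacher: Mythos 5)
Your proposal is correct and follows essentially the same route as the paper: the same candidate matrices $\max((K_N)_{ij},0)$, the same $\epsilon(1-\delta_{ij})$ perturbations (global for the first part of (ii), block-restricted after sorting for the converse), the same reduction to Lemma \ref{condition sigma(k,l,c) are Einstein} and Lemma \ref{partial Einstein implies space form} in (iii), and the direct invocation of the Einstein/pinching clause of Proposition \ref{condition sigma, sigma(k,l,c) for matrices} for (iv) and (iv)'. The only minor imprecision is your citation for the strong-condition claim at the end of (iii): since there $k<\ell$, the relevant fact that condition and strong condition coincide once all off-diagonal entries are positive is Proposition \ref{strong condition sigma: expression in terms of phi}(i), not Proposition \ref{condition sigma, sigma(k,l,c) for matrices}(iii)--(iv); this does not affect the substance of the argument.
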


Corollary \ref{condition sigma for manifolds} demonstrates the constraints of condition $\sigma(k,\ell,c)$. That is why we should consider parameter $\phi$ for weaker curvature conditions.

The main difficulty in investigating general condition $\sigma(k,\ell,c;\phi)$ is that the restrictions to principal submatrices don't preserve algebraic properties of matrices. There are two situations where we can avoid it. The first one is when we consider condition $\sigma(n,n,c;\phi)$ for Einstein manifolds, where we don't restrict the matrices. The second one is solely considering the curvature pinching condition, which is evidently preserved under restriction.

%Discuss condition sigma(k,l,c,phi) for Einstein manifold

\begin{proposition}\label{condition sigma for Einstein manifolds}
Let $(N^n,h)$ be an Einstein manifold with $\mathrm{Ric}_N=c\cdot h$ and positive sectional curvature $K_N>0$ of pinching $r>2$. Then $N$ satisfies the strong condition $\sigma(n,n,c;\frac{n-2+r}{nr-n})$.
\begin{proof}
For every orthonormal basis $v_1,\ldots,v_n$ of $T_pN$, we are going to show $\tilde{K}_{ij}:=K_{N}(v_i,v_j)$ satisfies the strong condition $\sigma(n,n,c;\frac{n-2+r}{nr-n})$. Notice that it suffices to show $\phi(\tilde{K})>\frac{n-2+r}{nr-n}$.

First, we estimate the second eigenvalue of $\tilde{K}_{ij}$. Let $A:=\max\tilde{K}_{ij}=\tilde{K}_{i_0j_0}, B:=\inf_{i\neq j}\tilde{K}_{ij}$. Then we have
\begin{align*}
    A+(n-2)B&\leq \sum_{j=1,j\neq i_0}^{n}\tilde{K}_{i_0j}\leq c, \text{ and }\\
    A&\leq r\cdot B
\end{align*}
by curvature pinching condition. Decompose $\tilde{K}_{ij}$ into 
\[
B(1-\delta_{ij})+(\tilde{K}_{ij}-B(1-\delta_{ij})).
\]
Then $(\tilde{K}_{ij}-B(1-\delta_{ij}))$ is an Einstein matrix of entries at most $A-B$, and its second eigenvalue (that is, the largest eigenvalue on $\langle \mathds{1}_{n}\rangle^{\perp}$) is at most $A-B$. Hence, the second eigenvalue of $\tilde{K}_{ij}$ is at most $A-2B$. Now, we solve maximum of $A-2B$ subject to $A+(n-2)B\leq c,A\leq r\cdot B$. When $A+(n-2)B$ is fixed, the maximum of $A-2B$ occurs when $A=r\cdot B$, so we assume $A=r\cdot B$. In this case, $(n-2+r)\cdot B\leq c$, and hence the second eigenvalue is smaller than $A-2B=(r-2)\cdot B\leq \frac{r-2}{n-2+r}\cdot c$.

Since $\phi_{\tilde{K}}(v)$ tends to $\infty$ as $v$ tends to corners $e_1,\ldots,e_{k}$ of $\mathcal{S}^{n-1}$, we have
\begin{align*}
    \phi(\tilde{K})&=\inf_{v\in \mathcal{S}^{n-1}\setminus \{e_1,\ldots,e_n\}}\frac{v^{t}\tilde{K}\mathds{1}_{n}}{v^{t}\tilde{K}v}\\
    &=\inf_{w\in (\mathcal{S}^{n-1}\setminus \{e_1,\ldots,e_n\})-\mathds{1}_k}\frac{cn}{cn+w^tAw}\\
    &>\frac{cn}{cn+(n^2-n)\left(\frac{r-2}{n-2+r}\right)c}\\
    &=\frac{n-2+r}{nr-n}.
\end{align*}
Therefore, $\tilde{K}$ satisfies the strong condition $\sigma(n,n,c;\frac{n-2+r}{nr-n})$, and so is $N$.
\end{proof}
\end{proposition}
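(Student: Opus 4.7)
The plan is to reduce the condition $\sigma$ for the manifold to a statement about a single $n\times n$ matrix via an orthonormal frame. Given an orthonormal basis $v_1,\ldots,v_n$ of $T_pN$, set $\tilde K_{ij}:=K_N(v_i,v_j)$; the hypotheses guarantee that $\tilde K$ is symmetric with zero diagonal, strictly positive off-diagonal entries (since $K_N>0$), Einstein with row sum $c$ (since $\mathrm{Ric}_N = c\cdot h$), and of pinching $r$ in the sense $\max_{i\neq j}\tilde K_{ij}\leq r\min_{i\neq j}\tilde K_{ij}$. By Proposition \ref{strong condition sigma: expression in terms of phi}(ii), it suffices to prove that $\phi_{\tilde K}(v)>(n-2+r)/(nr-n)$ for every $v\in\mathcal{S}^{n-1}\setminus\{\mathds{1}_n\}$.

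\textbf{Reformulating $\phi_{\tilde K}$.} By Lemma \ref{condition sigma: solve phi_A(v)} together with the Einstein identity $\tilde K\mathds{1}_n=c\mathds{1}_n$, we have $v^t\tilde K\mathds{1}_n=cn$, so $\phi_{\tilde K}(v)=cn/(v^t\tilde K v)$. Writing $v=\mathds{1}_n+w$ with $w\perp\mathds{1}_n$ (and $w_i\geq-1$), the Einstein identity again yields $v^t\tilde K v=cn+w^t\tilde K w$. The desired inequality is therefore equivalent to $w^t\tilde K w<cn(r-2)(n-1)/(n-2+r)$ for every admissible nonzero $w$ with $|w|^2<n(n-1)$.

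\textbf{Bounding the second eigenvalue of $\tilde K$.} Let $A:=\max_{i\neq j}\tilde K_{ij}$ and $B:=\min_{i\neq j}\tilde K_{ij}$. Summing the row containing the maximum entry gives $A+(n-2)B\leq c$; combined with the pinching $A\leq rB$ this yields $(n-2+r)B\leq c$, hence $B\leq c/(n-2+r)$. Next, decompose $\tilde K=B(J-I)+C$, where $J=\mathds{1}_n\mathds{1}_n^t$ and $C$ has non-negative entries bounded by $A-B$, zero diagonal, and is Einstein with row sum $c-(n-1)B$. Since $B(J-I)$ acts as $-B\cdot I$ on $\langle\mathds{1}_n\rangle^\perp$, one has $\mu_2(\tilde K)=-B+\mu_2(C)$. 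The key step is to show $\mu_2(C)\leq A-B$, which together with the preceding estimate gives
\[
\mu_2(\tilde K)\leq A-2B\leq(r-2)B\leq\frac{(r-2)c}{n-2+r}.
\]

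\textbf{Concluding step and main obstacle.} Once the eigenvalue estimate is in hand, for $v\in\mathcal{S}^{n-1}$ the bound $|w|^2=|v|^2-n\leq n(n-1)$ holds, with equality only at the corners $e_i$ (where $v^t\tilde K v=0$ and hence $\phi_{\tilde K}(v)=\infty$). For $v\neq\mathds{1}_n$ and not a corner, $|w|^2<n(n-1)$, so Rayleigh yields $w^t\tilde K w\leq\mu_2(\tilde K)|w|^2<\mu_2(\tilde K)n(n-1)\leq n(n-1)(r-2)c/(n-2+r)$, which after substitution gives $\phi_{\tilde K}(v)>(n-2+r)/(nr-n)$ strictly. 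The central technical difficulty is justifying the bound $\mu_2(C)\leq A-B$: neither Perron--Frobenius (which only bounds all eigenvalues by the Perron value $c-(n-1)B$, generally larger than $A-B$) nor the entry bound alone suffices, and one must combine the Einstein constraint (row sums constant, $\mathds{1}_n$ as Perron eigenvector) with the non-negativity and entry ceiling, comparing $C$ against the reference matrix $(A-B)(J-I)$, which acts as $-(A-B)\cdot I$ on $\langle\mathds{1}_n\rangle^\perp$. This spectral comparison is the step I would expect to require the most care.
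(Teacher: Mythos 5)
You follow the paper's proof verbatim in its overall architecture: reduce via Proposition \ref{strong condition sigma: expression in terms of phi} to a lower bound on $\phi(\tilde K)$ for $\tilde K_{ij}=K_N(v_i,v_j)$, rewrite $\phi_{\tilde K}(v)=cn/(cn+w^t\tilde Kw)$ using the Einstein identity, split $\tilde K=B(J-I)+C$, and try to bound $\mu_2(\tilde K)=\mu_2(C)-B$. Your version is actually cleaner in places (you make explicit that the corners $v=ne_i$ are the only place $|w|^2=n(n-1)$ is achieved, and that $\phi_{\tilde K}=\infty$ there, which the paper leaves implicit). But you explicitly stop short of proving the one step the entire argument hinges on, namely $\mu_2(C)\leq A-B$, and you say so honestly; the paper, for its part, asserts this without proof.

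This is a genuine gap, and it cannot be closed as stated, because the inequality $\mu_2(C)\leq\max_{i\neq j}C_{ij}$ is false for Einstein matrices in general. For $n=6$ take $C$ block-diagonal with two copies of $\tfrac{s}{2}(J_3-I_3)$: it is symmetric, zero-diagonal, non-negative, with constant row sums $s$, yet $\mathds{1}_3\oplus(-\mathds{1}_3)\in\langle\mathds{1}_6\rangle^\perp$ is an eigenvector with eigenvalue $s$, so $\mu_2(C)=s=2\max_{i\neq j}C_{ij}$. For the corresponding $\tilde K$ (in-block entries $a$, cross-block entries $b$, so $B=b$, $A=a$, $c=2a+3b$, pinching $r=a/b$) one gets $\mu_2(\tilde K)=2a-3b>a-2b=A-2B$ whenever $a>b$, directly contradicting the claimed spectral estimate; and for the final quantity one can compute $\phi(\tilde K)=(2a+3b)/(4a)$, which is strictly less than $(n-2+r)/(nr-n)$ for $2<r<9/4$. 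So the eigenvalue bound, and hence the proposed chain of estimates, is not a consequence of "Einstein, non-negative, entry-capped by $A-B$" alone. Your instinct to be suspicious of this step was correct; a valid proof must either exploit extra structure of curvature tensors not captured by the matrix $\tilde K$ alone, or bound $w^t\tilde Kw$ over the actual constraint polytope $\{w\perp\mathds{1},\,w_i\geq-1\}$ (where the troublesome eigenvector is far from admissible with $|w|^2$ near $n(n-1)$) rather than over the enclosing ball.
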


The parameter $\phi$ for general curvature pinching conditions can be characterized by some extreme cases. Given a positive integer $k$, we let $\mathcal{I}_{k}:=\{(i,j)\big|1\leq i<j\leq k\}$ and for every $J\subset \mathcal{I}_{k}$, define $A_{J}(r)\in M_{k\times k}(\mathbb{R})$ by
\begin{align*}
    A_{J}(r)_{ii}&:=0\\
    A_{J}(r)_{ij}&=A_{J}(r)_{ji}:=r\tag{ for every $(i,j)\in J$}\\
    A_{J}(r)_{ij}&:=1\tag{for other $(i,j)$}.
\end{align*}
Then we have the following proposition:

\begin{proposition}\label{condition sigma: comparison to A_J(r)}
For any $k\times k$-matrix $A$ of pinching $r$,
\[
    \phi(A)\geq \min_{J\subset \mathcal{I}_k}\phi(A_{J}(r))=:\phi_k(r).
\]
Moreover, for every $k\geq 3$, $\phi_k(r)>2/k$.
\begin{proof}
For convenience, we may assume $\inf_{i\neq j}A_{ij}=1$. Then $A_{ij}\leq r$. For every vector $v\in (\mathbb{R}_{\geq 0})^k$ with $\sum_{i=1}^{k}v_i\leq \phi_k(r)$, we have
\begin{align*}
    v^tA(\mathds{1}_k-v)&=\sum_{(i,j)\in I}A_{ij}(v_i+v_j-2v_iv_j)\\
    &\geq \sum_{(i,j)\in J}r(v_i+v_j-2v_iv_j)+\sum_{(i,j)\in \mathcal{I}_k\setminus J}(v_i+v_j-2v_iv_j)\tag{where $J:=\{(i,j)\in I\big|v_i+v_j-2v_iv_j<0\}$}\\
    &\geq 0\tag{since $\sum_{i=1}^{k}v_i\leq \phi(A_J(r))$},
\end{align*}
so $\phi_{k}(A)\geq \phi_k(r).$

Now, suppose $\phi_k(r)\leq 2/k$. Then, the minimum of some $\phi_{A_J(r)}$ is attained by a vector $v\in (\mathbb{R}_{\geq 0})^k$. In this case, $v^tA_{J}(r)(\mathds{1}_k-v)=0$. Since each $v_i+v_j-2v_iv_j$ is non-negative, they are all zero, and $v=0$. However, this is impossible since one can show $\mathds{0}_k$ is an isolated zero of $v^tA_{J}(r)(\mathds{1}_k-v)$ by its first variation. Hence, $\phi_k(r)>2/k$.
\end{proof}
\end{proposition}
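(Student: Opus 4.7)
The plan is to use a term-by-term comparison that reduces the bound for an arbitrary matrix of pinching $r$ to the finite family $\{A_J(r)\}_{J\subset\mathcal{I}_k}$. After rescaling I may assume $\inf_{i\neq j}A_{ij}=1$, so that $1\le A_{ij}\le r$ for $i\neq j$. I would start from the expansion
\[
v^{t}A(\mathds{1}_k-v)=\sum_{(i,j)\in\mathcal{I}_k}A_{ij}\bigl(v_i+v_j-2v_iv_j\bigr)
\]
and split $\mathcal{I}_k$ into the set $J:=\{(i,j):v_i+v_j-2v_iv_j<0\}$ and its complement. On pairs in $J$ the coefficient $A_{ij}$ can be enlarged to its upper bound $r$ without increasing the sum, and on the complementary pairs it can be shrunk to its lower bound $1$ without increasing the sum. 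The resulting expression is exactly $v^{t}A_J(r)(\mathds{1}_k-v)$, which is non-negative whenever $\sum v_i\le k\cdot\phi(A_J(r))$, and thus whenever $\sum v_i\le k\cdot\phi_k(r)$. This yields $\phi(A)\ge\phi_k(r)$.

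For the strict bound $\phi_k(r)>2/k$ under $k\ge 3$, I would argue by contradiction. Suppose $\phi_k(r)\le 2/k$ and choose $J_0$ realizing $\phi(A_{J_0}(r))=\phi_k(r)$. By Proposition \ref{minimizer of phi_A} this minimum is attained at some $v_*\in\mathcal{S}^{k-1}$; set $u:=\phi_k(r)\,v_*$, so $\sum u_i=k\phi_k(r)\le 2$ and $u^{t}A_{J_0}(r)(\mathds{1}_k-u)=0$. The identity
\[
a+b-2ab=\tfrac{1}{2}(a+b)(2-a-b)+\tfrac{1}{2}(a-b)^2,
\]
valid for $a,b\ge 0$ with $a+b\le 2$, shows each pairwise term is non-negative. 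Since every off-diagonal entry of $A_{J_0}(r)$ is strictly positive, vanishing of the sum forces $u_i+u_j-2u_iu_j=0$ for every $i\neq j$, and the equality case of the identity forces $(u_i,u_j)\in\{(0,0),(1,1)\}$ for every $i\neq j$.

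The main obstacle is this final combinatorial step: deducing $u=0$, which contradicts $u\neq 0$ (since $\phi_k(r)>0$ by Proposition \ref{minimizer of phi_A} and $v_*\in\mathcal{S}^{k-1}$). The dichotomy for pairs is resolved by noting that if some pair $(u_i,u_j)=(1,1)$, then for any third index $\ell$ -- which exists precisely because $k\ge 3$ -- neither admissible form of $(u_i,u_\ell)$ is compatible with $u_i=1$ and $\sum u\le 2$. Hence no $(1,1)$ pair occurs and $u\equiv 0$, giving the contradiction. The hypothesis $k\ge 3$ is used only at this point; for $k=2$ the pair $(1,1)$ is legitimate, mirroring the fact that $\phi_2(r)=1=2/k$ is sharp.
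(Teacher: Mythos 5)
Your proof of the first inequality $\phi(A)\ge\phi_k(r)$ is identical in substance to the paper's: normalize so $\inf_{i\neq j}A_{ij}=1$, expand $v^tA(\mathds{1}_k-v)$ over pairs, raise coefficients to $r$ on the set $J$ of negative terms and lower to $1$ elsewhere, and compare with $A_J(r)$. (You also correctly track the factor of $k$ in ``$\sum v_i\le k\,\phi_k(r)$''; the paper's displayed constraint drops this factor, which appears to be a typo.)

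For the strict bound $\phi_k(r)>2/k$, your argument is cleaner and fills in a step the paper leaves implicit. The paper's proof asserts that vanishing of every term $v_i+v_j-2v_iv_j$ forces $v=0$ and then appeals to $\mathds{0}_k$ being an isolated zero; but as you observe, under $\sum v_i\le 2$ the equality $a+b-2ab=0$ permits either $(a,b)=(0,0)$ or $(a,b)=(1,1)$, so $v=0$ does not follow immediately. Your combinatorial resolution -- that a $(1,1)$ pair together with a third index (available since $k\ge3$) is incompatible with the dichotomy and the budget $\sum u\le 2$ -- is exactly what is needed, and it also makes transparent why the conclusion fails for $k=2$. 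Your route avoids the paper's somewhat opaque ``isolated zero by first variation'' remark entirely, instead getting the contradiction directly from $u\neq0$ (which follows from $\phi_k(r)>0$ via Proposition \ref{minimizer of phi_A}). This is a more complete and readable version of the same underlying argument.
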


\begin{corollary}
    For every Riemannian manifold $N^n$ of positive sectional curvature, curvature pinching $\kappa\geq 1$, and $\mathrm{Ric}_N^{\ell-1}\leq c$, $N$ satisfies the condition $\sigma(k,\ell,c;\phi_k(\kappa))$. 
\begin{proof}
    This corollary is directly proved by Proposition \ref{condition sigma: comparison to A_J(r)}. 
\end{proof}
\end{corollary}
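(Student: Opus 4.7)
The plan is a direct assembly of Proposition \ref{condition sigma: comparison to A_J(r)} with the manifold-level reformulation in Proposition \ref{condition sigma: expression in terms of phi}. Fix any point $p\in N$ and any $\ell$-tuple of orthonormal vectors $v_1,\ldots,v_\ell \in T_pN$. The natural comparison matrix to try is
\[
\tilde{K}_{ij} := \begin{cases} K_N(v_i,v_j), & i\neq j,\\ 0, & i=j.\end{cases}
\]
First I would verify the bookkeeping properties of $\tilde{K}$: the off-diagonal entries are strictly positive by the positive sectional curvature hypothesis, the diagonal vanishes by construction, the row-sum bound $\sum_{j=1}^{\ell}\tilde{K}_{ij}\leq c$ is exactly the assumption $\mathrm{Ric}_N^{\ell-1}\leq c$, and the pinching hypothesis $\sup K_N/\inf K_N \leq \kappa$ makes $\tilde{K}$ a matrix of pinching $\kappa$ in the sense of the excerpt's definition.

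Next I would observe that any principal $k\times k$ submatrix $A$ of $\tilde{K}$ inherits pinching $\kappa$: restricting to a subset of rows/columns only removes pairs of off-diagonal entries, and cannot increase the maximum-to-minimum ratio among the remaining ones. Proposition \ref{condition sigma: comparison to A_J(r)} then applies to each such $A$ and gives $\phi(A)\geq \phi_k(\kappa)$. Taking the minimum over all principal $k\times k$ submatrices yields $\phi_k(\tilde{K})\geq \phi_k(\kappa)$, and by Proposition \ref{condition sigma: expression in terms of phi} this is precisely the statement that $\tilde{K}$ satisfies the condition $\sigma(k,\ell,c;\phi_k(\kappa))$. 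Since $\tilde{K}_{ij}\geq (K_N)_{ij}$ by construction and the orthonormal frame $v_1,\ldots,v_\ell$ was arbitrary, $N$ itself satisfies $\sigma(k,\ell,c;\phi_k(\kappa))$, which is the claim.

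There is essentially no obstacle. The corollary is a translation between the algebraic comparison theorem (Proposition \ref{condition sigma: comparison to A_J(r)}), the syntactic conversion between matrix language and condition $\sigma$ language (Proposition \ref{condition sigma: expression in terms of phi}), and the manifold-level definition. The only point that merits explicit mention, and the one step where a careless reader might slip, is the stability of the pinching bound under principal-submatrix restriction; everything else is immediate from the hypotheses on $N$.
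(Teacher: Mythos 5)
Your proposal is correct and fills in exactly the bookkeeping that the paper's one-line proof leaves implicit: take $\tilde K_{ij}=K_N(v_i,v_j)$ off the diagonal, verify it satisfies the row-sum bound from $\mathrm{Ric}_N^{\ell-1}\le c$ and has pinching $\kappa$ (which passes to principal submatrices), then combine Proposition~\ref{condition sigma: comparison to A_J(r)} with Proposition~\ref{condition sigma: expression in terms of phi}. This is the same route the paper intends when it says the corollary follows directly from Proposition~\ref{condition sigma: comparison to A_J(r)}.
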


\begin{corollary}\label{condition sigma for 1/4-pinching}
Let $N^n$ be a Riemannian manifold of positive sectional curvature with quarter pinching. Then $N$ satisfies the condition $\sigma(3,\ell,\sup \mathrm{Ric}_{N}^{\ell-1};5/6)$ for every $3\leq \ell\leq n$. Moreover, when $3<\ell$, $N$ satisfies the strong condition $\sigma(3,\ell,\sup \mathrm{Ric}_{N}^{\ell-1};5/6)$.
\begin{proof}
For every orthonormal vectors $v_1,v_2,v_3$, consider $\tilde{K}_{ij}:=K_N(v_i,v_j)$. By symmetry, we have $\min_{J}\phi(A_{J}(r))=\min(\phi(A_{\{(1,2\}}(r)),\phi(A_{\{(1,2\}}(1/r)))$.
Let $A:=A_{ \{1,2\} }(r)\in M_{3\times 3}(\mathbb{R})$. Notice that $\min_{v\in \mathcal{S}^2} \phi_{A}(v)$ is attained by some $v^t=(x \ x \ 3-2x)$, and 
\[
    \phi:=\phi(A)=\phi_A(v)=\frac{(2r-2)x+6}{(2r-8)x^2+12x}.
\]
If $x=\frac{3}{2}$, then $\phi=\frac{2r+2}{3r}$. Now, suppose $0<x<\frac{3}{2}$. Then $A(\mathds{1}_{3}-2\phi v)\in \langle \mathds{1}_3\rangle$, and
\[
    \phi=\frac{r-1}{(2r-8)x+6}.
\]
Therefore, 
\[
    (r^2-5r+4)x^2+(6r-24)x+18=0,
\]
with discriminant $D=-36((r-1)^2-9)$ and solution $x=\frac{-(6r-24)-6\sqrt{(2+r)(4-r)}}{2(r-4)(r-1)}$, so $r\leq 4$. Evaluate $\phi$ at $x$ again. We get
\[
    \phi=\frac{(r-1)^2}{18-6\sqrt{(2+r)(4-r)}}.
\]
Thus, $\phi(A_{\{1,2\}}(4))=5/6$, and $\phi(A_{\{1,2\}}(1/4))=\frac{1}{2}+\frac{\sqrt{15}}{8}>5/6$, so it is proved by Proposition \ref{condition sigma: comparison to A_J(r)}.
\end{proof}
\end{corollary}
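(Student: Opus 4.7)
The plan is to apply Proposition~\ref{condition sigma: comparison to A_J(r)} directly. For orthonormal $v_1,\ldots,v_\ell \in T_pN$, set $\tilde K_{ij}:=K_N(v_i,v_j)$. This matrix has non-negative off-diagonal entries (positive sectional curvature), vanishing diagonal, row sums bounded by $\sup \mathrm{Ric}_N^{\ell-1}$, and pinching at most $4$ because $N$ is quarter-pinched. Every principal $3 \times 3$ submatrix inherits these properties, so the proposition reduces the task to computing $\phi_3(4) := \min_{J \subset \mathcal{I}_3} \phi(A_J(4))$ and showing it equals $5/6$.

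Next I would cut the eight subsets $J \subset \mathcal{I}_3$ down to a manageable list. The permutation symmetry of $\mathcal{I}_3$ leaves representatives with $|J| = 0,1,2,3$; scale invariance of $\phi$ identifies $A_{\mathcal{I}_3}(r)$ with $A_\emptyset(1)$, and (after relabeling rows and columns) identifies $A_{\{(1,3),(2,3)\}}(r)$ with a positive multiple of $A_{\{(1,2)\}}(1/r)$. So the only effective cases are $A_\emptyset$, $A_{\{(1,2)\}}(4)$, and $A_{\{(1,2)\}}(1/4)$. The first is Einstein and circulant with second eigenvalue $-1 < 0$, so Corollary~\ref{phi_A=1} immediately yields $\phi(A_\emptyset) = 1$.

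For $A = A_{\{(1,2)\}}(r)$ I would exploit the $S_2$-symmetry in the first two coordinates to restrict attention to candidate minimizers $v = (x, x, 3-2x)^t$ with $0 < x \leq 3/2$ (the corners $v \to 3 e_i$ are excluded since $\phi_A \to \infty$ there). Substituting into $\phi_A(v) = (v^t A \mathds{1}_3)/(v^t A v)$ turns the problem into a rational function of a single variable $x$, whose minimum occurs either at the boundary $x = 3/2$ (equivalently $v_3 = 0$, a $2 \times 2$ reduction) or at an interior critical point. The Lagrange-type condition $A(\mathds{1}_3 - 2\phi v) \in \langle \mathds{1}_3 \rangle$ from Lemma~\ref{condition sigma(k,l,c) are Einstein} detects interior critical points as roots of a quadratic in $x$ whose discriminant carries the factor $(4 - r)(2 + r)$. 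For $r = 4$ this quadratic becomes inconsistent, forcing the minimum to sit at the boundary and equal $(2r+2)/(3r)|_{r=4} = 5/6$; for $r = 1/4$ a genuine interior critical point appears and yields a closed-form value that I expect to exceed $5/6$.

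Finally, for $\ell > 3$, Proposition~\ref{strong condition sigma: expression in terms of phi}(i) upgrades the conclusion to the strong condition, since positivity of sectional curvature makes every off-diagonal $\tilde K_{ij}$ strictly positive. The main obstacle in this plan is the casework that identifies the minimum of $\phi_A$ over the simplex: deciding between boundary and interior critical points for each reduced $J$, and verifying that it is precisely the degeneracy of the interior quadratic at $r = 4$ which pins down the sharp constant $5/6$. A welcome feature of the symmetry reductions is that they leave only a one-parameter family to optimize, so the computation is essentially a single calculus exercise in $x$.
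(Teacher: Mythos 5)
Your proposal follows essentially the same route as the paper: reduce by permutation and scaling symmetry to the matrices $A_{\{(1,2)\}}(r)$ and $A_{\{(1,2)\}}(1/r)$, restrict to the symmetric ansatz $v=(x,x,3-2x)^t$, and split into boundary ($x=3/2$) and interior (Lagrange condition) cases, finding that the quadratic degenerates at $r=4$ so the boundary value $5/6$ is sharp. Your handling is, if anything, a bit more explicit than the paper's --- you justify that $\phi(A_\emptyset)=1$ via Corollary~\ref{phi_A=1} and you invoke Proposition~\ref{strong condition sigma: expression in terms of phi}(i) to get the strong condition for $\ell>3$, steps the paper leaves implicit --- though you stop at ``I expect'' for the final numerical check $\phi(A_{\{(1,2)\}}(1/4))=\tfrac12+\tfrac{\sqrt{15}}{8}>5/6$, which the paper carries out.
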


\section{Rigidity of Contracting Maps}

In this section, we build upon Theorems~\ref{first convergence result} and Theorem \ref{rigidity of contracting maps} to establish the rigidity of contracting maps.
The contracting conditions include upper bound for energy density, 2-nonnegativity of $\alpha=g-F^*h$ and general $k$-nonnegativity of $\alpha=\phi\cdot g-F^*h$. By rescaling the metric, these results can be extended to the setting where $\mathrm{Ric}_M > 0$.

The assumption about an upper bound on the energy density corresponds to condition $\sigma(\min(n,m),\min(n,m),c)$, valid for the space-form case.

\begin{corollary}\label{rigidity of contracting map between space forms}
Let $(M^m,g)$ and $(N^n,h)$ be closed connected manifolds. Assume their curvatures satisfy:
\begin{enumerate}[(a)]
    \item $\mathrm{Ric}_M\geq (\min(n,m)-1)\kappa$
    \item $K_N\leq \kappa$
\end{enumerate}
for some positive constant $\kappa$. Then for every smooth map $F\colon M\to N$ with $|d F|_{g,h}^2\leq \min(n,m)$, one of the following holds:
\begin{enumerate}[(i)]
    \item $F\colon M\to N$ is a Riemannian fiber bundle, and $K_N\equiv \kappa$;
    \item $F\colon M\to N$ is a totally geodesic isometric immersion, and $K_{M}\equiv \kappa$;
    \item $F$ is homotopically trivial.
\end{enumerate}
\begin{proof}
Since $N$ satisfies the strong condition $\sigma(\min(n,m),\min(n,m),(\min(n,m)-1)\kappa)$, the assumptions of Theorem \ref{rigidity of contracting maps} hold. We now focus on proving the rigidity of curvatures for cases (i) and (ii).

By Lemma \ref{characterizing limiting map}, $(K_{N})_{ij}\equiv \kappa$ for every $i\neq j$ with respect to any SVD frame. Hence, $K_{N}\equiv \kappa$ when $m\geq n$, and $\kappa\equiv (K_{N})|_{M}\equiv K_{M}$ when $m\leq n$.
\end{proof}
\end{corollary}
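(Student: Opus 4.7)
The plan is to apply Theorem \ref{rigidity of contracting maps} with parameters $k=\ell=\min(n,m)$, $\phi=1$, $c=(\min(n,m)-1)\kappa$, and then squeeze the curvature rigidity out of Lemma \ref{characterizing limiting map}. First I would check that the hypotheses of Theorem \ref{rigidity of contracting maps} are met. Assumption (a) is precisely $\mathrm{Ric}_M\geq c$. For the strong condition $\sigma(\min(n,m),\min(n,m),c)$ on $N$, given any $\min(n,m)$ orthonormal vectors in $T_pN$ I would take $\tilde K_{ij}:=\kappa(1-\delta_{ij})$; by (b) this dominates $(K_N)_{ij}$, and it is Einstein of constant $(\min(n,m)-1)\kappa$ with second eigenvalue $-\kappa<0$, so Proposition \ref{condition sigma, sigma(k,l,c) for matrices}(iv) gives the strong condition. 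Finally, $|dF|^2\leq \min(n,m)$ is literally $\lambda_1^2+\cdots+\lambda_{\min(n,m)}^2\leq \phi\cdot k$.

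Theorem \ref{rigidity of contracting maps} then produces three alternatives that match cases (i), (ii), (iii) of the corollary \emph{except} for the curvature equalities $K_N\equiv\kappa$ and $K_M\equiv\kappa$. The proof of Theorem \ref{rigidity of contracting maps} shows that in the subimmersion cases one actually has $F=F_\infty$, a harmonic map. So I can apply Lemma \ref{characterizing limiting map} directly to $F$. In any SVD frame the $\min(n,m)$ nonzero singular values all equal $1$, so conclusion (iv) of that lemma gives $\tilde K_{ij}=(K_N)_{ij}$ for all $1\leq i,j\leq \min(n,m)$. In particular $(K_N)_{ij}=\kappa$ for every $i\neq j$ in the SVD frame.

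Upgrading this pointwise identity to a statement about all sectional curvatures is the last step. When $m\geq n$, the $n$ singular values coincide, so any orthonormal basis of $T_{F(p)}N$ arises from some SVD frame (SVD is only determined up to rotation within a coincident singular-value block); hence $K_N(v,w)=\kappa$ for every orthonormal pair, proving case (i). When $m\leq n$, the same coincidence argument shows $K_N\equiv\kappa$ on the $m$-dimensional subspace $dF(T_pM)\subset T_{F(p)}N$. Since $F$ is a totally geodesic isometric immersion, the Gauss equation degenerates (second fundamental form vanishes) to $K_M(u,v)=K_N(dF(u),dF(v))$ on orthonormal pairs in $T_pM$, giving $K_M\equiv\kappa$ and proving case (ii).

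The only mildly subtle step is the last one: explicitly noting that coincidence of singular values gives rotational freedom in the SVD frame, so that the identity $(K_N)_{ij}=\kappa$ in a fixed SVD frame propagates to all orthonormal pairs in the relevant subspace. Everything else follows mechanically from the machinery already built in Sections 3 and 4.
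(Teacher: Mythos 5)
Your proof is correct and follows essentially the same route as the paper's: apply Theorem \ref{rigidity of contracting maps} with $k=\ell=\min(n,m)$, $\phi=1$, $\tilde K_{ij}=\kappa(1-\delta_{ij})$, then use conclusion (iv) of Lemma \ref{characterizing limiting map} together with the rotational freedom in the SVD frame (all relevant singular values equal $1$) to upgrade $(K_N)_{ij}=\kappa$ to a statement about all sectional curvatures, with the Gauss equation handling the $m\leq n$ case. You merely spell out, more explicitly than the paper's terse ``with respect to any SVD frame,'' the reason the SVD-frame identity propagates to arbitrary orthonormal pairs; the underlying argument is the same.
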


Since spheres are simply-connected, we have:
\begin{corollary}\label{Main theorem:sphere}
For any smooth map $F:(S^m.g_{S^m})\to (S^n,g_{S^n})$ with $|dF|^2\leq \min(n,m)$, $F$ is either an isometry embedding into a big-circle or homotopically trivial.
\end{corollary}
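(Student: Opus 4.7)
The plan is to invoke Corollary~\ref{rigidity of contracting map between space forms} with $M=(S^m,g_{S^m})$, $N=(S^n,g_{S^n})$ and $\kappa=1$, whose hypotheses hold trivially: $\mathrm{Ric}_{S^m}=(m-1)g_{S^m}\geq (\min(n,m)-1)g_{S^m}$ and $K_{S^n}\equiv 1$. The energy bound $|dF|^2\leq \min(n,m)$ matches the corollary's hypothesis exactly, so $F$ falls into one of three alternatives: (i) a Riemannian fiber bundle, (ii) a totally geodesic isometric immersion, or (iii) a homotopically trivial map.

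Alternative (iii) gives the conclusion directly. For (ii), necessarily $m\leq n$; since $F$ is a totally geodesic isometric immersion from $S^m$, its image is a complete totally geodesic $m$-dimensional submanifold of $S^n$ and hence a great $m$-subsphere. As both $S^m$ and the great subsphere are simply connected round $m$-spheres of curvature $1$, $F$ must be a global isometric embedding onto a great subsphere, which is the ``isometry embedding into a big-circle'' alternative of the statement.

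The main obstacle is alternative (i) when $m>n$. The plan here is to apply O'Neill's curvature formula: for any horizontal lifts $X,Y$ of an orthonormal pair on the base,
\[
K_{S^n}(\pi_*X,\pi_*Y)=K_{S^m}(X,Y)+\tfrac{3}{4}|[X,Y]^V|^2.
\]
Both curvatures equal $1$, which forces $[X,Y]^V=0$, i.e.\ the horizontal distribution is integrable. Combined with $\nabla dF\equiv 0$ from Lemma~\ref{characterizing limiting map}, the fibers are totally geodesic great $(m-n)$-subspheres, and the two orthogonal integrable distributions would locally split $S^m$ as a Riemannian product $S^n\times S^{m-n}$, contradicting the irreducibility of the round metric on $S^m$. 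Thus case (i) cannot occur when $m>n\geq 2$; when $m=n$, the fiber bundle has zero-dimensional fibers, and as a local isometry between simply connected closed manifolds it must be a global isometry, again yielding the desired dichotomy.
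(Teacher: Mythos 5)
Your proof is correct, but the way you rule out case (i) for $m>n$ takes a genuinely different route from what the paper's framework most directly provides. You invoke O'Neill's horizontal curvature formula to force $[X,Y]^V=0$, combine it with $\nabla dF\equiv 0$ to make both the vertical and horizontal distributions integrable with totally geodesic leaves, and then contradict the irreducibility of the round sphere via the de Rham splitting. That works (and in fact $\nabla dF\equiv 0$ alone already makes $\ker dF$ a parallel distribution, so O'Neill is not even strictly needed), but it imports external machinery. Within the paper's own apparatus the exclusion of $m>n$ in case (i) is more immediate: Lemma~\ref{characterizing limiting map}(ii) gives $((\mathrm{Ric}_M)_{ii}-c)\lambda_i^2=0$ in the SVD frame, and in case (i) the map is a Riemannian submersion so $\lambda_1=\cdots=\lambda_n=1$, forcing $(\mathrm{Ric}_{S^m})_{ii}=c=n-1$; but $\mathrm{Ric}_{S^m}=m-1$, so $m=n$. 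Equivalently, since $\mathrm{Ric}_{S^m}=m-1>n-1=c$ everywhere when $m>n$, Theorem~\ref{first convergence result} applies directly and the flow converges to a constant. What the paper's route buys is uniformity with the rest of Section~5; what your route buys is a self-contained geometric contradiction that does not rely on the equality analysis of the evolution inequality. One small point to tighten: ``two orthogonal integrable distributions'' does not by itself yield a local Riemannian product --- you also need the leaves of both to be totally geodesic (which you do have, since $A=0$ from O'Neill and $\nabla dF\equiv 0$ makes the vertical leaves totally geodesic), so that hypothesis should be stated explicitly before invoking the splitting.
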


Theorem \ref{Einstein pinching} also follows from this theory directly:

\begin{proof}[Proof of Theorem \ref{Einstein pinching}]
By Corollary \ref{condition sigma for Einstein manifolds}, $N$ satisfies strong condition $\sigma(n,n,c)$, so the proof is done by Theorem \ref{rigidity of contracting maps}.
\end{proof}

For Einstein manifolds of positive sectional curvature, we have the following result:
\begin{corollary}
Let $(M^m,g)$ and $(N^n,h)$ be closed, connected manifolds. Suppose their curvatures satisfy the following conditions:
\begin{enumerate}[(a)]
\item $\mathrm{Ric}_M \geq \mathrm{Ric}_N$,
\item $N$ is an Einstein manifold of positive sectional curvature,
\item $(\sup K_N/\inf K_N)\leq r$ for some $r>2$.
\end{enumerate}
For a smooth map $F \colon M \to N$, if $|d F|^2 \leq \frac{n-2+r}{r-1}$, then $F$ is homotopically trivial.
\begin{proof}
By Proposition \ref{condition sigma for Einstein manifolds}, $N$ satisfies strong condition $\sigma(n,n,c;\frac{n-2+r}{nr-n})$, so the proof is done by Theorem \ref{rigidity of contracting maps}.
\end{proof}    
\end{corollary}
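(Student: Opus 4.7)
The plan is to show that this corollary is a direct application of the machinery developed earlier, namely Proposition~\ref{condition sigma for Einstein manifolds} combined with Theorem~\ref{rigidity of contracting maps}. The main task is to verify that the energy density bound $|dF|^2 \leq \frac{n-2+r}{r-1}$ is precisely the contracting condition $\sum_{i=1}^n \lambda_i^2 \leq \phi \cdot n$ for the appropriate choice of $\phi$.

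First, let $c>0$ denote the Einstein constant of $N$, so that $\mathrm{Ric}_N = c \cdot h$. By hypothesis~(a), for every unit tangent vector $v \in TM$ we have $\mathrm{Ric}_M(v) \geq \mathrm{Ric}_N(w) = c$ for any unit $w \in TN$, hence $\mathrm{Ric}_M \geq c$. Next, set
\[
    \phi := \frac{n-2+r}{nr-n}.
\]
A direct computation shows $\phi \cdot n = \frac{n(n-2+r)}{n(r-1)} = \frac{n-2+r}{r-1}$, so the assumption $|dF|^2 = \sum_{i=1}^n \lambda_i^2 \leq \frac{n-2+r}{r-1}$ is exactly $\lambda_1^2 + \cdots + \lambda_n^2 \leq \phi \cdot n$, i.e.\ the $n$-nonnegativity of $\phi \cdot g - F^*h$.

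By Proposition~\ref{condition sigma for Einstein manifolds}, since $N$ is Einstein with positive sectional curvature and pinching at most $r$, the target $N$ satisfies the strong condition $\sigma(n,n,c;\phi)$. Together with $\mathrm{Ric}_M \geq c$ and the contracting hypothesis on $F$, all hypotheses of Theorem~\ref{rigidity of contracting maps} are met with $k = \ell = n$. The theorem therefore yields one of three alternatives: (i) $\phi = 1$ and $F$ is a Riemannian fiber bundle; (ii) $\phi = 1$ and $F$ is a totally geodesic isometric immersion; or (iii) the harmonic map heat flow starting from $F$ converges smoothly to a constant map.

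The remaining step is to rule out the first two alternatives. Observe that $\phi < 1$ if and only if $n - 2 + r < nr - n$, i.e.\ $2(n-1) < r(n-1)$, i.e.\ $r > 2$, which holds by hypothesis~(c). Hence alternatives (i) and (ii) are impossible, forcing alternative (iii). Since $F$ is connected to a constant map through the harmonic map heat flow, $F$ is homotopically trivial. The argument is straightforward once the translation of the energy bound into the $\phi$-contracting condition is made; there is no substantial obstacle, as the heavy lifting has already been done in Proposition~\ref{condition sigma for Einstein manifolds} and Theorem~\ref{rigidity of contracting maps}.
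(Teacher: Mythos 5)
Your proof is correct and follows essentially the same route as the paper's own (very terse) proof. You supply the useful extra detail that the paper leaves implicit: translating the hypothesis $|dF|^2\leq \frac{n-2+r}{r-1}$ into the contracting condition $\lambda_1^2+\cdots+\lambda_n^2\leq \phi n$ with $\phi=\frac{n-2+r}{nr-n}$, and the observation that $r>2$ forces $\phi<1$, which rules out cases (i) and (ii) of Theorem~\ref{rigidity of contracting maps} and leaves only the conclusion that the heat flow carries $F$ to a constant map.
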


The general contracting properties and curvature pinching conditions also can be studied in this way:

\begin{proof}[Proof of Theorem \ref{k singular values, pinching}]
It's proved by Proposition \ref{condition sigma: comparison to A_J(r)} and Theorem \ref{rigidity of contracting maps}.
\end{proof}

\begin{proof}[Proof of Corollary \ref{Main theorem: complex projective spaces}]
It's proved by Corollary \ref{condition sigma for 1/4-pinching} and Theorem \ref{rigidity of contracting maps}.
\end{proof}

\section{Appendix}

\begin{proposition}\label{tensor maximum principle}
Let $M^m$ be a compact manifold with time-dependent metric $g(t)(t\in [0,T))$. Assume $h_{t}(t\in [0,T))$ is a time-dependent self-adjoint $(1,1)$-tensor on $M$ such that $h_{t}$ is $k$-nonnegative on $M\times \{0\}\cup \partial M\times [0,T)$ and that there are 
    \begin{enumerate}[(i)]
    \item a locally Lipschitz time-dependent bundle map $\phi:[0,T)\times \mathrm{Selfadj}(T^M)\to \mathrm{Selfadj}(T^M)$ such that $\sum_{i=1}^{k}g_{t}(\phi_t(\omega)(v_i),v_i)\geq 0$ if $\sum_{i=1}^{k}g_{t}(\phi_t(\omega)(v_i),v_i)=0$ for singular value decomposition $\{v_1,\ldots,v_m\}$ with singular values $\lambda_1\leq\cdots\leq \lambda_m$
    \item a vector field $X\in \mathfrak{X}(M)$ such that
    \[
        \frac{\partial}{\partial t}H=\Delta_{g(t)}H+\nabla_{X}H+\phi_t(H).
    \]
    \end{enumerate}
Then $h$ is $k$-nonnegative on $M\times [0,T)$.
\begin{proof}

First, observe that the set $\{t\in [0,T)\big|h_{s}\geq 0\ \forall 0\leq s\leq t\}$ is closed. If we can demonstrate its openness, we can conclude its connectivity. Therefore, our objective is to establish the existence of a small $\delta>0$ such that $h_{t}\geq 0$ for all $t\in [0,\delta)$. In this context, we can choose a space-time-uniform Lipschitz constant $K$ for $\phi$ (with respect to each metric $g(t)$)

For every $\epsilon>0$ and $\delta>0$, consider the $(1,1)$-tensor $\tilde{h}_{\delta,\epsilon}=\tilde{h}:=h+\epsilon(\delta+t)id$. Our strategy is to show that there exists a $\delta>0$ such that for every $\epsilon>0$, $\tilde{H}$ is $k$-nonnegative for all $t\in [0,\delta]$. Let $\delta$ be an undetermined positive constant.
Suppose not. Then there is an $\epsilon>0$ and a least $0<t_0\leq \delta$ such that $\tilde{h}_{t_0}$ is not $k$-positive at some $p_0\in M^{\circ}$. By continuity, $\tilde{h}_{t_0}$ is $k$-nonnegative, and
\[
    \sum_{i=1}^{k}g_{t_0}(h_{t_0}(v_i),v_i)=0,
\]
where $v_1,\ldots,v_k$ are the eigenvectors with smallest $k$ eigenvalues. By assumption, we have $\sum_{i=1}^{k}g_{t_0}(\phi_{t_0}(\tilde{h}_{t_0})(v_i),v_i)\geq 0$, and hence

    \begin{align*}
        \sum_{i=1}^{k}g_{t_0}(\phi_{t_0}(h_{t_0})(v_i),v_i)&\geq \sum_{i=1}^{k}g_{t_0}(\phi_{t_0}(\tilde{h}_{t_0})(v_i),v_i)-
        \sum_{i=1}^{k}|g_{t_0}(\phi_{t_0}(\tilde{h}_{t_0})(v_i),v_i)-g_{t_0}(\phi_{t_0}(h_{t_0})(v_i),v_i)|\\
        &\geq \sum_{i=1}^{k}g_{t_0}(\phi_{t_0}(\tilde{h}_{t_0})(v_i),v_i)-k|\phi_{t_0}(h_{t_0})-\phi_{t_0}(\tilde{h}_{t_0})|_{g(t_0)}\\
        &\geq -2kK\epsilon\delta.
    \end{align*}

On the other hand, consider SVD frame and the function
\[
    f(p,t):=\sum_{i=1}^{k}g_{t}(\tilde{h}_{t}(V_{i}),V_{i}).
\]
Then we have $f(p,t)>0$ for all $t<t_0$, and hence at $(p_0,t_0)$
    \begin{align*}
        0\geq \left(\frac{\partial }{\partial t}-\Delta\right)f&=k\epsilon+\sum_{i=1}^{k}g_{t_0}(\phi_{t_0}(h_{t_0})(v_i),v_i)\\
        &\geq k\epsilon-2kK\epsilon\delta=k\epsilon(1-2K\delta).
    \end{align*}
So, in this case, $\delta\geq \frac{1}{2K}$, and thus for every $\delta<\frac{1}{2M+2K}$, $\tilde{h}_{\delta,\epsilon}=h+\epsilon(\delta+t)g(t)$ is non-negative on $M\times [0,\delta]$. By taking $\epsilon\to 0^{+}$, we deduce that $h$ is non-negative on $M\times [0,\delta]$.
\end{proof}
\end{proposition}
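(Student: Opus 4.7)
The plan is a standard tensor maximum principle argument by connectedness, combined with a strict-positivity perturbation to rule out tangential touches of the zero set. By continuity, the set
\[
S := \{t \in [0,T) : h_s \text{ is } k\text{-nonnegative on } M \text{ for all } 0 \leq s \leq t\}
\]
is closed in $[0,T)$, contains $0$ by hypothesis, and is a sub-interval, so it suffices to establish short-time persistence: there exists $\delta > 0$ such that $h_t$ is $k$-nonnegative on $[0, \delta]$.

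For this, I would introduce $\tilde h_{\delta, \epsilon} := h + \epsilon(\delta + t) \mathrm{id}$ with $\delta$ to be chosen and $\epsilon > 0$ arbitrary, and show that for $\delta$ small enough independently of $\epsilon$, $\tilde h$ is $k$-nonnegative on $M \times [0,\delta]$; then letting $\epsilon \to 0^+$ gives the conclusion for $h$. Suppose not. Then there is a first time $t_0 \in (0, \delta]$ and an interior point $p_0$ where $k$-nonnegativity of $\tilde h_{t_0}$ just fails (boundary failure is excluded by hypothesis). By minimality, $\tilde h_{t_0}(p_0)$ is $k$-nonnegative with zero $k$-eigenvalue sum: if $v_1, \ldots, v_k$ denote eigenvectors for the $k$ smallest eigenvalues at $(p_0, t_0)$, then $\sum_{i=1}^k g_{t_0}(\tilde h_{t_0}(v_i), v_i) = 0$.

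The workhorse is a test function built by freezing the frame: extend $v_1, \ldots, v_k$ to local orthonormal vector fields $V_1, \ldots, V_k$ by parallel transport (with respect to $g(t_0)$) along radial geodesics from $p_0$, and set $f(p,t) := \sum_{i=1}^k g_t(\tilde h_t(V_i), V_i)$. Since the $V_i$ form a candidate orthonormal $k$-frame at nearby points, $f$ dominates the sum of the $k$ smallest eigenvalues of $\tilde h_t$ there; combined with the minimality of $t_0$ and the equality $f(p_0, t_0) = 0$, this exhibits $(p_0, t_0)$ as a local spacetime minimum of $f$. First covariant derivatives of $V_i$ vanish at $p_0$, so computing $(\partial_t - \Delta - \nabla_X) f$ at $(p_0, t_0)$ and using the PDE plus $\partial_t(\epsilon(\delta+t)) = \epsilon$ yields
\[
0 \;\geq\; k\epsilon + \sum_{i=1}^k g_{t_0}\!\bigl(\phi_{t_0}(h_{t_0})(v_i), v_i\bigr).
\]

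The last step closes the contradiction. The hypothesis on $\phi$ applies to $\tilde h_{t_0}$ (whose $k$-smallest eigensum vanishes at $p_0$), not directly to $h_{t_0}$; one converts between the two via the local Lipschitz constant $K$ of $\phi$:
\[
\sum_i g_{t_0}\!\bigl(\phi_{t_0}(h_{t_0})(v_i), v_i\bigr) \;\geq\; \sum_i g_{t_0}\!\bigl(\phi_{t_0}(\tilde h_{t_0})(v_i), v_i\bigr) - k \bigl|\phi_{t_0}(h_{t_0}) - \phi_{t_0}(\tilde h_{t_0})\bigr| \;\geq\; -\,2kK\epsilon\delta,
\]
giving $0 \geq k\epsilon(1 - 2K\delta)$, impossible once $\delta < 1/(2K)$. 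Such a $\delta$ depends only on the uniform Lipschitz bound $K$, not on $\epsilon$, so the perturbation argument closes. The main obstacle I expect is the justification that the parallel-transported frame $V_1, \ldots, V_k$ really makes $f$ a pointwise upper bound for the smallest-$k$-eigensum in a spacetime neighborhood of $(p_0, t_0)$ — this is the analogue of Hamilton's original device — and the careful bookkeeping that ensures the $\nabla V_i$-terms in $\Delta f$ vanish at $p_0$ so that the differential inequality takes the clean form above.
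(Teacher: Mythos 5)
Your proposal follows the same route as the paper's proof: the connectedness/short-time-persistence reduction, the strict perturbation $\tilde h = h + \epsilon(\delta+t)\mathrm{id}$ with $\delta$ to be chosen independently of $\epsilon$, the first-failure point $(p_0,t_0)$, the Hamilton-style test function $f = \sum_i g_t(\tilde h_t(V_i),V_i)$ built from an extension of the minimizing eigenframe, the $k\epsilon$ gain from $\partial_t\tilde h - \partial_t h$, and the Lipschitz transfer $\phi(h) \approx \phi(\tilde h)$ costing $2kK\epsilon\delta$ to force $\delta \geq 1/(2K)$. You are if anything slightly more careful than the paper's write-up in two spots (spelling out that the null-vector hypothesis applies to $\tilde h_{t_0}$ rather than to $h_{t_0}$, and noting that the $\nabla_X f$ term dies because $(p_0,t_0)$ is a spatial minimum), but the mechanism and the final bound are identical.
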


\bibliographystyle{siamplain}
\bibliography{main}

\end{document}